\newtheorem{thm}{Theorem}[section]
\newtheorem{cor}[thm]{Corollary}
\newtheorem{lem}[thm]{Lemma}
\theoremstyle{definition}
\theoremstyle{remark}
\newtheorem{rem}[thm]{Remark}
\numberwithin{equation}{section}
\begin{document}

\title[Neumann eigenvalues of the biharmonic operator on domains]{Neumann eigenvalues of the biharmonic operator on domains: geometric bounds and related results\\
}%
\author{Bruno Colbois}%
\address{Bruno Colbois, Universit\`e de Neuch\^atel, Institute de Math\'ematiques, Rue Emile Argand 11, 2000 Neuch\^atel, Switzerland}%
\email{bruno.colbois@unine.ch}%
\author{Luigi Provenzano}%
\address{Luigi Provenzano, Universit\`a degli Studi di Padova, Dipartimento di Matematica, Via Trieste 63, 35121 Padova, Italy}%
\email{luigi.provenzano@math.unipd.it}%
\thanks{}%
\subjclass[2010]{35P15; 35J30, 58J50}%
\keywords{Biharmonic operator, Neumann boundary conditions, Riemannian manifolds, eigenvalues, eigenvalue bounds, spectral geometry}

\date{}%
\begin{abstract}
We study an eigenvalue problem for the biharmonic operator with Neumann boundary conditions on domains of Riemannian manifolds. We discuss the weak formulation and the classical boundary conditions, and we describe a few properties of the eigenvalues. Moreover, we establish upper bounds compatible with the Weyl's law under a given lower bound on the Ricci curvature.
\end{abstract}
\maketitle




\section{Introduction}

Let $(M,g)$ be a complete $n$-dimensional smooth Riemannian manifold and let $\Omega\subset M$ be a bounded domain, i.e., a bounded connected open set, with boundary $\partial\Omega$. We consider the following Neumann eigenvalue problem for the biharmonic operator:
\begin{equation}\label{neumann_classic}
\begin{cases}
\Delta^2 u=\mu u, & {\rm in\ }\Omega,\\
\frac{\partial^2u}{\partial\nu^2}=0, & {\rm on\ }\partial \Omega,\\
{\rm div}_{\partial \Omega}\left(\nabla_{\nu}\nabla u\right)_{\partial\Omega}+\frac{\partial\Delta u}{\partial\nu}=0, & {\rm on\ }\partial \Omega.
\end{cases}
\end{equation}
in the unknowns $u$ (the eigenfunction) and $\mu$ (the eigenvalue). Here $\Delta u={\rm div}(\nabla u)$ is the Laplacian (or Laplace-Beltrami operator) of $u$ on $(M,g)$, $\Delta^2 u=\Delta(\Delta u)$, $\nu$ denotes the outer unit normal to $\partial \Omega$, $\frac{\partial^2u}{\partial\nu^2}=\langle\nabla_{\nu}\nabla u,\nu\rangle$ is the second normal derivative, ${\rm div}_{\partial\Omega}$ is the divergence on $\partial\Omega$ with respect to the induced metric, and $F_{\partial\Omega}$ denotes the projection of $F\in TM$ on $T{\partial\Omega}$.

We recall that in the case $M=\mathbb R^n$ with the Euclidean metric, problem \eqref{neumann_classic} is well-known and has increasingly gained attention in recent years. We refer to \cite{ferraresso,buosopalinuro,bcp,chasman,chasmancircular,chasman2,Lap1997,pleijel_plate_1,pleijel_plate_2,kalamata} for the eigenvalue problem and to \cite{verchota} for the corresponding boundary value problem. Problem \eqref{neumann_classic} in dimension two represents Kirchhoff's solution to the problem of describing the transverse vibrations of a thin elastic plate with free edges. We refer to \cite{bourlard, giroire, nadai, nazaret} for more details and for historical information.

We also note that the corresponding Dirichlet problem for the biharmonic operator, which for planar domains is related to the study of the transverse vibrations of a thin elastic plate with clamped edges \cite{cohil}, has been extensively studied not only in the Euclidean setting, but also for domains in Riemannian manifolds, see e.g., \cite{cheng_clamped,wang_clamped,xia_clamped}. In particular, the Dirichlet problem on Euclidean domains and the analogous problem on Riemannian manifolds share many properties which can be derived by using similar arguments.

On the other hand, we have not been able to find the analogue of the biharmonic Neumann problem on Riemannian manifolds in the literature. The first aim of the present paper is to introduce problem \eqref{neumann_classic} on domains of Riemannian manifolds in a suitable way, derive the boundary conditions as well as the variational formulation. The problem which we obtain turns out to be the genuine generalization of the biharmonic Neumann problem for Euclidean domains. We remark that the standard technique used to derive the boundary conditions and the variational formulation of problem \eqref{neumann_classic}  in the Euclidean case it to multiply the eigenvalue equation $\Delta^2u=\mu u$ by a test function $\phi\in C^{\infty}(\Omega)$, integrate  the resulting equality over $\Omega$ and perform suitable integrations by parts. Computations become easy since we can exchange the order of partial derivatives. This is no more possible in the case of Riemannian manifolds, hence we have to take a longer path, described in Subsection \ref{sub_21}. An essential tool is Reilly's identity. It turns out that the strategy described in Subsection \ref{sub_21} allows to define alternatively problem \eqref{neumann_classic} also in the Euclidean case. 


Then, we prove that problem \eqref{neumann_classic} admits an increasing sequence of eigenvalues of finite multiplicity diverging to $+\infty$ of the form
$$
-\infty<\mu_1\leq\mu_2\leq\cdots\leq\mu_{j}\leq\cdots\nearrow+\infty.
$$
If $\Omega$ is a bounded domain of $\mathbb R^n$ it is known that the eigenvalues are non-negative and satisfy the Weyl's asymptotic law
$$
\lim_{j\rightarrow+\infty}\frac{\mu_j}{j^{\frac{4}{n}}}=\frac{16\pi^4}{(\omega_n|\Omega|)^{\frac{4}{n}}},
$$
that is
\begin{equation}\label{weyl}
\mu_j\sim\frac{16\pi^4}{\omega_n^{\frac{4}{n}}}\left(\frac{j}{|\Omega|}\right)^{\frac{4}{n}}\,,\ \ \ {\rm as\ }j\rightarrow+\infty,
\end{equation}
where $\omega_n$ denotes the volume of the unit ball in $\mathbb R^n$ and $|\Omega|$ denotes the Lebesgue measure of $\Omega$, see e.g., \cite{Lap1997}.

An important question regarding the eigenvalues of  Neumann-type problems is that of finding upper bounds which are compatible with the Weyl's law. One of the main purposes of the present paper is that of finding upper bounds for $\mu_j$ which can be compared with \eqref{weyl} and which contain the correct geometric information.

In the case of Euclidean domains, Weyl-type upper bounds for $\mu_j$ are well-known and are of the form
\begin{equation}\label{laptev}
\mu_j\leq A_n\left(\frac{j}{|\Omega|}\right)^{\frac{4}{n}}
\end{equation}
with $A_n=\left(\frac{4+n}{4}\right)^{4/n}\frac{16\pi^4}{\omega_n^{4/n}}$,  see e.g., \cite{Lap1997}.  The proof in \cite{Lap1997} is in the spirit of the analogous result of Kr\"oger for the Neumann eigenvalues $m_j$ of the Laplacian on Euclidean domains, namely
\begin{equation}\label{kroger}
m_j\leq B_n\left(\frac{j}{|\Omega|}\right)^{\frac{2}{n}},
\end{equation}
where $B_n=\left(\frac{2+n}{2}\right)^{2/n}\frac{4\pi^2}{\omega_n^{2/n}}$, see \cite{Kro}. Note that \eqref{kroger} is compatible with the Weyl's law
\begin{equation}\label{weyl2}
m_j\sim\frac{4\pi^2}{\omega_n^{\frac{2}{n}}}\left(\frac{j}{|\Omega|}\right)^{\frac{2}{n}}\,,\ \ \ {\rm as\ }j\rightarrow+\infty,
\end{equation}
The proofs of \eqref{laptev} and \eqref{kroger} rely on harmonic analysis techniques and are hardly adaptable to the case of manifolds.

In the case of the eigenvalues of the Laplacian on manifolds, one of the first result in this direction is presented in \cite{buser}, where it is proved that
\begin{equation}\label{buser}
m_j\leq\frac{(n-1)^2}{4}\kappa^2+C_n\left(\frac{j}{|M|}\right)^{\frac{2}{n}}.
\end{equation}
Here $m_j$ denote, with abuse of notation, the eigenvalues of the Laplacian on a compact manifold (without boundary) $M$ with ${\rm Ric}\geq -(n-1)\kappa^2$, $\kappa\geq 0$. Results on domains have been obtained more recently. In \cite{colbois_maerten} the authors prove the following upper bound
\begin{equation}\label{colbois_maerten}
m_j\leq A_n\left(\frac{j}{|\Omega|}\right)^{\frac{2}{n}}+B_n\kappa^2,
\end{equation}
for the Neumann eigenvalues $m_j$ of the Laplacian on a domain $\Omega$ of a complete Riemannian manifold with ${\rm Ric}\geq-(n-1)\kappa^2$, $\kappa\geq 0$. The authors adopt a metric approach for the proof of \eqref{colbois_maerten}. 

We will use this approach also in the present paper in order to obtain upper bounds for $\mu_j$. In view of \eqref{weyl}, \eqref{weyl2} and \eqref{colbois_maerten}, it is natural to conjecture that the inequality
\begin{equation}\label{conjecture0}
\mu_j\leq A_n\left(\frac{j}{|\Omega|}\right)^{\frac{4}{n}}+B_n\kappa^4
\end{equation}
holds for any bounded domain of a complete Riemannian manifold with ${\rm Ric}\geq-(n-1)\kappa^2$, $\kappa\geq 0$. We are able to prove \eqref{conjecture0} for certain classes of domains and manifolds. In particular we prove \eqref{conjecture0} for domains of manifolds with non-negative Ricci curvature and $n=2,3,4$ (see Theorem \ref{low_dim_positive}), for domains of the standard sphere (see Theorem \ref{thm_sphere}) and of the standard hyperbolic space (see Theorem \ref{main_hyp}), for boundaryless manifolds (see Corollary \ref{cor_boundaryless}), and for convex domains (see Corollary \ref{cor_convex}). 

In the general case, we are able to prove an estimate of the form
\begin{equation}\label{conjecture1}
\mu_j \leq A_n\left(\frac{j}{|\Omega|}\right)^{\frac{4}{n}}+C(g)
\end{equation}
(see Theorem \ref{main_general}), where $C(g)$ has an explicit form and depends on $\kappa,r_{inj,\Omega},|\partial\Omega|$, where $r_{inj,\Omega}$ is the injectivity radius relative to $\Omega$ (see \eqref{injO} for the definition) and $|\partial\Omega|$ is the $n-1$-dimensional Hausdorff measure of the boundary. Estimate \eqref{conjecture1} is improved if we put additional hypothesis on $\Omega$ and $M$. In particular we provide more refined estimates in the case of domains with sufficiently small diameter in  manifolds with non-negative Ricci curvature (see Theorem \ref{thm_ric_pos_dist}) and in the case of Cartan-Hadamard manifolds (see Theorem \ref{main_cartan}).

It is important to remark that a bound of the form \eqref{conjecture1} is good in the sense that the geometry of the domain and of the manifold appears as an {\it additive} constant in front of the term encoding the asymptotic behavior, which has the correct form compared with the asymptotic law \eqref{weyl}.

As already mentioned, in order to prove the upper bounds, we adopt a metric approach. In particular, we exploit a result of decomposition of a metric measure space by disjoint {\it capacitors}, see \cite{gny,asma_conf} for more details, see also \cite{colboisgirouard,colbois_maerten}. Namely, given a domain $\Omega$, we find, for each $j\in\mathbb N$, a family of $j$ disjointly supported sets $A_i$, $i=1,...,j$, in $\Omega$ with sufficient volume. Associated to each set $A_i$ we build a function $u_i$ to test in the min-max formula for the eigenvalues (see formula \eqref{minmax}). Since the $u_i$ are disjointly supported, from \eqref{minmax} we deduce that it is sufficient to bound the Rayleigh quotient of each $u_i$ in order to upper bound $\mu_j$. Hence the functions $u_i$ have to be constructed in a proper way.

We remark that test functions for the biharmonic operator need to belong to the standard Sobolev space $H^2(\Omega)$. Usually, test functions are built in terms of distance-type functions, which are Lipschitz, but are not in general $H^2(\Omega)$ functions. In particular, test functions for the Laplacian eigenvalues are cut-off functions which are just Lipschitz regular. The application of the technique used in \cite{colbois_maerten} for the Laplacian is no straightforward in our situation, in fact it is notoriously a difficult task to build cut-off functions enjoying precise estimates for first and second derivatives, see e.g., \cite{bianchi_setti,cheeger_colding,guneysu}. We pay the price of the fact that we need cut-off functions in $H^2(\Omega)$ with well-behaved gradient and Laplacian by introducing into the estimates the quantities $r_{inj,\Omega}$ and $|\partial\Omega|$. Getting rid of these quantities in the general case seems a very difficult issue.

Looking at \eqref{weyl} and \eqref{weyl2}, one may wonder if there is some sort of relationship between $\mu_j$ and $m_j^2$ and if it is possible, in general, to recover upper estimates for $\mu_j$ from upper estimates on $m_j$. The answer is negative in general, in fact we provide examples showing that the ratio $\frac{\mu_j}{m_j^2}$ may be made arbitrarily large or arbitrarily close to zero.

Another interesting feature of problem \eqref{neumann_classic} is that it is possible to produce {\it negative} eigenvalues. This does not happen with the eigenvalues of the biharmonic Dirichlet problem on domains of manifolds. In particular, in subsection \ref{neg_eig} we prove that any domain of the standard hyperbolic space $\mathbb H^n$ admits at least $n$ negative eigenvalues. Moreover, we prove that there exist domains with an arbitrarily large number of negative eigenvalues, the absolute value of which can be made arbitrarily large. On the other hand, for domains in manifolds with positive Ricci curvature we prove a lower bound for the eigenvalues $\mu_j$ in terms of $m_j$, $\eta_j$ and a lower bound on the Ricci curvature (see Theorem \ref{lower_pos}), where $\eta_j$ denote the eigenvalues of the rough Laplacian on $\Omega$.

The present paper is organized as follows. In Section \ref{pre} we recall some preliminaries and introduce the notation. In Section \ref{sec_2} we describe the classical Neumann boundary conditions in \eqref{neumann_classic} and derive the weak formulation of the problem, proving that it is well-posed and characterizing its spectrum. In Section \ref{afew} we discuss a few properties of the eigenvalues. In particular we provide examples where the ratio $\frac{\mu_j}{m_j^2}$ can be made arbitrarily large or close to zero. We prove that any domain of the hyperbolic space admits at least $n$ negative eigenvalues, and that there exists domains with an arbitrarily large number of negative eigenvalues with  arbitrarily large absolute value. We also prove a lower bound for $\mu_j$ for domains on manifolds with positive Ricci curvature. In Section \ref{bounds} we recall the main technical results of decomposition of a metric measure space by capacitors, which allow to prove the upper estimates for the eigenvalues $\mu_j$ presented in the same section.




\section{Preliminaries and notation}\label{pre}

Let $(M,g)$ be a complete $n$-dimensional smooth Riemannian manifold. For a bounded domain $\Omega$ in $M$, by $L^2(\Omega)$ we denote the space of measurable functions $f$ on $\Omega$ such that $\int_{\Omega}u^2dv<\infty$.

The Sobolev space $H^2(\Omega)$ is the completion of $C^{\infty}(\Omega)$ with respect to the norm 
\begin{equation}\label{norm-0}
\|f\|_{H^2(\Omega)}:=\left(\int_{\Omega}\left(|D^2f|^2+|\nabla f|^2+f^2\right)dv\right)^{\frac{1}{2}}.
\end{equation} 
The space $L^2(\Omega)$ is a Hilbert space when endowed with the standard scalar product
\begin{equation}\label{prod-00}
\langle f,g\rangle_{L^2(\Omega)}:=\int_{\Omega}fgdv.
\end{equation}
The space $H^2(\Omega)$ is a Hilbert space when endowed with the standard scalar product
\begin{equation}\label{prod-0}
\langle f,g\rangle_{H^2(\Omega)}:=\int_{\Omega}\left(\langle D^2f,D^2g\rangle+\langle\nabla f,\nabla g\rangle+fg\right)dv,
\end{equation} 
which induces the norm \eqref{norm-0}. 

The space $H^2_0(\Omega)$ is the completion of $C^{\infty}_c(\Omega)$ with respect to \eqref{norm-0}, there $C^{\infty}_c(\Omega)$ is the space of functions in $C^{\infty}(\Omega)$ compactly supported in $\Omega$. We refer to \cite{hebey} for an introduction to Sobolev spaces on Riemannian manifolds.

We denote here by $dv$ the Riemannian volume element of $M$ and by $d\sigma$ the induced $n-1$-dimensional volume element of $\partial \Omega$.
\medskip

Through all the paper, we denote by $\langle\cdot,\cdot\rangle$ the inner product on the tangent spaces of $M$ associated with the metric $g$, and, with abuse of notation, we shall denote by $\langle\cdot,\cdot\rangle$ also the induced metric on $\partial \Omega$. Let $\nabla$, $D^2$ and $\Delta$ denote the gradient, the Hessian and the Laplacian on $M$, respectively, and let $\nabla_{\partial \Omega}$, ${\rm div}_{\partial \Omega}$ and $\Delta_{\partial \Omega}$ denote the gradient, the divergence and the Laplacian on $\partial \Omega$ with respect to the induced metric, respectively. We denote by $\nu$ the outer unit normal to $\partial \Omega$. The {\it shape operator} of $\partial \Omega$, denoted by $S$, is defined for any $X\in T\partial \Omega$ as $S(X):=\nabla_X \nu$, where $\nabla_X \nu$ is the covariant derivative of $\nu$ along a vector field $X$. The {\it second fundamental form} of $\partial \Omega$, denoted by $II(X,Y)$, is defined as $II(X,Y):=\langle S(X),Y\rangle$ for all $X,Y\in T\partial \Omega$. We recall that the eigenvalues of $S$ are the principal curvatures of $\partial \Omega$.  We will denote by $\mathcal H:=\frac{1}{n-1}{\rm tr} S=\frac{1}{n-1}{\rm div}\nu_{|_{\partial \Omega}}$ the mean curvature of $\partial \Omega$. Let ${\rm Ric}(\cdot,\cdot)$ denote the Ricci tensor of $M$. Finally, for an open set $E\in M$ we denote by $|E|$ the standard Lebesgue measure of $E$. For a closed set $G\in M$ of finite $n-1$-dimensional Hausdorff measure, we still denote by $|G|$ the $n-1$-dimensional Hausdorff measure of $G$. 

We recall Bochner's formula:
\begin{equation}\label{bochner_f}
|D^2f|^2+{\rm Ric}(\nabla f,\nabla f)=\frac{1}{2}\Delta\left(|\nabla f|^2\right)-\langle\nabla\Delta f,\nabla f\rangle,
\end{equation}
holding pointwise for smooth functions $f$ on $\Omega$.

It is also useful to recall Reilly's formula, see \cite{reilly}:
\begin{multline}\label{reilly}
\int_{\Omega} (\Delta f)^2 dv-\int_{\Omega}|D^2f|^2+{\rm Ric}(\nabla f,\nabla f) dv\\
=\int_{\partial \Omega}(n-1)\mathcal H\left(\frac{\partial f}{\partial\nu}\right)^2+2\Delta_{\partial \Omega}f\frac{\partial f}{\partial\nu}+II(\nabla_{\partial \Omega}f,\nabla_{\partial \Omega}f) d\sigma,
\end{multline}
holding for smooth functions $f$ on $\Omega$.

We also recall Green's identity for smooth functions $f,g$  :
\begin{equation}\label{green}
\int_{\Omega} \left(\Delta f g-f\Delta g \right) dv=\int_{\partial \Omega}\left(\frac{\partial f}{\partial\nu}g-f\frac{\partial g}{\partial\nu}\right)d\sigma.
\end{equation}

We recall that for any smooth vector field $F$ on $T\partial\Omega$ and any function $f$ defined on $\partial\Omega$,
\begin{equation}\label{div}
\int_{\partial\Omega}\langle F,\nabla_{\partial\Omega} f\rangle d\sigma=\int_{\partial\Omega}{\rm div}_{\partial\Omega}F fd\sigma,
\end{equation}
that is, the divergence operator is the adjoint of the gradient. In particular, \eqref{div} holds with $\partial\Omega$ replaced by any complete smooth (boundaryless) Riemannian manifold $(M,g)$, and ${\rm div}_{\partial\Omega}$, $\nabla_{\partial\Omega}$ replaced by the divergence and gradient on $M$, respectively.
\medskip

Finally, by $\mathbb N$ we denote the set of positive natural numbers.




\section{The eigenvalue problem for the biharmonic operator with Neumann boundary conditions}\label{sec_2}

In this section we  describe the classical Neumann boundary conditions in \eqref{neumann_classic}, as well as the weak formulation of the problem. This is done in Subsection \ref{sub_21}.

Then we prove that problem \eqref{variational_0} is well-posed under suitable assumptions on the domain, and admits an increasing sequence of eigenvalues of finite multiplicity bounded from below and diverging to $+\infty$. This is done in Subsection \ref{sub_22}.




\subsection{Classical Neumann boundary conditions and weak formulation}\label{sub_21}

We consider the following variational problem:

\begin{equation}\label{variational_0}
\int_M \langle D^2u,D^2\phi\rangle+{\rm Ric}(\nabla u,\nabla\phi)dv=\mu\int_M u\phi dv\,,\ \ \ \forall\phi\in H^2(\Omega),
\end{equation}
in the unknowns $u\in H^2(\Omega)$ and $\mu\in\mathbb R$. Problem \eqref{variational_0} is the variational (weak) formulation of problem \eqref{neumann_classic}, as stated in the following theorem. We remark that it is not straightforward to recognize the left-hand side of \eqref{variational_0} as the right quadratic form for an eigenvalue problem for the biharmonic operator with Neumann boundary conditions. One would like to take the simpler quadratic form $\int_{\Omega}\Delta u\Delta\phi dv$, which however provides an ill-defined problem, see Remark \ref{rem33}.

\begin{thm}\label{equivalence}
Let $(M,g)$ be a complete $n$-dimensional smooth Riemannian manifold and let $\Omega$ be a smooth ($C^{\infty}$) bounded domain in $M$. Given a solution $(u,\mu)$ of problem \eqref{variational_0} such that $u\in C^4(\Omega)\cap C^3(\overline\Omega)$, then $(u,\mu)$ solves problem \eqref{neumann_classic}. Vice versa, any solution $(u,\mu)$ of problem \eqref{neumann_classic} is a solution of problem \eqref{variational_0}.
\end{thm}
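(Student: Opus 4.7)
The plan is to prove both implications through a single master identity that connects the weak bilinear form with the classical operator and its boundary data. The bridge is furnished by combining the polarized version of Reilly's formula \eqref{reilly} with two applications of Green's identity \eqref{green}.

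\textbf{Step 1: Derive the master identity.} Applying \eqref{reilly} to $u+\phi$ and subtracting the formulas for $u$ and $\phi$ yields the polarized Reilly identity
\begin{equation*}
\int_\Omega \Delta u\,\Delta\phi\,dv - \int_\Omega\!\bigl[\langle D^2u,D^2\phi\rangle+\mathrm{Ric}(\nabla u,\nabla\phi)\bigr]dv = \int_{\partial\Omega}\mathcal{B}(u,\phi)\,d\sigma,
\end{equation*}
with $\mathcal{B}(u,\phi)=(n-1)\mathcal{H}\tfrac{\partial u}{\partial\nu}\tfrac{\partial\phi}{\partial\nu}+\Delta_{\partial\Omega}u\tfrac{\partial\phi}{\partial\nu}+\Delta_{\partial\Omega}\phi\tfrac{\partial u}{\partial\nu}+II(\nabla_{\partial\Omega}u,\nabla_{\partial\Omega}\phi)$. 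Applying \eqref{green} with $f=\Delta u$ and $g=\phi$ gives
\begin{equation*}
\int_\Omega \Delta u\,\Delta\phi\,dv=\int_\Omega \Delta^2u\cdot\phi\,dv+\int_{\partial\Omega}\Bigl(\Delta u\,\tfrac{\partial\phi}{\partial\nu}-\tfrac{\partial\Delta u}{\partial\nu}\,\phi\Bigr)d\sigma.
\end{equation*}
Combining and subtracting, the master identity reads
\begin{equation*}
\int_\Omega\!\bigl[\langle D^2u,D^2\phi\rangle+\mathrm{Ric}(\nabla u,\nabla\phi)\bigr]dv-\int_\Omega \Delta^2u\cdot\phi\,dv=\int_{\partial\Omega}\!\bigl[\Delta u\,\tfrac{\partial\phi}{\partial\nu}-\tfrac{\partial\Delta u}{\partial\nu}\phi-\mathcal{B}(u,\phi)\bigr]d\sigma.
\end{equation*}

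\textbf{Step 2: Simplify the boundary integrand.} I would collect the coefficient of $\tfrac{\partial\phi}{\partial\nu}$, which equals $\Delta u-(n-1)\mathcal{H}\tfrac{\partial u}{\partial\nu}-\Delta_{\partial\Omega}u$. The classical decomposition of the Laplacian near a hypersurface,
\begin{equation*}
\Delta u=\Delta_{\partial\Omega}u+\tfrac{\partial^2 u}{\partial\nu^2}+(n-1)\mathcal{H}\tfrac{\partial u}{\partial\nu},
\end{equation*}
identifies this coefficient as $\tfrac{\partial^2 u}{\partial\nu^2}$. For the remaining terms I would integrate by parts on the closed manifold $\partial\Omega$ using \eqref{div}, so that $\int_{\partial\Omega}\Delta_{\partial\Omega}\phi\tfrac{\partial u}{\partial\nu}d\sigma=\int_{\partial\Omega}\phi\,\Delta_{\partial\Omega}\tfrac{\partial u}{\partial\nu}\,d\sigma$ and $\int_{\partial\Omega}II(\nabla_{\partial\Omega}u,\nabla_{\partial\Omega}\phi)d\sigma=-\int_{\partial\Omega}\phi\,\mathrm{div}_{\partial\Omega}(S(\nabla_{\partial\Omega}u))d\sigma$. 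Using the tangent/normal splitting $(\nabla_\nu\nabla u)_{\partial\Omega}=\nabla_{\partial\Omega}\bigl(\tfrac{\partial u}{\partial\nu}\bigr)-S(\nabla_{\partial\Omega}u)$ (obtained from $D^2u(\nu,X)=X(\tfrac{\partial u}{\partial\nu})-\langle\nabla u,\nabla_X\nu\rangle$ for $X\in T\partial\Omega$), these two contributions combine to $-\int_{\partial\Omega}\phi\,\mathrm{div}_{\partial\Omega}((\nabla_\nu\nabla u)_{\partial\Omega})d\sigma$. After all simplifications, the master identity takes the clean form
\begin{equation*}
\int_\Omega\!\bigl[\langle D^2u,D^2\phi\rangle+\mathrm{Ric}(\nabla u,\nabla\phi)\bigr]dv-\int_\Omega \Delta^2u\cdot\phi\,dv=\int_{\partial\Omega}\tfrac{\partial^2 u}{\partial\nu^2}\tfrac{\partial\phi}{\partial\nu}\,d\sigma-\int_{\partial\Omega}\bigl[\mathrm{div}_{\partial\Omega}(\nabla_\nu\nabla u)_{\partial\Omega}+\tfrac{\partial\Delta u}{\partial\nu}\bigr]\phi\,d\sigma.
\end{equation*}

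\textbf{Step 3: Conclude both directions.} For the implication classical $\Rightarrow$ weak, I would substitute $\Delta^2u=\mu u$ in $\Omega$ and use the two boundary conditions of \eqref{neumann_classic}, so that both boundary integrals vanish and \eqref{variational_0} follows for $\phi\in C^\infty(\overline\Omega)$; density of smooth functions in $H^2(\Omega)$ and continuity of the bilinear form then extend the identity to arbitrary $\phi\in H^2(\Omega)$. For the converse, starting from \eqref{variational_0} and plugging into the master identity, I would first choose $\phi\in C^\infty_c(\Omega)$ to localize and deduce $\Delta^2u=\mu u$ pointwise in $\Omega$, leaving only the boundary integrals. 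I would then vary $\phi$ freely: choosing $\phi$ with $\phi|_{\partial\Omega}=0$ and arbitrary $\tfrac{\partial\phi}{\partial\nu}|_{\partial\Omega}$ forces $\tfrac{\partial^2u}{\partial\nu^2}=0$ on $\partial\Omega$, while choosing $\phi$ with $\tfrac{\partial\phi}{\partial\nu}|_{\partial\Omega}=0$ and arbitrary $\phi|_{\partial\Omega}$ yields the second Neumann condition. The existence of such $\phi$ with independently prescribable Dirichlet and Neumann data is standard for a $C^\infty$ domain.

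\textbf{Main obstacle.} The step that requires the most care is the algebraic simplification in Step 2: correctly identifying $\tfrac{\partial^2u}{\partial\nu^2}$ as the coefficient of $\tfrac{\partial\phi}{\partial\nu}$ via the tangential decomposition of $\Delta u$, and assembling the tangential divergence of $(\nabla_\nu\nabla u)_{\partial\Omega}$ from the two distinct boundary contributions involving $\Delta_{\partial\Omega}u$ and the second fundamental form. This is precisely the place where working on a Riemannian manifold (where partial derivatives do not commute) differs from the Euclidean calculation and where Reilly's formula becomes indispensable.
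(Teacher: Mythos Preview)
Your proof is correct and follows essentially the same route as the paper: polarize Reilly's formula, combine with Green's identity, and integrate by parts on $\partial\Omega$ to isolate the coefficients of $\phi$ and $\tfrac{\partial\phi}{\partial\nu}$. The only difference is organizational: the paper first arrives at the intermediate boundary conditions $(n-1)\mathcal H\tfrac{\partial u}{\partial\nu}+\Delta_{\partial\Omega}u-\Delta u=0$ and $\Delta_{\partial\Omega}\bigl(\tfrac{\partial u}{\partial\nu}\bigr)-\mathrm{div}_{\partial\Omega}S(\nabla_{\partial\Omega}u)+\tfrac{\partial\Delta u}{\partial\nu}=0$, and then proves separately (Lemma~\ref{equivalence_bc}) that these coincide with those of \eqref{neumann_classic}, whereas you fold the content of that lemma---the identities $\Delta u=\Delta_{\partial\Omega}u+\tfrac{\partial^2u}{\partial\nu^2}+(n-1)\mathcal H\tfrac{\partial u}{\partial\nu}$ and $(\nabla_\nu\nabla u)_{\partial\Omega}=\nabla_{\partial\Omega}\bigl(\tfrac{\partial u}{\partial\nu}\bigr)-S(\nabla_{\partial\Omega}u)$---directly into Step~2, arriving at the clean master identity in one pass.
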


Actually, we will prove that \eqref{variational_0} is the weak formulation of the following  eigenvalue problem:
\begin{equation}\label{neumann_classic_2}
\begin{cases}
\Delta^2 u=\mu u, & {\rm in\ }\Omega,\\
(n-1)\mathcal H\frac{\partial u}{\partial\nu}+\Delta_{\partial \Omega}u -\Delta u=0, & {\rm on\ }\partial \Omega,\\
\Delta_{\partial \Omega}\left(\frac{\partial u}{\partial\nu}\right)-{\rm div}_{\partial \Omega} S(\nabla_{\partial \Omega}u)+\frac{\partial\Delta u}{\partial\nu}=0, & {\rm on\ }\partial \Omega,
\end{cases}
\end{equation}
in the unknowns $u$ (the eigenfunction) and $\mu$ (the eigenvalue). Then, we will show that the two boundary conditions in \eqref{neumann_classic_2} coincide with those of \eqref{neumann_classic}, namely we  will prove the following lemma.

\begin{lem}\label{equivalence_bc}
Let $(M,g)$ be a complete $n$-dimensional smooth Riemannian manifold and let $\Omega$ be a smooth bounded domain in $M$. Then, for any $u\in C^3(\overline\Omega)$
\begin{equation}\label{bc1}
(n-1)\mathcal H\frac{\partial u}{\partial\nu}+\Delta_{\partial \Omega}u -\Delta u=-\frac{\partial^2 u}{\partial\nu^2}
\end{equation}
and
\begin{equation}\label{bc2}
\Delta_{\partial \Omega}\left(\frac{\partial u}{\partial\nu}\right)-{\rm div}_{\partial \Omega} S(\nabla_{\partial \Omega}u)+\frac{\partial\Delta u}{\partial\nu}={\rm div}_{\partial \Omega}\left(\nabla_{\nu}\nabla u\right)_{\partial\Omega}+\frac{\partial\Delta u}{\partial\nu}.
\end{equation}
\end{lem}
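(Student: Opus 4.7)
The plan is to prove the two identities separately, working in a local orthonormal frame $(e_1,\dots,e_{n-1},\nu)$ adapted to $\partial\Omega$ and relying on the Gauss formula $\nabla_X Y = \nabla^{\partial\Omega}_X Y - II(X,Y)\nu$ for $X,Y\in T\partial\Omega$ (which follows from differentiating $\langle Y,\nu\rangle=0$ in the $X$ direction). The key algebraic fact underlying both computations is the decomposition of the ambient Hessian of $u$ at $\partial\Omega$ into its tangential--tangential, tangential--normal and normal--normal parts, all three of which can be expressed in terms of intrinsic objects on $\partial\Omega$, the shape operator $S$, and the normal derivatives of $u$.

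For \eqref{bc1}, I would start from $\Delta u=\sum_{i=1}^{n-1}D^2u(e_i,e_i)+D^2u(\nu,\nu)$, recognise the last term as $\partial^2 u/\partial\nu^2$, and use Gauss' formula together with $D^2u(X,Y)=X(Yu)-(\nabla_X Y)(u)$ to obtain
$$D^2u(X,Y)=D^2_{\partial\Omega}(u_{|\partial\Omega})(X,Y)+II(X,Y)\,\frac{\partial u}{\partial\nu}$$
for all $X,Y\in T\partial\Omega$. Summing over $e_1,\dots,e_{n-1}$ and using $\mathrm{tr}\,S=(n-1)\mathcal H$ gives $\sum_{i=1}^{n-1}D^2u(e_i,e_i)=\Delta_{\partial\Omega}u+(n-1)\mathcal H\,\partial u/\partial\nu$, and rearranging produces \eqref{bc1}.

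For \eqref{bc2}, the term $\partial\Delta u/\partial\nu$ appears on both sides, so it suffices to prove the pointwise vector identity
$$(\nabla_\nu\nabla u)_{\partial\Omega}=\nabla_{\partial\Omega}\!\left(\frac{\partial u}{\partial\nu}\right)-S(\nabla_{\partial\Omega}u)$$
on $\partial\Omega$ and then apply $\mathrm{div}_{\partial\Omega}$ to both sides. I would prove the vector identity by contracting with an arbitrary $X\in T\partial\Omega$ and exploiting the symmetry of $D^2u$:
$$\langle\nabla_\nu\nabla u,X\rangle=D^2u(\nu,X)=D^2u(X,\nu)=X\!\left(\frac{\partial u}{\partial\nu}\right)-\langle S(X),\nabla u\rangle.$$
Because $S(X)\in T\partial\Omega$, only the tangential component $\nabla_{\partial\Omega}u$ of $\nabla u$ contributes to the inner product, and the self-adjointness of $S$ converts $\langle S(X),\nabla_{\partial\Omega}u\rangle$ into $\langle X,S(\nabla_{\partial\Omega}u)\rangle$; the first term, for its part, equals $\langle X,\nabla_{\partial\Omega}(\partial u/\partial\nu)\rangle$ since $X$ is tangent and $\partial u/\partial\nu$ is a function on $\partial\Omega$. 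Reading off the vector identity and applying $\mathrm{div}_{\partial\Omega}$ then gives \eqref{bc2}.

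I do not expect a serious obstacle here: both identities amount to Gauss-formula bookkeeping of the same kind used in deriving Reilly's identity \eqref{reilly}. The only care needed is with the sign conventions for $S$ and $II$ relative to the outer normal $\nu$, and with keeping track of the normal component $(\partial u/\partial\nu)\nu$ of $\nabla u$, since $\nabla u$ is not in general tangent to $\partial\Omega$.
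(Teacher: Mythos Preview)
Your proposal is correct and follows essentially the same approach as the paper: both arguments work in an adapted orthonormal frame, derive the splitting $\Delta u=\Delta_{\partial\Omega}u+(n-1)\mathcal H\,\partial u/\partial\nu+\partial^2u/\partial\nu^2$ for \eqref{bc1}, and for \eqref{bc2} reduce to the pointwise vector identity $(\nabla_\nu\nabla u)_{\partial\Omega}=\nabla_{\partial\Omega}(\partial u/\partial\nu)-S(\nabla_{\partial\Omega}u)$ before taking $\mathrm{div}_{\partial\Omega}$. The only cosmetic difference is that the paper packages the first computation via a general divergence decomposition of a vector field along $\partial\Omega$ and proves the vector identity by expanding $\nabla(\langle\nabla u,\nu\rangle)$, whereas you use the Gauss formula for the Hessian and a direct contraction with $X\in T\partial\Omega$; both routes are equivalent.
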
 


\begin{proof}[Proof of Theorem \ref{equivalence}]
Assume that a function $u\in C^4(\Omega)\cap C^3(\overline\Omega)$ and a real number $\mu$ are solution of the  eigenvalue equation
\begin{equation}\label{eigenvalue_eq}
\Delta^2 u=\mu u\,,\ \ \ {\rm in\ }\Omega.
\end{equation}
We multiply both sides of \eqref{eigenvalue_eq} by a function $\phi\in C^{\infty}$ and integrate over $\Omega$, obtaining thanks to \eqref{green}
\begin{equation}\label{step-1}
\int_{\Omega}\Delta^2 u\phi dv=\int_{\Omega} \Delta u\Delta\phi dv+\int_{\partial \Omega}\left(\frac{\partial\Delta u}{\partial\nu}\phi-\Delta u\frac{\partial\phi}{\partial\nu}\right)d\sigma=\mu\int_{\Omega} u\phi dv.
\end{equation}
We set, for a function $f\in C^2(\Omega)$
\begin{equation}\label{Q_M}
Q_{\Omega}(f):=\int_{\Omega} (\Delta f)^2 dv-\int_{\Omega}\left(|D^2f|^2+{\rm Ric}(\nabla f,\nabla f) \right) dv
\end{equation}
and
\begin{equation}\label{Q_pM}
Q_{\partial \Omega}(f):=\int_{\partial \Omega}(n-1)\mathcal H\left(\frac{\partial f}{\partial\nu}\right)^2+2\Delta_{\partial \Omega}f\frac{\partial f}{\partial\nu}+II(\nabla_{\partial \Omega}f,\nabla_{\partial \Omega}f) d\sigma.
\end{equation}
We note that
\begin{equation}\label{polar_M}
\frac{1}{4}(Q_{\Omega}(u+\phi)-Q_{\Omega}(u-\phi))=\int_{\Omega}\Delta u\Delta\phi dv-\int_{\Omega} \left(\langle D^2u,D^2\phi\rangle+{\rm Ric}(\nabla u,\nabla\phi)\right)dv
\end{equation}
and that
\begin{multline}\label{polar_pM}
\frac{1}{4}(Q_{\partial\Omega}(u+\phi)-Q_{\partial \Omega}(u-\phi))\\
=\int_{\partial \Omega}(n-1)\mathcal H\frac{\partial u}{\partial\nu}\frac{\partial \phi}{\partial\nu}+\Delta_{\partial \Omega}u\frac{\partial \phi}{\partial\nu}+\frac{\partial u}{\partial\nu}\Delta_{\partial \Omega}\phi+II(\nabla_{\partial \Omega}u,\nabla_{\partial M}\phi) d\sigma.
\end{multline}
Reilly's formula \eqref{reilly} implies that $Q_{\Omega}(u+\phi)-Q_{\Omega}(u-\phi)=Q_{\partial\Omega}(u+\phi)-Q_{\partial \Omega}(u-\phi)$, thus from \eqref{polar_M} and \eqref{polar_pM} we deduce that
\begin{multline}
\int_{\Omega}\Delta u\Delta\phi dv-\int_{\Omega}\left( \langle D^2u,D^2\phi\rangle-{\rm Ric}(\nabla u,\nabla\phi)\right)dv\\
=\int_{\partial \Omega}(n-1)\mathcal H\frac{\partial u}{\partial\nu}\frac{\partial \phi}{\partial\nu}+\Delta_{\partial \Omega}u\frac{\partial \phi}{\partial\nu}+\frac{\partial u}{\partial\nu}\Delta_{\partial \Omega}\phi+II(\nabla_{\partial \Omega}u,\nabla_{\partial M}\phi) d\sigma,
\end{multline}
hence
\begin{multline}\label{step-2-0}
\int_{\Omega}\Delta u\Delta\phi dv=\int_{\Omega} \langle D^2u,D^2\phi\rangle+{\rm Ric}(\nabla u,\nabla\phi)dv\\
=\int_{\partial \Omega}(n-1)\mathcal H\frac{\partial u}{\partial\nu}\frac{\partial \phi}{\partial\nu}+\Delta_{\partial \Omega}u\frac{\partial \phi}{\partial\nu}+\frac{\partial u}{\partial\nu}\Delta_{\partial \Omega}\phi+II(\nabla_{\partial \Omega}u,\nabla_{\partial M}\phi) d\sigma.
\end{multline}
Using \eqref{step-2-0} in \eqref{step-1}, we can re-write \eqref{step-1} as
\begin{multline}\label{step-2}
\int_{\Omega}\Delta^2 u\phi dv=\int_{\Omega} \langle D^2u,D^2\phi\rangle+{\rm Ric}(\nabla u,\nabla\phi)dv\\
+\int_{\partial \Omega}\left((n-1)\mathcal H\frac{\partial u}{\partial\nu}+\Delta_{\partial \Omega}u -\Delta u\right)\frac{\partial\phi}{\partial\nu}d\sigma\\
+\int_{\partial \Omega}\left(II(\nabla_{\partial \Omega}u,\nabla_{\partial \Omega}\phi) +\frac{\partial\Delta u}{\partial\nu}\phi +\frac{\partial u}{\partial\nu}\Delta_{\partial \Omega}\phi \right)d\sigma
=\mu \int_{\Omega} u\phi dv.
\end{multline}
We note now that
\begin{multline}\label{parts-1}
\int_{\partial \Omega}II(\nabla_{\partial \Omega}u,\nabla_{\partial \Omega}\phi)d\sigma=\int_{\partial \Omega}\langle S(\nabla_{\partial \Omega}u),\nabla_{\partial \Omega}\phi\rangle d\sigma\\
=-\int_{\partial \Omega}{\rm div}_{\partial \Omega} S(\nabla_{\partial \Omega}u)\phi d\sigma,
\end{multline}
where the second equality follows from \eqref{div}, and that
\begin{equation}\label{parts-2}
\int_{\partial\Omega}\frac{\partial u}{\partial\nu}\Delta_{\partial \Omega}\phi d\sigma=\int_{\partial \Omega}\Delta_{\partial \Omega}\left(\frac{\partial u}{\partial\nu}\right)\phi d\sigma.
\end{equation}
Thanks to \eqref{parts-1} and \eqref{parts-2}, \eqref{step-2} can be rewritten as follows
\begin{multline}\label{step-3}
\int_{\Omega}\Delta^2 u\phi dv=\int_{\Omega} \langle D^2u,D^2\phi\rangle+{\rm Ric}(\nabla u,\nabla\phi)dv\\
+\int_{\partial \Omega}\left((n-1)\mathcal H\frac{\partial u}{\partial\nu}+\Delta_{\partial \Omega}u -\Delta u\right)\frac{\partial\phi}{\partial\nu}d\sigma\\
+\int_{\partial \Omega}\left( \Delta_{\partial \Omega}\left(\frac{\partial u}{\partial\nu}\right)-{\rm div}_{\partial \Omega} S(\nabla_{\partial \Omega}u)+\frac{\partial\Delta u}{\partial\nu}\right)\phi d\sigma
=\mu \int_{\Omega} u\phi dv.
\end{multline}
Assume now that the function $u$ satisfies the boundary conditions in \eqref{neumann_classic_2}. Then
\begin{equation}\label{variational}
\int_{\Omega} \langle D^2u,D^2\phi\rangle+{\rm Ric}(\nabla u,\nabla\phi)dv=\mu\int_{\Omega} u\phi dv\,,\ \ \ \forall\phi\in C^{\infty}(\Omega).
\end{equation}
From the definition of $H^2(\Omega)$ we deduce the validity of \eqref{variational_0}.

On the other hand, assume that there exist a solution $(u,\mu)\in (C^4(\Omega)\cap C^3(\overline\Omega))\times\mathbb R$ to \eqref{variational_0}. From \eqref{step-3}, by taking test functions $\phi\in C^{\infty}(\Omega)$ we immediately deduce that $u$ solves the differential equation \eqref{eigenvalue_eq} as well as the boundary conditions in \eqref{neumann_classic_2}, thus the pair $(u,\mu)$ is a solution of \eqref{neumann_classic_2}. This concludes the proof.
\end{proof}

We prove now Lemma \ref{equivalence_bc}

\begin{proof}[Proof of Lemma \ref{equivalence_bc}]
We start by proving \eqref{bc1}. Let $\left\{E_i\right\}_{i=1}^n$ an orthonormal frame in a neighborhood of a point $p\in\partial\Omega$ such that $\left\{E_i\right\}_{i=1}^{n-1}$ is a orthonormal frame of $\partial \Omega$ and $E_n=\nu$ is the outward unit normal to $\partial\Omega$. For a Lipschitz vector field $F$ in a neighborhood of $\partial \Omega$, we denote by $F_{\partial\Omega}:=\sum_{i=1}^{n-1}\langle F, E_i\rangle E_i$, hence in $p$
$$
F=F_{\partial \Omega}+\langle F,\nu\rangle\nu.
$$
Note that 
$$
\langle\nabla_{\nu}F,\nu \rangle=\langle\nabla_{\nu}(F_{\partial \Omega}+\langle F,\nu\rangle \nu),\nu \rangle=\langle\nabla_{\nu}F_{\partial \Omega},\nu \rangle+\langle\nabla_{\nu}\langle F,\nu\rangle \nu,\nu \rangle=\langle\nabla_{\nu}\langle F,\nu\rangle \nu,\nu \rangle,
$$
where we have used the fact that $\langle\nabla_{\nu}F_{\partial \Omega},\nu \rangle=0$. Moreover, $\sum_{i=1}^{n-1}\langle\nabla_{E_i}\nu,E_i\rangle={\rm div}\nu=(n-1)\mathcal H$. Thus we have
\begin{multline}\label{div_gen}
{\rm div}F_{|_{\partial \Omega}}=\sum_{i=1}^n\langle\nabla_{E_i}F,E_i\rangle=\sum_{i=1}^n\langle\nabla_{E_i}(F_{\partial\Omega}+\langle F,\nu\rangle\nu),E_i\rangle\\
=\sum_{i=1}^{n-1}\langle\nabla_{E_i}F_{\partial \Omega},E_i\rangle+\sum_{i=1}^n\langle\nabla_{E_i}\langle F,\nu\rangle\nu,E_i\rangle\\
={\rm div}_{\partial \Omega}F_{\partial \Omega}+\sum_{i=1}^{n-1}\langle\nabla_{E_i}\langle F,\nu\rangle\nu,E_i\rangle+\langle\nabla_{\nu}\langle F,\nu\rangle\nu,\nu \rangle\\
= {\rm div}_{\partial \Omega}F_{\partial \Omega}+\langle F,\nu\rangle\sum_{i=1}^{n-1}\langle\nabla_{E_i}\nu,E_i\rangle+\langle\nabla_{\nu}F,\nu \rangle\\
= {\rm div}_{\partial \Omega}F_{\partial \Omega}+(n-1)\mathcal H \langle F,\nu\rangle+\langle\nabla_{\nu}F,\nu \rangle,
\end{multline}
Now, noting that
$$
\nabla u_{|_{\partial \Omega}}=\nabla_{\partial \Omega}u+\frac{\partial u}{\partial\nu}\nu,
$$
and that, by definition
$$
\frac{\partial^2u}{\partial\nu^2}=\langle\nabla_{\nu}\nabla u,\nu\rangle.
$$
we immediately obtain from \eqref{div_gen} the following identity
\begin{multline*}
\Delta u_{|_{\partial \Omega}}={\rm div}_{\partial \Omega}\nabla_{\partial \Omega}u+(n-1)\mathcal H \frac{\partial u}{\partial\nu}+\langle\nabla_{\nu}\nabla u,\nu \rangle\\
=\Delta_{\partial \Omega}u+(n-1)\mathcal H\frac{\partial u}{\partial\nu}+\frac{\partial^2u}{\partial\nu^2}.
\end{multline*}
This proves \eqref{bc1}.

We prove now \eqref{bc2}. Let us consider the second boundary condition in \eqref{neumann_classic_2}. We need to show that 
$$
\Delta_{\partial \Omega}\left(\frac{\partial u}{\partial\nu}\right)-{\rm div}_{\partial M}S(\nabla_{\partial \Omega}u)={\rm div}_{\partial\Omega}\left(\nabla_{\nu}\nabla u\right)_{\partial \Omega},
$$
which can be re-written as
$$
{\rm div}_{\partial \Omega}\left(\nabla_{\partial \Omega}\left(\frac{\partial u}{\partial\nu}\right)-S(\nabla_{\partial \Omega}u)-\left(\nabla_{\nu}\nabla u\right)_{\partial\Omega}\right)=0.
$$
Actually we will prove that
$$
\nabla_{\partial \Omega}\left(\frac{\partial u}{\partial\nu}\right)-S(\nabla_{\partial\Omega}u)-\left(\nabla_{\nu}\nabla u\right)_{\partial\Omega}=0
$$
We note that for any vector fiel $X\in TM$
$$
\langle\nabla\left(\langle\nabla u,\nu\rangle\right),X\rangle=\langle\nabla_X\nabla u,\nu\rangle+\langle\nabla_X\nu ,\nabla u\rangle
=\langle\nabla_\nu\nabla u,X\rangle+\langle\nabla_{\nabla u}\nu ,X\rangle,
$$
since $D^2u$ and $\nabla\nu$ are symmetric. Thus $\nabla\left(\langle\nabla u,\nu\rangle\right)=\nabla_{\nu}\nabla u+\nabla_{\nabla u}\nu$. We have then
\begin{multline*}
\nabla_{\partial\Omega}\left(\frac{\partial u}{\partial\nu}\right)-S(\nabla_{\partial\Omega}u)-\left(\nabla_{\nu}\nabla u\right)_{\partial\Omega}=\\
\nabla_{\partial\Omega}\left(\langle\nabla u,\nu\rangle\right)-\nabla_{\nabla_{\partial \Omega}u}\nu-\nabla_{\nu}\nabla u+\langle\nabla_{\nu}\nabla u,\nu\rangle\nu\\
=\nabla\left(\langle\nabla u,\nu\rangle\right)-\langle\nabla\left(\langle\nabla u,\nu\rangle\right),\nu\rangle\nu-\nabla_{\nabla_{\partial \Omega}u}\nu-\nabla_{\nu}\nabla u+\langle\nabla_{\nu}\nabla u,\nu\rangle\nu\\
=\nabla_{\nu}\nabla u+\nabla_{\nabla u}\nu-\langle \nabla_{\nu}\nabla u,\nu\rangle\nu-\langle \nabla_{\nabla u}\nu,\nu\rangle\nu-\nabla_{\nabla_{\partial \Omega}u}\nu-\nabla_{\nu}\nabla u+\langle\nabla_{\nu}\nabla u,\nu\rangle\nu\\
=\nabla_{\nabla u}\nu-\nabla_{\nabla_{\partial \Omega}u}\nu-\langle \nabla_{\nabla u}\nu,\nu\rangle\nu=0,
\end{multline*}
since
$$
\nabla_{\nabla u}\nu=\nabla_{\nabla_{\partial \Omega}u}\nu+\nabla_{\langle\nabla u,\nu\rangle\nu}\nu=\nabla_{\nabla_{\partial \Omega}u}\nu
$$
and
$$
\langle \nabla_{\nabla u}\nu,\nu\rangle=\langle \nabla_{\nabla_{\partial \Omega}u}\nu,\nu\rangle+\langle \nabla_{\langle\nabla u,\nu\rangle\nu}\nu,\nu\rangle=0.
$$
In fact $\nabla_{\nu}\nu=0$ and $\langle\nabla_{\nabla_{\partial\Omega} u}\nu,\nu\rangle=0$. This proves \eqref{bc2}. The proof is now concluded.
\end{proof}

Since we will be interested in the variational problem \eqref{variational_0}, we can relax the hypothesis on the smoothness of $\Omega$. A sufficient condition for the solvability of \eqref{variational_0} is, e.g., that $\Omega$ is of class $C^1$, see Subsection \ref{sub_22}.


\begin{rem}[The right quadratic form]\label{rem33}
By looking at \eqref{step-1} it is natural to ask what happens if we consider in the left-hand side of \eqref{variational_0} the more familiar quadratic form
\begin{equation}\label{quadratic_wrong}
\int_{\Omega} \Delta u\Delta \phi dv.
\end{equation}
The corresponding variational problem would read
\begin{equation}\label{variational_wrong}
\int_{\Omega} \Delta u\Delta \phi dv=\mu\int_{\Omega} u\phi dv\,,\ \ \ \forall\phi\in H^2(\Omega),
\end{equation}
in the unknowns $u\in H^2(\Omega)$, $\mu\in\mathbb R$. We note that this problem is not well-posed: it is immediate to see that all harmonic functions in $H^2(\Omega)$ are eigenfunctions corresponding to the eigenvalue $\mu=0$ . This is due to the fact that the quadratic form \eqref{quadratic_wrong} is not coercive in $H^2(\Omega)$, indeed we can add to the quadratic form \eqref{quadratic_wrong} a term $\gamma\int_{\Omega}u\phi dv$ with $\gamma>0$ arbitrarily large and obtain a scalar product whose induced norm is not equivalent to the standard one of $H^2(\Omega)$, see also Lemma \ref{coercivity}. In \cite{kalamata} it is proved that \eqref{variational_wrong} has an infinite kernel consisting of all harmonic functions in $H^2(\Omega)$. Moreover, if we rule out the kernel, problem \eqref{variational_wrong}  admits an increasing sequence of positive eigenvalues of finite multiplicity which coincide with the Dirichlet eigenvalues of the biharmonic operator. It is not difficult to adapt the results of \cite{kalamata} to the case of domains in a Riemannian manifold. The classical formulation of problem \eqref{variational_wrong} reads 
\begin{equation}\label{wrong_bc}
\begin{cases}
\Delta^2 u=\mu u\,, & {\rm in\ }\Omega,\\
\Delta u=0\,, & {\rm on\ }\partial\Omega,\\
\frac{\partial\Delta u}{\partial\nu}=0\,, & {\rm on\ }\partial\Omega.
\end{cases}
\end{equation}
We remark that Neumann boundary conditions are usually called ``natural boundary conditions'' and in a certain sense arises from ``solving a variational problem on the largest possible energy space'', which in this case is $H^2(\Omega)$. In this space, problem \eqref{variational_wrong} is evidently not well posed.

We also remark that the situation is completely different if we impose Dirichlet boundary conditions, namely if we consider problem
\begin{equation}\label{dirichlet_bc}
\begin{cases}
\Delta^2 u=\Lambda u\,, & {\rm in\ }\Omega,\\
u=0\,, & {\rm on\ }\partial\Omega,\\
\frac{\partial u}{\partial\nu}=0\,, & {\rm on\ }\partial\Omega,
\end{cases}
\end{equation}
in the unknowns $u$ (the eigenfunction) and $\Lambda$ (the eigenvalue). In this case, the corresponding weak formulation is
\begin{equation}\label{dirichlet_weak}
\int_{\Omega}\Delta u\Delta\phi dv=\Lambda\int_{\Omega}  u\phi dv\,,\ \ \ \forall\phi\in H^2_0(\Omega),
\end{equation}
in the unknowns $u\in H^2_0(\Omega)$, $\Lambda\in\mathbb R$. In this case boundary conditions are no more ``natural'' but are ``imposed'' with the choice of the energy space $H^2_0(\Omega)$. Actually, problem \eqref{dirichlet_weak} can be written in the form \eqref{variational_0} with the space $H^2(\Omega)$ replaced by $H^2_0(\Omega)$. In fact, it is easy to see that
\begin{equation}\label{dirichlet_equivalence}
\int_{\Omega}\Delta u\Delta\phi dv=\int_{\Omega} \langle D^2u,D^2\phi\rangle+{\rm Ric}(\nabla u,\nabla\phi)dv
\end{equation}
for all $u,\phi\in H^2_0(\Omega)$, see \eqref{step-1} and \eqref{step-3}. It turns out that \eqref{variational_0} and \eqref{variational_wrong} are equivalent in $H^2_0(\Omega)$. 

The situation is similar if $\Omega=M$ is a compact complete (boundaryless) smooth Riemannian manifold. In this case $H^2(M)=H^2_0(M)$ (see \cite{hebey}), hence \eqref{dirichlet_equivalence} holds for all $u,\phi\in H^2(M)$. Thus, the weak formulation of the biharmonic closed problem on $M$ is fairly simple, and actually it turns out that the eigenvalues of the biharmonic operator on $M$ coincide with the squares of the eigenvalues of the Laplacian on $M$, the eigenfunctions being the same. We refer to Subsection \ref{boundaryless} for more details.

Finally, we remark that one can also consider the variational problem
\begin{equation}\label{variational_3}
\int_{\Omega}\langle D^2u,D^2\phi\rangle dv=\mu\int_{\Omega}u\phi dv\,,\ \ \ \forall\phi\in H^2(\Omega),
\end{equation}
in the unknowns $u\in H^2(\Omega)$, $\mu\in\mathbb R$. As it is done in Subsection \ref{sub_22} it is possible to prove that problem \eqref{variational_3} is well-posed and admits an increasing sequence of non-negative eigenvalues of finite multiplicity. However, it is not always possible to recover an eigenvalue problem of the form \eqref{neumann_classic} starting from a smooth solution of \eqref{variational_3} as in the proof of Theorem \ref{equivalence}, except for few particular cases. In fact, by following the proof of Theorem \ref{equivalence}, we are left to deal with the term $\int_{\Omega}{\rm Ric}(\nabla u,\nabla\phi)dv$, and we would like to have an identity of the form
$$
\int_{\Omega}{\rm Ric}(\nabla u,\nabla\phi)dv=\int_{\Omega}L(u)\phi dv+\int_{\partial\Omega}B_1(u)\phi d\sigma+\int_{\partial\Omega}B_2(u)\frac{\partial\phi}{\partial\nu} d\sigma\,,\ \ \ \forall\phi\in H^2(\Omega),
$$
for suitable differential operators $L,B_1,B_2$. It is not in general possible to have explicit form for $L,B_1,B_2$ (they exist by Riesz Theorem). If $(M,g)$ is an Einstein manifold, that is, ${\rm Ric}=K g$, then $L(u)=-K\Delta u$, $B_1(u)=K\frac{\partial u}{\partial\nu}$ and $B_2(u)=0$. Thus, any smooth solution of \eqref{variational_3} solves
\begin{equation}\label{neumann_classic_einstein}
\begin{cases}
\Delta^2 u+K\Delta u=\mu u, & {\rm in\ }\Omega,\\
\frac{\partial^2u}{\partial\nu^2}=0, & {\rm on\ }\partial \Omega,\\
{\rm div}_{\partial \Omega}\left(\nabla_{\nu}\nabla u\right)_{\partial\Omega}+\frac{\partial\Delta u}{\partial\nu}+K\frac{\partial u}{\partial\nu}=0, & {\rm on\ }\partial \Omega.
\end{cases}
\end{equation}
Problem \eqref{neumann_classic_einstein} contains lower order terms in the eigenvalue equation and in the second boundary condition.
\end{rem}




\subsection{Neumann eigenvalues of the biharmonic operator}\label{sub_22}

We prove here that, under suitable hypothesis on $\Omega$, problem \eqref{variational} admits an increasing sequence of eigenvalues of finite multiplicity bounded from below and diverging to $+\infty$. To do so we recast problem \eqref{variational_0} into an eigenvalue problem for a compact self-adjoint operator acting on a Hilbert space. First we note that \eqref{variational_0} can be re-written as
\begin{equation}\label{variational-2}
\int_{\Omega} \langle D^2u,D^2\phi\rangle+{\rm Ric}(\nabla u,\nabla\phi)+\gamma u\phi dv=\Gamma\int_{\Omega} u\phi dv\,,\ \ \ \forall\phi\in H^2(\Omega),
\end{equation}
where $\gamma\in\mathbb R$ is fixed, in the unknowns $u\in H^2(\Omega)$ and $\Gamma\in\mathbb R$. Clearly a pair $(u,\mu)\in H^2(\Omega)\times\mathbb R$ is a solution of \eqref{variational_0} if and only if the pair $(u,\mu+\gamma)\in H^2(\Omega)\times\mathbb R$ is a  solution of \eqref{variational-2}. We will study the eigenvalue problem in the equivalent formulation \eqref{variational-2} for suitable choices of $\gamma$.

We consider on $H^2(\Omega)$ the bilinear form 
\begin{equation}\label{prod}
\langle f,g\rangle_{\mathcal H^2(\Omega)}:=\int_{\Omega}\left(\langle D^2f,D^2g\rangle+{\rm Ric}(\nabla f,\nabla g)+\gamma fg\right)dv,
\end{equation}
with $\gamma>0$. We denote by $\mathcal H^2(\Omega)$ the space $H^2(\Omega)$ endowed with the form \eqref{prod}. We also set
\begin{equation}\label{norm}
\|f\|_{\mathcal H^2(\Omega)}^2:=\int_{\Omega}\left(|D^2f|^2+{\rm Ric}(\nabla f,\nabla f)+\gamma f^2\right)dv.
\end{equation}

We state the following lemma, whose proof we postpone at the end of this section.
\begin{lem}\label{coercivity}
Let $(M,g)$ be a complete $n$-dimensional smooth Riemannian manifold and let $\Omega$ be a bounded domain in $M$ with $C^1$ boundary. There exist $\gamma_0>0$ such that for all $\gamma>\gamma_0$, the bilinear form \eqref{prod} defines a scalar product in $H^2(\Omega)$ which induces on $H^2(\Omega)$ a norm which is equivalent to the standard one.
\end{lem}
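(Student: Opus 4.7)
The plan is to prove the two-sided inequality between $\|\cdot\|_{\mathcal{H}^2(\Omega)}$ and the standard norm $\|\cdot\|_{H^2(\Omega)}$, from which both symmetry/positive-definiteness of the bilinear form \eqref{prod} and the norm equivalence follow at once. Since $\overline{\Omega}$ is compact and $\mathrm{Ric}$ is smooth, there exists a constant $C_1 = C_1(\Omega,g) \geq 0$ such that
$$
-C_1 |X|^2 \leq \mathrm{Ric}(X,X) \leq C_1 |X|^2 \qquad \forall X \in TM,\ p\in\overline{\Omega}.
$$

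For the upper estimate, I would simply bound the Ricci term by $C_1|\nabla f|^2$ to obtain
$$
\|f\|_{\mathcal{H}^2(\Omega)}^2 \leq \int_{\Omega}\bigl(|D^2f|^2 + C_1|\nabla f|^2 + \gamma f^2\bigr) dv \leq \max(1,C_1,\gamma)\,\|f\|_{H^2(\Omega)}^2,
$$
which gives $\|f\|_{\mathcal{H}^2(\Omega)} \leq C\|f\|_{H^2(\Omega)}$ for every $\gamma > 0$.

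The reverse inequality is the real content. The key ingredient is an $H^2$ interpolation inequality of Gagliardo--Nirenberg type: for every $\varepsilon > 0$ there exists $C(\varepsilon) > 0$ such that
$$
\int_{\Omega} |\nabla f|^2\, dv \leq \varepsilon \int_{\Omega} |D^2 f|^2\, dv + C(\varepsilon) \int_{\Omega} f^2\, dv, \qquad \forall f \in H^2(\Omega).
$$
On bounded domains of $\mathbb R^n$ with $C^1$ boundary this is classical; on a Riemannian manifold with $C^1$ boundary it is obtained by a finite cover of $\overline{\Omega}$ by normal coordinate charts, a partition of unity, and the Euclidean inequality, exploiting that the metric coefficients and Christoffel symbols are uniformly bounded on $\overline{\Omega}$. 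Using the Ricci lower bound together with this inequality (with $\varepsilon$ chosen so that $C_1\varepsilon < 1/2$) yields
$$
\|f\|_{\mathcal{H}^2(\Omega)}^2 \geq \int_\Omega |D^2 f|^2\, dv - C_1 \int_\Omega |\nabla f|^2\, dv + \gamma \int_\Omega f^2\, dv \geq \tfrac{1}{2}\int_\Omega |D^2 f|^2 dv + (\gamma - C_1 C(\varepsilon))\int_\Omega f^2\, dv.
$$
Choosing $\gamma_0 := C_1 C(\varepsilon) + 1$, every $\gamma > \gamma_0$ makes the right-hand side bounded below by $\tfrac{1}{2}\int_\Omega |D^2 f|^2\, dv + \int_\Omega f^2\, dv$. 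A second application of the interpolation inequality (absorbing an arbitrarily small multiple of the Hessian term) then recovers $\int_\Omega |\nabla f|^2\, dv$ and produces a constant $c > 0$ with
$$
\|f\|_{\mathcal{H}^2(\Omega)}^2 \geq c\, \|f\|_{H^2(\Omega)}^2.
$$

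Once both inequalities are established, the bilinear form \eqref{prod} is clearly symmetric, and the lower bound shows it is positive definite on $H^2(\Omega)$ (so in particular it really is a scalar product); the two-sided bound is the required norm equivalence. The main technical obstacle is justifying the interpolation inequality on a manifold with merely $C^1$ boundary. The cleanest way is to combine the Euclidean Gagliardo--Nirenberg interpolation in local charts with a standard extension operator from $H^2(\Omega)$ to $H^2(M')$ for a slightly larger open set $M' \Supset \overline{\Omega}$ (which exists for $C^1$ boundaries); everything else is bookkeeping with the uniformly bounded local representations of $g$ on $\overline{\Omega}$.
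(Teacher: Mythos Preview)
Your proof is correct but proceeds differently from the paper. The paper establishes the key lower bound by a compactness/contradiction argument: assuming that for some fixed $B>0$ no constant $A$ makes $\int_\Omega(|D^2u|^2 - B|\nabla u|^2 + Au^2)\,dv \ge 0$ hold for all $u$, one produces a sequence $u_k$ with $\|\nabla u_k\|_{L^2}=1$, $\|D^2 u_k\|_{L^2}\le B$, and $\|u_k\|_{L^2}\to 0$, extracts a weak $H^2$-limit $\bar u$, and uses the compact embedding $H^2(\Omega)\hookrightarrow H^1(\Omega)$ (valid for $C^1$ boundary, and already invoked in the paper for the spectral theory) to conclude $\|\nabla\bar u\|_{L^2}=1$ while $\bar u=0$, a contradiction. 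This is of course a soft proof of exactly the interpolation inequality you quote. Your route is more constructive and would yield explicit constants, but it obliges you to carry out the chart/partition-of-unity bookkeeping and to justify an $H^2$ extension across a $C^1$ boundary; the paper's argument is shorter and sidesteps all of that by leaning on the compact embedding, at the price of being non-quantitative.
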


Through all this subsection, we fix once and for all a positive number $\gamma>\gamma_0$, where $\gamma_0$ is as in Lemma \ref{coercivity}.

Then we define the operator $\mathcal P$ as an operator from $\mathcal H^2(\Omega)$ to its dual $\mathcal H^2(\Omega)'$ by setting
\begin{equation}
\mathcal P(u)[\phi]:=\int_{\Omega}\left(\langle D^2u,D^2\phi\rangle+{\rm Ric}(\nabla u,\nabla\phi)+\gamma u \phi\right) dv\,,\ \ \ \forall u,\phi\in \mathcal H^2(\Omega).
\end{equation}
By the Riesz Theorem it follows that $\mathcal P$ is surjective isometry. Then we consider the operator $J$ from $\mathcal H^2(\Omega)\subset L^2(\Omega)$  to $\mathcal H^2(\Omega)'$ defined by
\begin{equation}
J(u)[\phi]:=\int_{\Omega}u\phi dv\,,\ \ \ \forall u,\phi\in \mathcal H^2(\Omega).
\end{equation}
If the embedding $H^2(\Omega)\subset L^2(\Omega)$ is compact, then the operator $J$ is compact. Finally, we set 
\begin{equation}
T=\mathcal P^{(-1)}\circ J.
\end{equation}
If  $J$ is compact, since $\mathcal P$ is bounded, then also $T$ is compact. Moreover
$$
\langle T(u),\phi\rangle_{\mathcal H^2(\Omega)}=\langle u,\phi\rangle_{L^2(\Omega)},
$$
for all $u,\phi\in \mathcal H^2(\Omega)$. Hence $T$ is self-adjoint. Note that ${\rm Ker}\,T={\rm Ker}\, J=\left\{0\right\}$ and the non-zero eigenvalues of $T$ coincide with the reciprocals of the eigenvalues of \eqref{variational-2}, the eigenfunctions being the same. If $\Omega$ is of class $C^1$, then the embeddings $H^2(\Omega)\subset H^1(\Omega)\subset L^2(\Omega)$ are compact (see e.g., \cite[\S\,2]{aubin}).

We are now ready to  prove the following theorem.
\begin{thm}\label{eigenvalues}
Let $(M,g)$ be a smooth $n$-dimensional Riemannian manifold and let $\Omega$ be a bounded domain in $M$ with $C^1$ boundary. Then the eigenvalues of \eqref{variational_0} have finite multiplicity and are given by a non-decreasing sequence of real numbers $\left\{\mu_j\right\}_{j=1}^{\infty}$ bounded from below defined by
\begin{equation}\label{minmax}
\mu_j=\min_{\substack{U\subset H^2(M)\\{\rm dim}U=j}}\max_{0\ne u\in U}\frac{\int_{\Omega}|D^2u|^2+{\rm Ric}(\nabla u,\nabla u)dv}{\int_{\Omega} u^2 dv},
\end{equation}
where each eigenvalue is repeated according to its multiplicity.

Moreover, there exists a Hilbert basis of $\left\{u_j\right\}_{j=1}^{\infty}$ of $\mathcal H^2(\Omega)$ of eigenfunctions $u_j$ associated with the eigenvalues $\mu_j$. By normalizing the eigenfunctions with respect to \eqref{norm}, then $\big\{\frac{u_j}{\sqrt{\mu_j+\gamma}}\big\}_{j=1}^{\infty}$ define a Hilbert basis of $L^2(\Omega)$ with respect to its standard scalar product.
\end{thm}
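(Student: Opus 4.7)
The plan is to recast the eigenvalue problem \eqref{variational_0} as the spectral problem for a compact, self-adjoint, positive operator on a Hilbert space, and then to invoke the spectral theorem together with the Courant--Fischer min-max principle. Having fixed $\gamma>\gamma_0$ as in Lemma \ref{coercivity}, $\mathcal{H}^2(\Omega)$ is a Hilbert space whose norm is equivalent to the usual one on $H^2(\Omega)$, so $\mathcal{P}$ is an isometric isomorphism onto $\mathcal{H}^2(\Omega)'$ by the Riesz representation theorem. The essential analytical input beyond this is the compactness of the inclusion $H^2(\Omega)\hookrightarrow L^2(\Omega)$ when $\Omega$ has $C^1$ boundary (a standard Rellich--Kondrachov-type fact on Riemannian manifolds, cf.\ \cite[\S 2]{aubin}), from which $J$ and therefore $T=\mathcal{P}^{-1}\circ J$ are compact.

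Next I would observe that the defining identity $\langle Tu,\phi\rangle_{\mathcal{H}^2(\Omega)}=\langle u,\phi\rangle_{L^2(\Omega)}$ is symmetric in $u,\phi$ and yields $\langle Tu,u\rangle_{\mathcal{H}^2(\Omega)}=\|u\|_{L^2(\Omega)}^2>0$ for $u\neq 0$, so $T$ is self-adjoint and positive with trivial kernel. The spectral theorem then supplies an $\mathcal{H}^2(\Omega)$-orthonormal basis $\{u_j\}_{j=1}^{\infty}$ of eigenvectors of $T$, with positive eigenvalues $\lambda_j$ of finite multiplicity arranged in non-increasing order and tending to $0$. Setting $\Gamma_j=1/\lambda_j$ then produces the non-decreasing sequence of eigenvalues of \eqref{variational-2} diverging to $+\infty$, and the substitution $\mu_j=\Gamma_j-\gamma$ yields the required sequence for \eqref{variational_0}, automatically bounded below by $-\gamma$.

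The min-max formula \eqref{minmax} follows from the Courant--Fischer characterization applied to the Rayleigh quotient $\|u\|_{\mathcal{H}^2(\Omega)}^2/\|u\|_{L^2(\Omega)}^2$ on $\mathcal{H}^2(\Omega)$, whose min-max values over $j$-dimensional subspaces are exactly $\Gamma_j$; since $\|u\|_{\mathcal{H}^2(\Omega)}^2$ differs from the numerator in \eqref{minmax} by the constant multiple $\gamma\int_{\Omega}u^2\,dv$, subtracting $\gamma$ gives the stated formula. For the $L^2$-Hilbert basis claim, testing the eigenvalue equation against $u_j$ itself shows $\|u_j\|_{L^2(\Omega)}^2=\lambda_j=1/(\mu_j+\gamma)$ once $u_j$ is $\mathcal{H}^2(\Omega)$-normalized, so the appropriate rescaling produces an $L^2$-orthonormal family; completeness in $L^2(\Omega)$ is then obtained from the density of $H^2(\Omega)$ in $L^2(\Omega)$ combined with the $\mathcal{H}^2$-completeness of $\{u_j\}$, via the standard argument that any $f\in L^2(\Omega)$ orthogonal to all $u_j$ in $L^2$ must vanish.

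There is no serious obstacle: the compactness of $H^2(\Omega)\hookrightarrow L^2(\Omega)$ on $C^1$ domains is the only nontrivial external ingredient, and everything else amounts to carefully tracking the shift $\gamma$ and the relation between the two natural normalizations (in $\mathcal{H}^2(\Omega)$ versus in $L^2(\Omega)$) of the eigenfunctions.
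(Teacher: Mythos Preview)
Your argument is correct and follows essentially the same route as the paper: recast \eqref{variational_0} via the shift $\gamma$ as an eigenvalue problem for the compact self-adjoint operator $T=\mathcal P^{-1}\circ J$ on $\mathcal H^2(\Omega)$, apply the spectral theorem, and read off the min-max formula after undoing the shift. The only minor deviation is in the $L^2$-completeness step: the paper introduces a second compact operator $T'=i\circ\mathcal P^{-1}\circ J'$ acting on $L^2(\Omega)$ and applies the spectral theorem there, whereas you argue directly from the $\mathcal H^2$-completeness of $\{u_j\}$ together with the density of $H^2(\Omega)$ in $L^2(\Omega)$; both arguments are standard and yield the same conclusion.
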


\begin{proof}
By the Hilbert-Schmidt Theorem applied to the compact self-adjoint operator $T$ it follows that $T$ admits an increasing sequence of positive eigenvalues $\left\{q_j\right\}_{j=1}^{\infty}$, bounded from above, converging to zero and a corresponding Hilbert basis $\left\{u_j\right\}_{j=1}^{\infty}$ of eigenfunctions of $\mathcal H^2(\Omega)$. Since $q\ne 0$ is an eigenvalue of $T$ if and only if $\mu=\frac{1}{q}-\gamma$ is an eigenvalue of \eqref{variational_0} with the same eigenfunction, we deduce the validity of the first part of the statement. In particular, formula \eqref{minmax} follows from the standard min-max formula for the eigenvalues of compact self-adjoint operators.

To prove the final part of the theorem, we recast problem \eqref{variational-2} into an eigenvalue problem for the compact self-adjoint operator $T'=i\circ\mathcal P^{(-1)}\circ J'$,
where $ J'$ denotes the map from $L^2(\Omega)$ to  the dual of $\mathcal H^2(\Omega)$ defined by 
\begin{equation}
 J'(u)[\phi]:=\int_{\Omega}u\phi dv\,,\ \ \ \forall u\in L^2(\Omega),\phi\in \mathcal H^2(\Omega),
\end{equation}
and $i$ denotes the embedding of $\mathcal H^2(\Omega)$ into $L^2(\Omega)$. We apply again the Hilbert-Schmidt Theorem and observe that $T$ and $T'$ admit the same non-zero eigenvalues, and that the eigenfunctions of $T'$ can be chosen in $\mathcal H^2(\Omega)$ and coincide with the eigenfunctions of $T$. From \eqref{variational-2} we deduce that the normalized eigenfunction $u_j$ of $T$ with respect to \eqref{norm}, divided by $\sqrt{\mu_j+\gamma}$, form a orthonormal basis of $L^2(\Omega)$. This concludes the proof.
\end{proof}

We prove now Lemma \ref{coercivity}.

\begin{proof}[Proof of Lemma \ref{coercivity}]
It is easy to see that there exists $C>0$ (possibly depending on $\Omega$, $M$ and $\gamma$) such that for any $u\in H^2(\Omega)$ 
$$
\|u\|_{\mathcal H^2(\Omega)}^2\leq C \|u\|_{H^2(\Omega)}^2,
$$
in fact we can trivially take $C=\max\left\{1,\|{\rm Ric}\|_{L^{\infty}(\Omega)},\gamma\right\}$. 

We prove now the opposite inequality 
\begin{equation}\label{coerc0}
\|u\|_{\mathcal H^2(\Omega)}^2\geq \frac{1}{C} \|u\|_{H^2(\Omega)}^2,
\end{equation}
possibly re-defining the constant $C$. We note that for $\varepsilon\in(0,1)$
\begin{multline}
\|u\|_{\mathcal H^2(\Omega)}^2\geq \varepsilon \|u\|_{H^2(\Omega)}^2\\
+(1-\varepsilon)\int_{\Omega}\left(|D^2u|^2-\frac{\|{\rm Ric}\|_{L^{\infty}(\Omega)}+\varepsilon}{1-\varepsilon}|\nabla u|^2+\frac{\gamma-\varepsilon}{1-\varepsilon}u^2\right)dv
\end{multline}
Hence, in order to prove \eqref{coerc0} is is sufficient to prove that for any fixed $B>0$ there exists a constant $A>0$ such that
$$
\int_{\Omega}\left(|D^2u|^2-B|\nabla u|^2+A u^2\right)dv\geq 0.
$$
We argue by contradiction and assume that such constant does not exists. We find a sequence $\left\{u_k\right\}_{k=1}^{\infty}\subset H^2(\Omega)$ such that
$$
\int_{\Omega}\left(|D^2u_k|^2+k u_k^2\right)dv\leq B\int_{\Omega}|\nabla u_k|^2dv.
$$
We normalize the functions $u_k$ by setting $\int_{\Omega}|\nabla u_k|^2dv=1$. Hence $\int_{\Omega}|D^2u_k|^2dv\leq B$ and $\int_{\Omega}u_k^2dv\leq \frac{B}{k}$, thus the sequence $\left\{u_k\right\}_{k=1}^{\infty}$ is bounded in $H^2(\Omega)$. Passing to a subsequence, we have that $u_k\rightharpoonup\bar u$ in $H^2(\Omega)$ as $k\rightarrow +\infty$ (we have re-labeled the elements of the subsequence as $u_k$) and $u_k\rightarrow\bar u$ in $H^1(\Omega)$ by the compactness of the embedding $H^2(\Omega)\subset H^1(\Omega)$. Hence $\int_{\Omega}|\nabla \bar u|^2dv=\lim_{k\rightarrow+\infty}\int_{\Omega}|\nabla u_k|^2dv=1$ and $\int_{\Omega}\bar u^2dv=\lim_{k\rightarrow+\infty}\int_{\Omega}u_k^2dv=0$. Then we have found a function $\bar u\in H^2(\Omega)$ such that $\int_{\Omega}|\nabla\bar u|^2dv=1$ and $\int_{\Omega}\bar u^2dv=0$, a contradiction. This concludes the proof of \eqref{coerc0} and of the lemma.
\end{proof}

\section{A few properties of Neumann eigenvalues}\label{afew}

In this section we investigate a few properties of the eigenvalues $\mu_j$ of problem \eqref{variational_0}. In particular we study the behavior of the ratio $\frac{\mu_j}{m_j^2}$, where $m_j$ are the Neumann eigenvalues of the Laplacian on $\Omega$. In fact, in view of the asymptotic laws \eqref{weyl} and \eqref{weyl2}, it is natural to compare $\mu_j$ with $m_j^2$. In particular we show that this ratio can be arbitrarily large or arbitrarily close to zero. We denote by 
$$
0=m_1<m_2\leq\cdots\leq m_j\leq\cdots\nearrow +\infty
$$
the Neumann eigenvalues of the Laplacian on $\Omega$, which are given by
\begin{equation}\label{minmax_laplacian}
m_j=\min_{\substack{U\subset H^1(\Omega)\\{\rm dim}U=j}}\max_{0\ne u\in U}\frac{\int_{\Omega}|\nabla u|^2dv}{\int_{\Omega}u^2dv}.
\end{equation}
Here $H^1(\Omega)$ denotes the closure of $C^{\infty}(\Omega)$ with respect to the norm $\int_{\Omega}|\nabla u|^2+u^2 dv$.

We also consider the sign of the eigenvalues, proving that in some situations negative eigenvalues may appear. As a consequence we also provide examples where the ratio $\frac{\mu_j}{m_j^2}$ can be made negative and with arbitrarily large absolute value. In order to produce suitable examples, we restrict our analysis to the Euclidean space, to manifolds with ${\rm Ric}\geq (n-1)K>0$ and to the standard hyperbolic space $\mathbb H^n$.




\subsection{Domains of the Euclidean space}\label{sub_eucl}
Through this subsection $(M,g)$ is the standard Euclidean space $\mathbb R^n$. It is well-known that if $\Omega$ is a bounded Lipschitz domain, then
$$
0=\mu_1=\mu_2=\cdots=\mu_{n+1}<\mu_{n+2}\leq\cdots\leq\mu_j\leq\cdots\nearrow+\infty,
$$
and the eigenspace corresponding to the eigenvalue $\mu=0$ is generated by $\left\{1,x_1,...,x_n\right\}$, see e.g., \cite{kalamata}.

We have the following theorem.
\begin{thm}\label{eucl}
For all $N\in\mathbb N$ there exists a sequence $\left\{\Omega_{\varepsilon,N}\right\}_{\varepsilon\in(0,\varepsilon_0)}$ such that
$$
\lim_{\varepsilon\rightarrow 0^+}\frac{\mu_j}{m_j^2}\rightarrow 0,
$$
for all $N+2\leq j\leq (N+1)(n+1)$,
\end{thm}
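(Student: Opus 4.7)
The plan is to take $\Omega_{\varepsilon,N}$ to be a dumbbell domain in $\mathbb R^n$: fix $N+1$ pairwise disjoint unit balls $B^{(1)}, \ldots, B^{(N+1)}$ centered at points $p_1, \ldots, p_{N+1}$ and connect them by a fixed pattern of smooth tubes of width $\varepsilon$ with smooth caps, so that $\Omega_{\varepsilon,N}$ is a bounded $C^\infty$ domain. The guiding heuristic is that on a connected Euclidean domain the kernel of the biharmonic Neumann Rayleigh form consists of functions with vanishing Hessian, i.e., the $(n+1)$-dimensional space spanned by $1, x_1, \ldots, x_n$, whereas the Neumann Laplacian kernel is only $1$-dimensional. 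As $\varepsilon \to 0$ the domain splits into $N+1$ components, so one should obtain $(N+1)(n+1)$ collapsing biharmonic eigenvalues against only $N+1$ collapsing Laplacian eigenvalues, and on the overlap range $N+2 \leq j \leq (N+1)(n+1)$ one will have $\mu_j \to 0$ while $m_j$ stays bounded away from zero.

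For the upper bound on $\mu_j$, for each pair $(i,k)$ with $i \in \{1,\ldots,N+1\}$ and $k \in \{0,1,\ldots,n\}$, I would construct a test function $\psi_{i,k} \in H^2(\Omega_{\varepsilon,N})$ equal to the centered affine function $f_{i,k}$ on $B^{(i)}$ (with $f_{i,0} \equiv 1$ and $f_{i,k}(x) = x_k - (p_i)_k$ for $k \geq 1$), vanishing on $B^{(j)}$ for $j \neq i$, and interpolated smoothly through the tubes with $|D^2 \psi_{i,k}|$ bounded independently of $\varepsilon$. Since $f_{i,k}$ is affine, $D^2 \psi_{i,k}$ is supported only in the tube/cap region, of volume $O(\varepsilon^{n-1})$, so
\begin{equation*}
\int_{\Omega_{\varepsilon,N}} |D^2 \psi_{i,k}|^2 \, dv = O(\varepsilon^{n-1}), \qquad \int_{\Omega_{\varepsilon,N}} \psi_{i,k}^2 \, dv \geq c > 0.
\end{equation*}
By the centered choice of $f_{i,k}$, the $(N+1)(n+1)$ functions $\psi_{i,k}$ are $L^2$-orthogonal on each ball and have negligible overlap across balls, so their Gram matrix is a positive diagonal plus $O(\varepsilon^{n-1})$; on the subspace they span, the Rayleigh quotient is therefore $O(\varepsilon^{n-1})$. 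Since $\mathrm{Ric} \equiv 0$ in $\mathbb R^n$, the min-max formula \eqref{minmax} yields $\mu_j(\Omega_{\varepsilon,N}) \to 0$ for all $j \leq (N+1)(n+1)$.

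For the lower bound on $m_j$, I would invoke the classical convergence theorem for Neumann Laplacian eigenvalues on dumbbell domains: as $\varepsilon \to 0$, $m_j(\Omega_{\varepsilon, N})$ converges to the $j$-th Neumann Laplacian eigenvalue of the disjoint union $\bigsqcup_{i=1}^{N+1} B^{(i)}$. This limit spectrum begins with $N+1$ zeros (one per component) and its $(N+2)$-th value equals the first nonzero Neumann Laplacian eigenvalue $\lambda > 0$ of a unit ball. Thus $m_j(\Omega_{\varepsilon,N}) \geq \lambda/2 > 0$ for every $j \geq N+2$ once $\varepsilon$ is small, and combining with the upper bound gives $\mu_j/m_j^2 \to 0$ on the stated range.

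The principal technical obstacle will be the quantitative near-orthogonality and uniform control of the Rayleigh quotient on the entire $(N+1)(n+1)$-dimensional subspace, not merely on the basis vectors $\psi_{i,k}$: one must verify that both off-diagonal Gram entries and cross terms in $\int \langle D^2 \psi_{i,k}, D^2 \psi_{i',k'}\rangle\, dv$ are of lower order in $\varepsilon$ than the diagonal contributions, which boils down to careful tube constructions with uniformly bounded second derivatives. A minor secondary point is to invoke the dumbbell convergence theorem in a formulation strong enough to bound $m_{N+2}$ below uniformly in $\varepsilon$, which is by now standard for the Neumann Laplacian.
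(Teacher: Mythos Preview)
Your proposal is correct and follows essentially the same approach as the paper: build an $(N+1)$-component dumbbell, use affine-times-cutoff test functions on each component to force $\mu_j\to 0$ for $j\le (N+1)(n+1)$, and invoke the standard Neumann--Laplacian dumbbell convergence to keep $m_j$ bounded below for $j\ge N+2$. The paper carries this out explicitly only for $n=2$, $N=1$ with squares joined by a thin rectangle (and cites Arrieta for the Laplacian limit, which also picks up Dirichlet eigenvalues of the one-dimensional channel---a harmless addition since they are positive), while you sketch the general case with balls and smooth tubes; the two are otherwise identical.
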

\begin{proof}
The domains providing the result are obtained by connecting with thin junctions a fixed domain $\Omega$ to $N$ domains $\Omega_1,...,\Omega_{N}$, $N\in\mathbb N$, of fixed volume and disjoint from $\Omega$, and by letting the size of the junctions go to zero. We will prove the theorem for $n=2$ and $N=1$.

For $\varepsilon\in(0,1)$, let $\Omega_{\varepsilon,1}=\Omega_{\varepsilon}:=\Omega_L\cup\Omega_R\cup R_{\varepsilon}$, where $\Omega_L=(-1,0)\times(0,1)$, $\Omega_R=(1,2)\times(0,1)$, and $R_{\varepsilon}:=\left\{x\in\mathbb R^2:0\leq x_1\leq 1\,,0<x_2<\varepsilon\right\}$.

Let $\left\{m_j\right\}_{j=1}^{\infty}$ denote the Neumann eigenvalues of $\Omega_L\cup\Omega_R$, let $\left\{\xi_j\right\}_{j=1}^{\infty}$ denote the eigenvalues of $-f''(t)=\xi f(t)$ in $(0,1)$ with Dirichlet boundary conditions, and let $\left\{m_j^{\varepsilon}\right\}_{j=1}^{\infty}$ denote the Neumann eigenvalues of $\Omega_{\varepsilon}$. It is known that the sequence  $\left\{m_j^{\varepsilon}\right\}_{j=1}^{\infty}$ converges to the sequence $\left\{m_j\right\}_{j=1}^{\infty}\cup \left\{\xi_j\right\}_{j=1}^{\infty}$, where in the union the eigenvalues have been ordered increasingly, see e.g., \cite{arrieta00}.
In particular, $m_1^{\varepsilon}=0$, $m_2^{\varepsilon}\rightarrow 0$ as $\varepsilon\rightarrow 0^+$, and $m_j^{\varepsilon}$ are uniformly bounded from below by some positive constant independent on $\varepsilon$ for $j\geq 3$.

Let now $\mu_j^{\varepsilon}$ denote the eigenvalues of \eqref{variational_0} in $\Omega_{\varepsilon}$. We prove that $\mu_j^{\varepsilon}\leq C\varepsilon$ for $j\leq 6$, where $C>0$ does not depend on $\varepsilon$.

To do so, let $\phi_L(x_1,x_2)\in C^{2}(\mathbb R^2)$ be such that $\phi_L(x_1,x_2)=1$ if $x_1<0$, $0\leq \phi_L(x_1,x_2)\leq 1$ for $0\leq x_1\leq\frac{1}{2}$, and $\phi_L(x_1,x_2)=0$ if $x_1>\frac{1}{2}$. By construction $|D^2\phi_L(x_1,x_2)|\leq c$ for some $c>0$. We define analogously $\phi_R(x_1,x_2)\in C^{2}(\mathbb R^2)$ which is supported in $\left\{x_1>\frac{1}{2}\right\}$ by setting $\phi_R(x_1,x_2)=\phi_L(1-x_1,x_2)$.

We set $u_L^1={\phi_L}_{|_{\Omega_{\varepsilon}}}$, $u_L^2={x_1\cdot \phi_L}_{|_{\Omega_{\varepsilon}}}$, $u_L^3={x_2\cdot \phi_L}_{|_{\Omega_{\varepsilon}}}$, $u_R^1={\phi_R}_{|_{\Omega_{\varepsilon}}}$, $u_R^2={x_1\cdot \phi_R}_{|_{\Omega_{\varepsilon}}}$, $u_R^3={x_2\cdot \phi_R}_{|_{\Omega_{\varepsilon}}}$. These functions are linearly independent and belong to $H^2(\Omega_{\varepsilon})$. Moreover, any $u$ in the space generated by $u_L^1,u_L^2,u_L^3,u_R^1,u_R^2,u_R^3$ with $\int_{\Omega_{\varepsilon}}u^2dv=1$ is easily seen to satisfy
$$
\int_{\Omega}|D^2u|^2dv\leq C\varepsilon,
$$
with $C$ independent of $\varepsilon$. This implies from \eqref{minmax} that $\mu_j^{\varepsilon}\leq C\varepsilon$ for $j\leq 6$. The proof is now complete in the case $n=2$ and $N=1$.

The proof for $n>2$ and $N>1$ is a standard adaptation of the arguments above.
\end{proof}

\begin{rem}
Let us consider a domain $\Omega_{\varepsilon,N}$ as in the proof of Theorem \ref{eucl}. Such a domain is usually called a {\it $N+1$-dumbbell}. We observe that if $N>n$, we have that $m_j,\mu_j\rightarrow 0$ as $\varepsilon\rightarrow 0^+$ for $n+2\leq j <N+2$. It is well-known that $m_1=0$ and $m_j=O(\varepsilon^{n+1})$ as $\varepsilon\rightarrow 0^+$ for $2\leq j\leq N+1$, see e.g., \cite{anne,jimbomorita}. Moreover, it is possible to show that in the case of a sufficiently regular $N+1$-dumbbell domain $\Omega_{\varepsilon,N}$, $\mu_j\rightarrow 0$ as $\varepsilon\rightarrow 0^+$ for $n+2\leq j\leq (N+1)(n+1)$, and  $\mu_{(N+1)(n+1)+1}$ is bounded away from zero, uniformly in $\varepsilon$. We refer to \cite{ferraresso} for the proof in the case $N=1$. Thanks to this fact, with the same arguments of \cite{anne} (see also \cite{jimbomorita}) it is possible to prove that $\mu_j=O(\varepsilon^{n-1})$ as $\varepsilon\rightarrow 0^+$ for $n+2\leq j\leq (N+1)(n+1)$. We omit the details of the computations which are standard but quite technical and go beyond the scopes of the present article. Anyway, we have that $\frac{\mu_j}{m_j^2}\rightarrow +\infty$ as $\varepsilon\rightarrow 0$ for all $n+2\leq j<N+2$. Thus in the Euclidean case dumbbell domains provide examples where either $\mu_j<m_j^2$ (for certain $j\in\mathbb N$) or  $\mu_j>m_j^2$ (for other $j\in\mathbb N$). 
\end{rem}





\subsection{Domains in manifolds with ${\rm Ric}\geq (n-1)K>0$}\label{few_pos}
Through all this subsection $(M,g)$ will be a complete $n$-dimensional smooth Riemannian manifold with ${\rm Ric}\geq (n-1)K>0$. We note that for any domain $\Omega$ of class $C^1$ of $M$ we have $\mu_1=0$ and $\mu_2>0$. In fact, from \eqref{minmax} we immediately deduce that $\mu_j\geq 0$ for all $j\in\mathbb N$ and that  $\mu_1=0$ is an eigenvalue with corresponding eigenfunctions the constant functions. Constant functions are the only eigenfunctions associated with $\mu_1=0$. In fact, any eigenfunction $u$ corresponding to the eigenvalue $\mu=0$ satisfies 
$$
\int_{\Omega}|D^2u|^2+{\rm Ric}(\nabla u\nabla u)dv=0,
$$
hence $|\nabla u|=0$, thus $u$ is a constant. This implies that $\mu_2>0$. 

Let us denote by
$$
0\leq\eta_1\leq\eta_2\leq\cdots\leq\eta_j\leq\cdots\nearrow+\infty
$$
the eigenvalues of the rough Laplacian on $\Omega$ with Neumann boundary conditions. They are characterized by 
\begin{equation}\label{minmax_rough_laplacian}
\eta_j=\min_{\substack{\mathcal W\subset \mathcal H^1(\Omega)\\{\rm dim}\mathcal W=j}}\max_{0\ne \omega\in \mathcal W}\frac{\int_{\Omega}|\nabla \omega|^2dv}{\int_{\Omega}\omega^2dv},
\end{equation}
where $\mathcal H^1(\Omega)$ is the space of $1$-forms of class $H^1(\Omega)$, see e.g., \cite{colbois_rough} for more information on the eigenvalues of the rough Laplacian. We have the following.

\begin{thm}\label{lower_pos}
 Let $(M,g)$ be a complete $n$-dimensional smooth Riemannian manifold with ${\rm Ric}\geq (n-1)K>0$ and let $\Omega$ be a bounded domain in $M$ with $C^1$ boundary. Then
$$
\mu_j\geq\left(\eta_1+(n-1)K\right)m_j
$$
for all $j\in\mathbb N$.
\end{thm}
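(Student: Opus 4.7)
My plan is to reduce the estimate to a pointwise (in the test function $u$) comparison between the Rayleigh quotient appearing in \eqref{minmax} and the one in \eqref{minmax_laplacian}, treating the Hessian contribution via the spectrum of the rough Laplacian on $1$-forms. The key observation is that for any $f\in H^2(\Omega)$ the differential $df$ belongs to $\mathcal H^1(\Omega)$, and its covariant derivative is precisely the Hessian of $f$, so that $|df|^2=|\nabla f|^2$ and $|\nabla(df)|^2=|D^2 f|^2$. Testing \eqref{minmax_rough_laplacian} on the one-dimensional subspace generated by $df$ therefore gives
$$\int_\Omega |D^2 f|^2\,dv\;\geq\;\eta_1\int_\Omega |\nabla f|^2\,dv\qquad\forall\,f\in H^2(\Omega).$$

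Combining this with the Ricci lower bound ${\rm Ric}(\nabla f,\nabla f)\geq (n-1)K|\nabla f|^2$, I obtain, for every non-zero $f\in H^2(\Omega)$, the Rayleigh-quotient comparison
$$\frac{\int_\Omega\bigl(|D^2 f|^2+{\rm Ric}(\nabla f,\nabla f)\bigr)dv}{\int_\Omega f^2\,dv}\;\geq\;\bigl(\eta_1+(n-1)K\bigr)\frac{\int_\Omega |\nabla f|^2\,dv}{\int_\Omega f^2\,dv}.$$
The conclusion would then follow by a standard min--max sandwich: for an arbitrary $j$-dimensional $U\subset H^2(\Omega)$, viewed as a $j$-dimensional subspace of $H^1(\Omega)$, the characterization \eqref{minmax_laplacian} forces
$$\max_{0\neq u\in U}\frac{\int_\Omega |\nabla u|^2\,dv}{\int_\Omega u^2\,dv}\;\geq\; m_j;$$
multiplying by the positive factor $\eta_1+(n-1)K$ and using the pointwise inequality above yields
$$\max_{0\neq u\in U}\frac{\int_\Omega\bigl(|D^2 u|^2+{\rm Ric}(\nabla u,\nabla u)\bigr)dv}{\int_\Omega u^2\,dv}\;\geq\;\bigl(\eta_1+(n-1)K\bigr)m_j,$$
and taking the infimum over $U$ closes the argument via \eqref{minmax}.

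The only delicate point I foresee is justifying that $\omega=df$ is an admissible test $1$-form in \eqref{minmax_rough_laplacian} and the identity $|\nabla(df)|=|D^2 f|$ for all $f\in H^2(\Omega)$; on smooth functions this is immediate (the Hessian being the covariant derivative of the exact $1$-form $df$), and by density of $C^\infty(\Omega)$ in $H^2(\Omega)$ it extends to the full Sobolev class. Note that Bochner's identity \eqref{bochner_f} is not invoked directly here: its relevant content for this estimate is already encoded spectrally through $\eta_1$.
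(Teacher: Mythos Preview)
Your proof is correct and follows essentially the same route as the paper: both arguments reduce to the pointwise Rayleigh-quotient comparison
\[
\frac{\int_\Omega\bigl(|D^2 u|^2+{\rm Ric}(\nabla u,\nabla u)\bigr)dv}{\int_\Omega u^2\,dv}\;\geq\;\bigl(\eta_1+(n-1)K\bigr)\frac{\int_\Omega |\nabla u|^2\,dv}{\int_\Omega u^2\,dv}
\]
via $\int_\Omega |D^2u|^2\,dv\geq\eta_1\int_\Omega|\nabla u|^2\,dv$ and the Ricci lower bound, and then close with the same min--max sandwich using $H^2(\Omega)\subset H^1(\Omega)$. Your write-up is in fact a bit more explicit than the paper's in justifying why $du$ is an admissible test $1$-form for \eqref{minmax_rough_laplacian} and why $|\nabla(du)|=|D^2u|$; the paper simply invokes $\int_\Omega|D^2u|^2/\int_\Omega|\nabla u|^2\geq\eta_1$ without comment.
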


\begin{proof}
The inequality is trivially true for $j=1$. Hence we assume $j\geq 2$. We observe that for any non-constant $u\in H^2(\Omega)$
\begin{multline}
\frac{\int_{\Omega}|D^2u|^2+{\rm Ric}(\nabla u,\nabla u)dv}{\int_{\Omega}u^2dv}=\frac{\int_{\Omega}|D^2u|^2dv}{\int_{\Omega}|\nabla u|^2dv}\cdot \frac{\int_{\Omega}|\nabla u|^2dv}{\int_{\Omega}u^2dv}+\frac{\int_{\Omega}{\rm Ric}(\nabla u,\nabla u)dv}{\int_{\Omega}u^2dv}\\
\geq \eta_1 \frac{\int_{\Omega}|\nabla u|^2dv}{\int_{\Omega}u^2dv}+(n-1)K\frac{\int_{\Omega}|\nabla u|^2dv}{\int_{\Omega}u^2dv}=\left(\eta_1+(n-1)K\right)\frac{\int_{\Omega}|\nabla u|^2dv}{\int_{\Omega}u^2dv}.
\end{multline}
Hence, for any subspace $U\subset H^2(M)$ of dimension $j\geq 2$ we have
$$
\max_{0\ne u\in U}\frac{\int_{\Omega}|D^2u|^2+{\rm Ric}(\nabla u,\nabla u)dv}{\int_{\Omega}u^2dv}\geq \left(\eta_1+(n-1)K\right)\max_{0\ne u\in U}\frac{\int_{\Omega}|\nabla u|^2dv}{\int_{\Omega}u^2dv}
$$

This implies
\begin{multline}
\mu_j=\min_{\substack{U\subset H^2(\Omega)\\{\rm dim}U=j}}\max_{0\ne u\in U}\frac{\int_{\Omega}|D^2u|^2+{\rm Ric}(\nabla u,\nabla u)dv}{\int_{\Omega}u^2dv}\\
\geq \left(\eta_1+(n-1)K\right)\min_{\substack{U\subset H^2(\Omega)\\{\rm dim}U=j}}\max_{0\ne u\in U}\frac{\int_{\Omega}|\nabla u|^2dv}{\int_{\Omega}u^2dv}\\
\geq \left(\eta_1+(n-1)K\right)\min_{\substack{U\subset H^1(\Omega)\\{\rm dim}U=j}}\max_{0\ne u\in U}\frac{\int_{\Omega}|\nabla u|^2dv}{\int_{\Omega}u^2dv}=\left(\eta_1+(n-1)K\right)m_j,
\end{multline}
where in the last inequality we have used the fact that $H^2(\Omega)\subset H^1(\Omega)$, hence the minimum decreases. The proof is now complete.
\end{proof}

\begin{rem}
Note that if $\Omega$ is a bounded domain with $II\geq 0$, we have that
$$
m_2\geq nK,
$$
with equality if and only if $\Omega$ is isometric to an $n$-dimensional Euclidean hemisphere of curvature $K$, see e.g., \cite{escobar}. This result is in the spirit of the well-known Obata-Lichnerowicz inequality, see \cite{chavel,Lichnerowicz,Obata}. Hence, for any bounded domain $\Omega$  with $II\geq 0$ we have from Theorem \ref{lower_pos} that
$$
\mu_2\geq (\eta_1+(n-1)K)nK.
$$
It is natural to conjecture that 
\begin{equation}\label{obata}
\mu_2\geq n^2K^2.
\end{equation}
\end{rem}
{\bf Open problem.} Prove \eqref{obata}.\\
\vskip .2pt

Thanks to Theorem \ref{lower_pos} we have the following inequality for all $j\in\mathbb N$, $j\geq 2$
\begin{equation}
\frac{\mu_j}{m_j^2}\geq\frac{\left(\eta_1+(n-1)K\right)}{m_j}.
\end{equation}
We recall now that for any $N\in\mathbb N$ there exists a sequence $\left\{\Omega_{\varepsilon,N}\right\}_{\varepsilon\in(0,\varepsilon_0)}$ such that $m_j\leq C\varepsilon$ for all $j\leq N$. These domains are obtained by connecting to a fixed domain $\Omega$, $N$ domains of fixed volume and disjoint from $\Omega$ with thin junctions, and by letting the width of the channels, represented by the parameter $\varepsilon>0$, go to zero. This is a standard construction (see \cite{anne,arrieta00}, see also Subsection \ref{sub_eucl}).

This implies the validity of the following theorem.
\begin{thm}
Let $(M,g)$ be a complete $n$-dimensional smooth Riemannian manifold with ${\rm Ric}\geq (n-1)K>0$. For all $N\in\mathbb N$ there exist a sequence $\left\{\Omega_{\varepsilon,N}\right\}_{\varepsilon\in(0,\varepsilon_0)}$ of domains such that
\begin{equation}
\lim_{\varepsilon\rightarrow 0^+}\frac{\mu_j}{m_j^2}=+\infty,
\end{equation}
for all $2\leq j\leq N$.
\end{thm}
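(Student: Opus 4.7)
The plan is to combine Theorem \ref{lower_pos} with a standard dumbbell construction adapted to $M$. By Theorem \ref{lower_pos}, for every bounded Lipschitz domain $\Omega \subset M$ and every $j \geq 2$ we have
\[
\frac{\mu_j(\Omega)}{m_j(\Omega)^2} \;\geq\; \frac{\eta_1(\Omega) + (n-1)K}{m_j(\Omega)} \;\geq\; \frac{(n-1)K}{m_j(\Omega)},
\]
since $\eta_1 \geq 0$. Because $K > 0$, the numerator is uniformly bounded below, so it is enough to construct a family $\{\Omega_{\varepsilon,N}\}$ of bounded Lipschitz domains in $M$ satisfying $m_j(\Omega_{\varepsilon,N}) \to 0$ as $\varepsilon \to 0^+$ for every $2 \leq j \leq N$.

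To build such a family I would imitate the Euclidean dumbbell construction referenced in Subsection \ref{sub_eucl}. Fix $N$ pairwise disjoint bounded Lipschitz domains $D_1, \dots, D_N$ of fixed positive volume inside $M$, together with short minimizing geodesic arcs joining them into a connected tree; let $T_i^\varepsilon$ denote a tubular $\varepsilon$-neighborhood of the $i$-th such arc and set
\[
\Omega_{\varepsilon,N} \;:=\; \bigcup_{i=1}^N D_i \;\cup\; \bigcup_{i=1}^{N-1} T_i^\varepsilon.
\]
The convergence arguments of \cite{anne, arrieta00} are local in nature and transplant to the Riemannian setting: as $\varepsilon \to 0^+$, the Neumann spectrum of $\Omega_{\varepsilon,N}$ converges to the disjoint union of the Neumann spectra of $D_1, \dots, D_N$ together with the discrete spectrum arising from the collapsing tubes. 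Since each $D_i$ contributes a zero eigenvalue (namely the constant function on that component), the limiting spectrum contains at least $N$ copies of $0$, so $m_j(\Omega_{\varepsilon,N}) \to 0$ for every $j \leq N$ ($m_1$ is always zero, while $m_2, \dots, m_N$ decay to zero).

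Plugging this into the first display yields $\mu_j(\Omega_{\varepsilon,N})/m_j(\Omega_{\varepsilon,N})^2 \geq (n-1)K/m_j(\Omega_{\varepsilon,N}) \to +\infty$ as $\varepsilon \to 0^+$ for every $2 \leq j \leq N$, which is the claim. The main technical point requiring care is verifying that dumbbell spectral collapse indeed extends to bounded Lipschitz domains inside a manifold with ${\rm Ric} \geq (n-1)K > 0$; I would handle this by placing the $D_i$'s and the connecting tubes inside a single normal coordinate chart chosen small enough that the induced metric is an arbitrarily small perturbation of the Euclidean one, thereby reducing the argument to the Euclidean case with controlled error terms.
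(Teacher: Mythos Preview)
Your proposal is correct and follows essentially the same approach as the paper: both arguments combine the lower bound $\mu_j/m_j^2 \geq (\eta_1 + (n-1)K)/m_j$ from Theorem~\ref{lower_pos} with a dumbbell construction (citing \cite{anne,arrieta00}) to force $m_j \to 0$ for $2 \leq j \leq N$. The paper is terser (it simply recalls that such domains exist), while you supply more construction detail; one small point to align with the paper is that Theorem~\ref{lower_pos} is stated for $C^1$ domains, so you should smooth your dumbbell rather than leave it merely Lipschitz.
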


On the other hand, if $\Omega$ is such that the second fundamental form of its boundary is non-negative, that is, $II\geq 0$, we have that
\begin{equation}\label{convex_ratio}
\frac{\mu_j}{m_j^2}\leq 1.
\end{equation}
We refer to Subsection \ref{sub_convex} for the proof of \eqref{convex_ratio}. 





\subsection{Domains of the hyperbolic space}\label{neg_eig}

Given an eigenvalue $\mu$ of \eqref{variational_0} and a corresponding eigenfunction $u_{\mu}\in H^2(\Omega)$, we have
$$
\mu=\frac{\int_{\Omega}|D^2u_{\mu}|^2+{\rm Ric}(\nabla u_{\mu},\nabla u_{\mu})dv}{\int_{\Omega}u_{\mu}^2dv}.
$$
It is well-known, from Bochner's formula \eqref{bochner_f} and integration by parts, that, for any $u\in H^2_0(\Omega)$
$$
\int_{\Omega}|D^2u|^2+{\rm Ric}(\nabla u,\nabla u)dv=\int_{\Omega}(\Delta u)^2dv\geq 0.
$$
Note that for a complete, compact (boundaryless) smooth Riemannian manifold $M$  
$$
\int_{M}|D^2u|^2+{\rm Ric}(\nabla u,\nabla u)dv=\int_{M}(\Delta u)^2dv\geq 0,
$$
for all $u\in H^2(M)$.

In view of this,  the  natural question arises whether the biharmonic Neumann eigenvalues $\mu_j$ can be negative or not. Clearly, a necessary condition for the appearance of negative eigenvalues is that ${\rm Ric}\not\geq 0$. In this section we consider domains of the standard hyperbolic space $\mathbb H^n$. We have the following theorem.

\begin{thm}\label{negative_n}
Let $\Omega$ be a bounded domain of the hyperbolic space $\mathbb H^n$ with $C^1$ boundary. Then $\Omega$ admits at least $n$ strictly negative eigenvalues.
\end{thm}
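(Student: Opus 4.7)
The plan is to apply the min--max formula \eqref{minmax} and exhibit an $n$-dimensional subspace $V\subset H^2(\Omega)$ on which the form
\[
q(u):=\int_\Omega\bigl(|D^2u|^2-(n-1)|\nabla u|^2\bigr)\,dv = \int_\Omega\bigl(|D^2u|^2 + {\rm Ric}(\nabla u,\nabla u)\bigr)\,dv
\]
(using ${\rm Ric}=-(n-1)g$ on $\mathbb H^n$) is strictly negative, which forces $\mu_n<0$. The essential pointwise input comes from signed hyperbolic distances to totally geodesic hyperplanes: realize $\mathbb H^n$ as $\{p\in\mathbb R^{n,1}:\langle p,p\rangle=-1,\ p_0>0\}$, and for a unit spacelike $a\in\mathbb R^{n,1}$ put $f_a(p):=\langle a,p\rangle$. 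Using the Gauss relation $\tilde\nabla_X Y=\nabla_X Y+\langle X,Y\rangle p$ for the ambient Minkowski connection one obtains $D^2f_a=f_a\,g$ and $|\nabla f_a|^2=1+f_a^2$. The composition $u_a:=\operatorname{arcsinh}(f_a)$ is the signed hyperbolic distance from $p$ to the hyperplane $H_a=\{q\in\mathbb H^n:\langle a,q\rangle=0\}$, hence $|\nabla u_a|\equiv 1$; a short calculation gives $\Delta u_a=(n-1)\tanh(u_a)$, and Bochner's formula then yields $|D^2u_a|^2=(n-1)\tanh^2(u_a)$. Consequently
\[
|D^2u_a|^2-(n-1)|\nabla u_a|^2=-(n-1)\operatorname{sech}^2(u_a)<0
\]
pointwise on $\Omega$, so $q(u_a)<0$ for every totally geodesic hyperplane (which already yields $\mu_1<0$).

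To build the $n$-dimensional negative subspace, fix a base point $p^*\in\mathbb H^n$ (identified with a timelike unit $e_0\in\mathbb R^{n,1}$), choose a Minkowski-orthonormal basis $a_1,\ldots,a_n$ of $T_{p^*}\mathbb H^n=e_0^\perp$, and set $V=\operatorname{span}(u_1,\ldots,u_n)$ with $u_i=u_{a_i}$. A direct computation using $\langle\nabla f_i,\nabla f_j\rangle=\delta_{ij}+f_if_j$ and the explicit Hessian
\[
D^2u_i=\frac{f_i}{\sqrt{1+f_i^2}}\,g-\frac{f_i}{(1+f_i^2)^{3/2}}\,\nabla f_i\otimes\nabla f_i
\]
reduces the Gram matrix of $q$ on $V$ to
\[
A_{ii}=-(n-1)\int_\Omega\frac{dv}{1+f_i^2},\qquad A_{ij}=-\int_\Omega\frac{f_if_j(1+f_i^2+f_j^2)}{\bigl[(1+f_i^2)(1+f_j^2)\bigr]^{3/2}}\,dv\quad(i\neq j),
\]
so the diagonal entries are manifestly strictly negative. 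Choosing $p^*$ to be the hyperbolic centroid of $\Omega$ (i.e.\ $p^*=\bar p/\sqrt{-\langle\bar p,\bar p\rangle}$ with $\bar p=\int_\Omega p\,dv\in\mathbb R^{n,1}$) makes $\int_\Omega f_i\,dv=0$ for all $i$, and then rotating $(a_i)$ inside $T_{p^*}\mathbb H^n$ to diagonalize the second-moment matrix $(\int_\Omega f_if_j\,dv)$, together with pointwise estimates of the off-diagonal integrands against the diagonal ones, gives that $A$ is negative definite; the conclusion $\mu_n<0$ then follows from min--max.

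The main obstacle is exactly this last negative-definiteness step: the pointwise computation alone gives only $\operatorname{tr}(A)<0$ and hence a single guaranteed negative eigenvalue of $A$, whereas we need all $n$. Upgrading the trace bound to full negative definiteness for an arbitrary bounded $\Omega$---in particular when the extent of $\Omega$ in $\mathbb H^n$ is much larger than $\operatorname{arcsinh}\sqrt{n-1}$ so that the nonlinear factors in $A_{ij}$ cannot simply be linearized---requires both the centroid-plus-principal-axes normalization and a quantitative comparison of the cross-term integrals with the diagonal ones; this is the technical crux of the argument.
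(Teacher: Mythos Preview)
Your pointwise computation is correct and matches the paper's: each signed distance $u_a$ to a totally geodesic hyperplane satisfies $|D^2u_a|^2-(n-1)|\nabla u_a|^2<0$ everywhere, so $\mu_1<0$. The gap is precisely where you locate it yourself. You need the Gram matrix $A$ of $q$ on $V=\operatorname{span}(u_{a_1},\ldots,u_{a_n})$ to be negative definite, and you have only shown $\operatorname{tr}A<0$. The ``centroid plus principal axes plus pointwise comparison'' programme you sketch is not carried out, and it is not clear that it can be: linear combinations $\sum c_iu_{a_i}$ are \emph{not} signed distances to hyperplanes (arcsinh is nonlinear), so the pointwise negativity $|D^2u|^2-(n-1)|\nabla u|^2<0$ is lost on $V$, and there is no obvious mechanism forcing the off-diagonal integrals to be dominated by the diagonal ones for an arbitrary bounded $\Omega$. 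As written, the proposal proves only $\mu_1<0$.

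The paper avoids this difficulty entirely by an inductive Borsuk--Ulam argument rather than by fixing an $n$-dimensional subspace in advance. Suppose $\mu_1,\ldots,\mu_k<0$ with eigenfunctions $u_1,\ldots,u_k$ and $k<n$. Parametrize totally geodesic hyperplanes through a fixed point $q\in\Omega$ by their unit normals $\theta\in\mathbb S^k\subset T_q\mathbb H^n$ (a $(k{+}1)$-plane suffices), and let $f_\theta$ be the signed distance to the corresponding hyperplane. Then $f_{-\theta}=-f_\theta$, so the map
\[
\rho_k:\mathbb S^k\to\mathbb R^k,\qquad \rho_k(\theta)=\Bigl(\int_\Omega f_\theta u_1\,dv,\ldots,\int_\Omega f_\theta u_k\,dv\Bigr)
\]
is odd and continuous; Borsuk--Ulam gives a $\theta_0$ with $\rho_k(\theta_0)=0$. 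Since $f_{\theta_0}$ is itself a signed distance, your pointwise computation applies and its Rayleigh quotient is strictly negative; being $L^2$-orthogonal to $u_1,\ldots,u_k$, this forces $\mu_{k+1}<0$. The point is that at every step one tests with a \emph{single} signed-distance function, so pointwise negativity is never lost---this is exactly what your fixed-subspace approach sacrifices.
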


\begin{proof}
We start by proving the result for $n=2$. To do so, we will use Fermi coordinates for $\mathbb H^2$. In this case the metric is given by
$$
g(x,y)=dx^2+\cosh^2(x)dy^2.
$$
The Christoffel symbols are 
$$
\Gamma_{1,1}^1=\Gamma_{1,1}^2=\Gamma_{1,2}^1=\Gamma_{2,2}^2=0\,,\ \ \ \Gamma_{1,2}^2=\tanh(x),\,\ \ \ \Gamma_{2,2}^1=-2\cosh(x)\sinh(x).
$$
For any smooth function $u(x,y)$ we have
$$
\nabla u=u_xdx+u_ydy,
$$
hence
$$
|\nabla u|^2=u_x^2+\frac{u_y^2}{\cosh(x)^2}.
$$
Moreover,
\begin{multline*}
D^2u=u_{xx} dx\otimes dx+\left(u_{xy}-\tanh(x)u_y\right)(dx\otimes dy+dy\otimes dx)\\
+(u_{yy}+\cosh(x)\sinh(x)u_x)dy\otimes dy,
\end{multline*}
therefore
$$
|D^2u|^2=u_{xx}^2+\frac{2\left(u_{xy}-\tanh(x)u_y\right)^2}{\cosh(x)^2}+\frac{(u_{yy}+\cosh(x)\sinh(x)u_x)^2}{\cosh(x)^4}
$$
A natural test function for the Rayleigh quotient in \eqref{minmax} is $\tilde u(x,y)=x$, which is the signed distance from the geodesic $x=0$. We have then
$$
|D^2u|^2+{\rm Ric}(\nabla u,\nabla u)=|D^2u|^2-|\nabla u|^2=\tanh(x)^2-1<0.
$$
This implies that
$$
\mu_1=\min_{u\in H^2(\Omega)}\frac{\int_{\Omega}|D^2u|^2+{\rm Ric}(\nabla u,\nabla u)dv}{\int_{\Omega}u^2}\leq\frac{\int_{\Omega}|D^2\tilde u|^2+{\rm Ric}(\nabla \tilde u,\nabla \tilde u)dv}{\int_{\Omega}\tilde u^2}<0.
$$
Thus, $\mu_1<0$. Now, fixed a domain $\Omega$, let $\mu_1<0$ be the first  eigenvalue of \eqref{variational_0} and let $u_1$ be an associated eigenfunction. Let $p\in\Omega$, $v\ne 0$ be a vector in $T_p\mathbb H^2$, and let $\left\{\gamma_{\theta}\right\}_{\theta\in[0,2\pi)}$ be the family of geodesics with $\gamma_{\theta}(0)=p$ and with the angle between $\gamma_{\theta}'(0)$ and $v$ equals to $\theta$. Let $h_{\theta}$ be the signed distance to $\gamma_{\theta}$. For all $\theta\in[0,2\pi)$, $|D^2h_{\theta}|^2+{\rm Ric}(\nabla h_{\theta},\nabla h_{\theta})<0$. Indeed we can perform the same computations above in a new system of Fermi coordinates where $x=h_{\theta}$. Moreover, we have that $h_{\pi}=-h_{0}$.\\
Let us consider the function $\theta\mapsto\rho(\theta)$ defined by $\rho(\theta):=\int_{\Omega}h_{\theta}u_1dv$. The function $\rho$ is continuous and satisfies $\rho(0)=-\rho(\pi)$. Therefore, there exists $0\leq\theta_0\leq\pi$ with $\rho(\theta_0)=0$. Hence, there exists at least one function $h_{\theta}$ with strictly negative Rayleigh quotient and orthogonal to $u_1$. From \eqref{minmax} we deduce that
$$
\mu_2=\min_{\substack{0\ne u\in H^2(\Omega)\\\int_{\Omega}uu_1dv=0}}\frac{|D^2u|^2+{\rm Ric}(\nabla u,\nabla u)dv}{u^2dv}\leq \frac{|D^2h_{\theta}|^2+{\rm Ric}(\nabla h_{\theta},\nabla h_{\theta})dv}{h_{\theta}^2dv}<0.
$$
This proves the existence of a second strictly negative eigenvalue. This concludes the proof in the case $n=2$.

The proof for $n\geq 3$ is similar, however we shall highlight only the main differences, omitting the standard but quite long analogous computations. 

The main tool in order to prove the result for $n\geq 3$ is the Borsuk-Ulam Theorem which states that if $g:\mathbb S^n\rightarrow \mathbb R^n$ is an odd function (that is, $g(p)=-g(-p)$ where $-p$ is the antipodal point to $p$ in $\mathbb S^n$), then there exists $p\in\mathbb S^n$ such that $g(p)=0$.

Let $q\in\Omega$ and let $\mathcal H_1$ be an hyperplane containing $q$. Let $f_1$ be the signed distance from $\mathcal H_1$. Then
$$
|D^2f_1|^2+{\rm Ric}(\nabla f_1,\nabla f_1)dv<0.
$$
The proof is analogous to that of the case $n=2$. It follows by explicit computations in Fermi coordinates $(x_1,...,x_n)$ where $x_1$ represents the signed distance from $\mathcal H_1$ and $(x_2,...,x_n)$ are normal coordinates on $\mathcal H_1=\mathbb H^{n-1}$ (in this system $q=(0,...,0)$). Therefore $\mu_1<0$ with $u_1$ a corresponding eigenfunction. 

Let $\pi_2$ be a fixed plane in $T_q\mathbb H^n$ and let $v_1\in\pi_2$ be a non-zero vector. For $\theta\in[0,2\pi)$, let $\mathcal H_{\theta}$ be the hyperplane whose tangent space at $q$ is normal to the vector $v_{\theta}\in\pi_2$, where $v_{\theta}$ is a unit vector which forms with $v_1$ an angle of width $\theta$ in $\pi_1$. Let $f_{\theta}$ be the signed distance from $\mathcal H_{\theta}$. The Rayleigh quotient of this function is again strictly negative. Moreover, we have that $f_{0}=-f_{\pi}$ and if we define $\rho_1(\theta):=\int_{\Omega}f_{\theta}u_1dv$, we find out that there exists $\theta\in[0,2\pi)$ such that $\rho_1(\theta)=0$. Thus we deduce the existence of a function with strictly negative Rayleigh quotient orthogonal to $u_1$. As in the case $n=2$, we deduce that $\mu_2<0$. Assume now that we have $\mu_1,...,\mu_k<0$, with $k<n$, and with associated eigenfunctions $u_1,...,u_k$. We prove that $\mu_{k+1}<0$. 

Let $\pi_{k+1}$ be a fixed $k+1$-dimensional subspace of $T_q\mathbb H^n$ and let $v_k$ be a non-zero vector in $\pi_{k+1}$. Let, for $\theta=(\theta_1,...,\theta_{k})\in\mathbb S^k$, $v_{\theta}$ be a vector in $\pi_{k+1}$ forming a directional angle $\theta=(\theta_1,...,\theta_{k})$ with $v_k$. Let $\mathcal H_{\theta}$ be the hyperplane whose tangent space at $q$ is normal to the vector $v_{\theta}\in\pi_{k+1}$. Let $f_{\theta}$ be the signed distance from $\mathcal H_{\theta}$. The Rayleigh quotient of this function is strictly negative. Moreover, we have that $f_{\theta}=-f_{-\theta}$ for all $\theta\in\mathbb S^k$. We define now $\rho_k:\mathbb S^k\rightarrow\mathbb S^k$ by $\rho_k(\theta):=(\int_{\Omega}f_{\theta}u_1dv,...,\int_{\Omega}f_{\theta}u_kdv)$. By construction $\rho_k$ is continuous and odd, hence there exists $\theta\in\mathbb S^k$ such that $\rho_k(\theta)=0$, hence  $\int_{\Omega}f_{\theta}u_1dv=\cdots=\int_{\Omega}f_{\theta}u_kdv=0$. Therefore there exists a function in $H^2(\Omega)$ with strictly negative Rayleigh quotient and orthogonal to $u_1,...,u_k$. From \eqref{minmax} we deduce that $\mu_{k+1}<0$. 

This concludes the proof.
\end{proof}




\begin{rem}\label{rem_rough}
We can give an upper bound on the number of negative eigenvalues of $\Omega$ in terms of the number of eigenvalues of the rough Laplacian smaller than one. Indeed, if we have $\mu_1,...,\mu_N$ negative eigenvalues with corresponding eigenfunctions $u_1,...,u_N$, then any $u=\sum_{i=1}^N\alpha_i u_i$ is such that
$$
\int_{\Omega}|D^2u|^2+{\rm Ric}(\nabla u,\nabla u)dv=\int_{\Omega}|D^2u|^2-|\nabla u|^2dv<0.
$$ 
\end{rem}

To end this section we show that there exists domains with an arbitrary number of arbitrarily large (in absolute value) negative eigenvalues. 

\begin{thm}\label{BIG_negative}
For any $N\in\mathbb N$ and $M>0$ there exist a bounded domain $\Omega_{M,N}$ of the hyperbolic space $\mathbb H^n$ with $|\Omega_{M,N}|=1$ and such that
$$
\mu_j\leq -M,
$$
for all $j\leq N$.
\end{thm}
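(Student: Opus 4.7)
The plan is to construct $\Omega_{M,N}$ as the union of $N$ far-apart thin ``slabs'' in $\mathbb{H}^n$ joined to a central hub by thin tubular ``arms'', and on each slab to exhibit a localized test function whose Rayleigh quotient is at most $-M$. Since these $N$ functions will be pairwise disjointly supported, the min-max characterization \eqref{minmax} then yields $\mu_j(\Omega_{M,N}) \leq -M$ for every $j \leq N$.

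The key local estimate is as follows. Fix a totally geodesic hyperplane $\mathcal H \subset \mathbb{H}^n$ and use Fermi coordinates along $\mathcal H$, in which the metric reads $dy^2 + \cosh^2(y)\,g_{\mathbb H^{n-1}}$ with $y$ the signed distance to $\mathcal H$. For $u=y$, direct computation gives $|D^2 u|^2 = (n-1)\tanh^2(y)$ and $|\nabla u|^2 = 1$, so the integrand of the numerator of the Rayleigh quotient equals $-(n-1)\mathrm{sech}^2(y)$. On a slab $S_a := \{|y|<a\} \times K$ with $K \subset \mathbb H^{n-1}$, the factor $|K|$ cancels and a small-$a$ expansion yields
\[ R(y) \;=\; \frac{-(n-1)\int_{-a}^a \cosh^{n-3}(y)\,dy}{\int_{-a}^a y^2\cosh^{n-1}(y)\,dy} \;\sim\; -\frac{3(n-1)}{a^2}, \qquad a \to 0^+. \]
Hence for $a$ small enough, $R(y) \leq -M$ on $S_a$ independently of the choice of $K$.

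To assemble $\Omega_{M,N}$, fix a point $p \in \mathbb H^n$ and $N$ well-separated unit vectors $v_1,\ldots,v_N \in T_p \mathbb H^n$. Let $\gamma_k$ be the geodesic from $p$ in the direction $v_k$, set $p_k := \gamma_k(D)$ for $D$ large enough, and let $\mathcal H_k$ be the totally geodesic hyperplane through $p_k$ perpendicular to $\gamma_k$. Let $S_k$ be a slab of the above form around $\mathcal H_k$, localized near $p_k$, and join $S_k$ to a small hub $B_\varepsilon(p)$ by a thin Fermi tube $\mathrm{arm}_k$ of radius $\rho_T$ around $\gamma_k$. For $\rho_T$ small compared to the angular separation of the $v_k$, the arms are pairwise disjoint outside the hub, and the parameters $|K_k|, \rho_T, \varepsilon$ can be chosen so that $|\Omega_{M,N}| = 1$ while the slabs carry the bulk of the volume.

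On each $S_k$ set $u_k := y_k$ (the signed distance to $\mathcal H_k$); on $\mathrm{arm}_k$ set $u_k(y_k,\cdot) := y_k\,\eta(y_k)$, where $\eta$ is a smooth cutoff on the arm's axial interval with $\eta=1$ and $\eta'=0$ at the slab end and $\eta=0$ and $\eta'=0$ at the hub end; set $u_k \equiv 0$ on the hub and on every other arm and slab. The matching conditions ensure $u_k$ is $C^1$ across each interface, hence $u_k \in H^2(\Omega_{M,N})$, and the supports of $u_1,\ldots,u_N$ are pairwise disjoint. The Rayleigh quotient of $u_k$ decomposes as $R(u_k) = (A^{\mathrm{slab}} + A^{\mathrm{arm}})/(B^{\mathrm{slab}} + B^{\mathrm{arm}})$; the slab term already satisfies $A^{\mathrm{slab}}/B^{\mathrm{slab}} \leq -M$ by the local estimate. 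On the arm, $u_k$ depends only on $y_k$, and since the cross-sectional $(n-1)$-volume equals the constant $\omega_{n-2}\rho_T^{n-1}/(n-1)$, both $A^{\mathrm{arm}}$ and $B^{\mathrm{arm}}$ are proportional to $\rho_T^{n-1}$; the principal technical step is the quantitative comparison showing that $\rho_T$ can be taken small enough that the arm contribution becomes negligible compared to the slab's, giving $R(u_k) \leq -M$. Since the $u_k$ have disjoint supports, any $u = \sum_k c_k u_k$ satisfies $R(u) \leq \max_k R(u_k) \leq -M$; invoking \eqref{minmax} on the $N$-dimensional span of $\{u_k\}$ then gives $\mu_j(\Omega_{M,N}) \leq -M$ for every $j \leq N$.
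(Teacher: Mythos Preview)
Your proof is essentially correct and reaches the same conclusion, but the construction is genuinely different from the paper's, and it is worth noting the contrast.

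The paper works with a \emph{single} thin tube $D_{\delta,L}$ of radius $\delta$ around a geodesic segment of length $L$ in $\mathbb H^n$. In Fermi coordinates $(x_1,\dots,x_n)$ along the geodesic, the transverse coordinate $x_k$ (for some $k\geq 2$) plays the role your $y_k$ plays: since the Christoffel symbols vanish on the geodesic, $|D^2 x_k|^2=O(\varepsilon)$ on the tube while $\mathrm{Ric}(\nabla x_k,\nabla x_k)\approx -(n-1)$, and $\int x_k^2\lesssim \delta^2|D_{\delta,L}|$, giving Rayleigh quotient $\lesssim -C/\delta^2$. For $N>1$ the paper simply lengthens the tube to $D_{\delta,NL}$ and cuts it into $N$ pieces by disjoint axial cutoffs $\phi_i$, using $u_i=x_k\phi_i$. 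The volume is normalized by taking $NL\sim\delta^{-(n-1)}$.

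Your construction instead places $N$ thin \emph{slabs} around totally geodesic hyperplanes and achieves disjoint supports by physical separation, which then forces you to reconnect everything through a hub and thin arms to keep $\Omega_{M,N}$ connected. The local estimate on a slab (your $R(y)\sim -3(n-1)/a^2$) is the exact analogue of the paper's $-C/\delta^2$ estimate, and your asymptotic and Hessian computation $|D^2 y|^2=(n-1)\tanh^2 y$ are both correct. The claim that the arm contributions are $O(\rho_T^{n-1})$ and hence negligible is right: with $F(y)=y\,\eta(y)$ one has $|F''|\lesssim 1/D$, $|F'|\lesssim 1$, and $|\mathrm{arm}_k|\lesssim D\rho_T^{n-1}$, so $|A^{\mathrm{arm}}|+B^{\mathrm{arm}}\to 0$ as $\rho_T\to 0$ with all other parameters fixed. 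Two small points to tidy: the cross-sectional volume of a hyperbolic tube is constant along the geodesic but is not literally the Euclidean $\omega_{n-2}\rho_T^{n-1}/(n-1)$ (only asymptotically for small $\rho_T$), and the slab/arm/hub junctions must be smoothed so that $\Omega_{M,N}$ is $C^1$; neither affects the estimates. The paper's single-tube approach is more economical since it avoids the hub-and-arm gluing altogether, but your construction is equally valid.
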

\begin{proof}
We start by proving the theorem with $N=1$. Let $\gamma$ be a simple geodesic in $\mathbb H^n$ and let $\gamma_{\delta}$ a $\delta$-neighborhood of $\gamma$, that is $\gamma_{\delta}:=\left\{p\in \mathbb H^n:{\rm dist}(p,\gamma)<\delta\right\}$. In $\gamma_{\delta}$ we consider Fermi coordinates $(x_1,...,x_n)$, where $(x_1,0,...,0)$ correspond to the points on $\gamma$ and $(0,x_2,...,x_n)$ correspond to a normal neighborhood of $0$. Moreover, $g_{ij}(p)=\delta_{ij}$ and $\frac{\partial g_{ij}}{\partial x_k}(p)=0=\Gamma_{ij}^k(p)$ for all $p\in\gamma$.

Given $\varepsilon>0$, we find $\delta>0$ such that $|g_{ij}(p)-\delta_{ij}|<\varepsilon$, $\left|\frac{\partial g_{ij}}{\partial x_k}(p)\right|<\varepsilon$ and $|\Gamma_{ij}^k(p)|<\varepsilon$ for all $p\in\gamma_{\delta}$.

On the domain $D_{\delta,L}:=\left\{p\in\gamma_{\delta}:0<x_1<L\right\}$ we consider $n$ test functions $x_i$, $i=2,...,n$. We have, for all $i=2,...n$
$$
|D^2x_i|^2=|\nabla dx_i|^2\leq C'\varepsilon
$$
for some $C'>0$ independent of $\varepsilon$, and
$$
{\rm Ric}(\nabla x_i,\nabla x_i)=-1.
$$
Therefore
$$
\int_{D_{\delta,L}}|D^2x_i|^2+{\rm Ric}(\nabla x_i,\nabla x_i)dv\leq (C\varepsilon-1)L\delta^{n-1},
$$
while
$$
\int_{D_{\delta,L}}x_i^2dv\geq C''L\delta^{n-1}\delta^2,
$$
for some constant $C''>0$ independent of $\varepsilon,\delta$, since in $\gamma_{\delta}\setminus\gamma_{\delta/2}$, $|x_i|^2\geq\frac{\delta^2}{4}$. By choosing $\varepsilon>0$ sufficiently small, we conclude that
$$
\mu_1\leq -\frac{C}{\delta^2}.
$$
Moreover, by choosing $L=\frac{1}{\delta^{n-1}}$ we have that $|D_{\delta,L}|=O(1)$ as $\delta\rightarrow 0^+$. This proves the statement for $N=1$.

Let $N\in\mathbb N$ be fixed. Consider the domain $D_{\delta,NL}$. Let the points $p_1,...,p_N\in\gamma$ be given by $p_i=\left(L\left(i-\frac{1}{2}\right),0,...,0\right)$ for $i=1,...,N$ and let $B_i:=B\left(p_i,\frac{L}{4}\right)$ and $2B_i:=B\left(p_i,\frac{L}{2}\right)$. The balls $2B_i$ are disjoint. Associated with each $2B_i$ we define cut-off functions $\phi_i$ by
$$\phi_i(p)=
\begin{cases}
1\,, & {\rm if\ }p\in B_i,\\
4\left(\frac{2\,{\rm dist}(p,p_i)}{L}-1\right)^2\left(4\frac{2\,{\rm dist}(p,p_i)}{L}-1\right)\,, & {\rm if\ }p\in 2B_i\setminus\overline B_i,\\
0\,, & {\rm if\ }p\in \mathbb H^n\setminus\overline 2B_i.
\end{cases}
$$
The functions $u_i:=x_k\phi_i$, $i=1,...N$ (for some $k=2,...,n$) are $N$ disjointly supported functions in $H^2(D_{\delta,L})$, hence from \eqref{minmax} we deduce that
$$
\mu_N\leq\max_{i=1,...,N}\frac{\int_{D_{\delta,NL}}|D^2u_i|^2+{\rm Ric}(\nabla u_i,\nabla u_i)dv}{\int_{D_{\delta,NL}}u_i^2dv}.
$$
The same computations above show that the Rayleigh quotient of each $u_i$ is bounded above by $-\frac{C}{\delta^2}$, hence we have $N$ negative eigenvalues with arbitrary large absolute value. By choosing $NL=\frac{1}{\delta^{n-1}}$ we have also that $|D_{\delta,NL}|=O(1)$ as $\delta\rightarrow 0^+$. This concludes the proof.
\end{proof}

\begin{rem}
A consequence of Theorem \ref{BIG_negative} is that we can always locally perturb a fixed domain of the hyperbolic space $\mathbb H^n$ in order to obtain an arbitrary large number of negative eigenvalues with arbitrary large absolute value. This is done exactly as for the Neumann Laplacian, in which case we can deform locally a domain in order to have an arbitrary large number of eigenvalues arbitrarily close to zero. Indeed, it is sufficient to join to the domain a sufficient number of small balls by sufficiently thin junctions.
\end{rem}

\begin{rem}
A natural problem is to find classes of domains of the hyperbolic space which have exactly $n$ negative eigenvalues. A first immediate conjecture is that hyperbolic balls admit exactly $n$ negative eigenvalues. However, a simple proof of this fact is currently unavailable and we leave this as an open question. Nevertheless, it is possible to prove the result in the case of sufficiently small balls. In fact, from Remark \ref{rem_rough} we deduce that if $\eta_{n+1}>1$, where $\eta_{n+1}$ is the $n+1$-th eigenvalue of the rough Laplacian on $\Omega$, then $\Omega$ admits exactly $n$ negative eigenvalues. Using normal coordinates with origin in the center of a ball $B_{\varepsilon}$ of radius $\varepsilon$ it is possible to prove by means of explicit computations and max-min formula for the eigenvalues that $\eta_{n+1}\rightarrow+\infty$ as $\varepsilon\rightarrow 0^+$. Thus we deduce the existence of $\varepsilon_0>0$ such that every ball of radius smaller than $\varepsilon_0$ admits exactly $n$ negative eigenvalues.
\end{rem}

{\bf Open problem.} Prove that hyperbolic balls admit exactly $n$ negative eigenvalues. 




\section{Upper estimates for eigenvalues}\label{bounds}

In this section we provide upper bounds for the eigenvalues $\mu_j$ of \eqref{variational_0} which are compatible with Weyl's law \eqref{weyl}. As we have highlighted in Section \ref{afew}, there is in general no monotonicity between the eigenvalues $\mu_j$ and the squares of the Neumann eigenvalues of the Laplacian $m_j$, hence in general there is no hope to recover upper bounds for $\mu_j$ from the known upper bounds on $m_j$.

We remark that the situation is very different if we think of Dirichlet eigenvalues of the Laplacian and the biharmonic operator. Indeed, if we denote by $\lambda_j$ and by $\Lambda_j$ the eigenvalues of the Laplace and biharmonic operator respectively, with Dirichlet boundary conditions on a domain $\Omega$ of a complete $n$-dimensional smooth Riemannian manifold, then
$$
\lambda_j^2\leq\Lambda_j,
$$
for all $j\in\mathbb N$. This is an immediate consequence of the min-max principle for $\lambda_j$ and $\Lambda_j$. Hence, lower bounds for $\Lambda_j$ can be obtained from lower bounds on $\lambda_j$.




\subsection{Decomposition of a metric measure space by capacitors}\label{sub_metric}

In this subsection we present the main technical tools which will be used to prove upper bounds for eigenvalues. We start with some definitions. 

We denote by $(X,{\rm dist},\varsigma)$ a metric measure space with a metric ${\rm dist}$ and a Borel measure $\varsigma$. We will call {\it capacitor} every couple $(A,D)$ of Borel sets of $X$ such that $A\subset D$. By an annulus in $X$ we mean any set $A\subset X$ of the form
\begin{equation*}
A=A(a,r,R)=\left\{x\in X:r<{\rm dist}(x,a)<R\right\},
\end{equation*}
where $a\in X$ and $0\leq r<R<+\infty$. By $2A$ we denote 
\begin{equation*}
2A=2A(a,r,R)=\left\{x\in X:\frac{r}{2}<{\rm dist}(x,a)<2R\right\}.
\end{equation*}
Moreover, for any $F\subset X$ and $r>0$ we denote the $r$-neighborhood of $F$ by $F^r$, namely
$$
F^r:=\left\{x\in X:{\rm dist}(x,F)<r\right\}.
$$

We recall the following metric construction of disjoint capacitors from \cite{gny}.

\begin{thm}[{\cite[Theorem 1.1]{gny}}]\label{gny}
Let $(X,{\rm dist},\varsigma)$ be a metric-measure space with $\varsigma$ a non-atomic finite  Borel measure. Assume that the following properties are satisfied:
\begin{enumerate}[i)]
\item there exists a constant $\Gamma$ such that any metric ball of radius $r$ can be covered by at most $\Gamma$ balls of radius $\frac{r}{2}$;
\item all metric balls in $X$ are precompact sets.
\end{enumerate}
Then for any integer $j$ there exists a sequence $\left\{A_i\right\}_{i=1}^j$ of $j$ annuli in $X$ such that, for any $i=1,...,j$
\begin{equation*}
\varsigma(A_i)\geq c\frac{\varsigma(X)}{j},
\end{equation*}
and the annuli $2A_i$ are pairwise disjoint. The constant $c$ depends only on the constant $\Gamma$ in i).
\end{thm}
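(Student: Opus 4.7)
Plan: I would prove Theorem~\ref{gny} by an extraction argument that combines a one-scale (``single annulus'') lemma with a recursive exhaustion procedure, using the metric doubling hypothesis (i) at each stage to keep track of disjointness. The precompactness in (ii) is used only to ensure that annuli are Borel and that the sup/inf in the covering arguments are realized.

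First, normalize so that $\varsigma(X)=1$ and fix the target integer $j$. The aim is to produce annuli $\{A_i\}_{i=1}^j$ with $\varsigma(A_i)\ge c/j$ and the $2A_i$ pairwise disjoint. The one-scale lemma I would establish first is: for any ball $B=B(x_0,R_0)\subset X$ with $\varsigma(B)\ge 2/j$, there exist a point $a\in B$ and radii $0\le r<R$ with $B(a,2R)\subset B$ such that $\varsigma(A(a,r,R))\ge c_0/j$, with $c_0$ depending only on $\Gamma$. To prove it, I would consider concentric balls at geometric scales $\rho_k=R_0/4^k$ centered at a well-chosen $a$. If at \emph{every} scale the shell $A(a,\rho_{k+1},\rho_k)$ carried only a small fraction of $\varsigma(B(a,\rho_k))$, the measure would concentrate at $a$ at a geometric rate; this is ruled out by non-atomicity together with (i), which bounds the number of balls of radius $\rho_{k+1}$ needed to cover $B(a,\rho_k)$. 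Hence some shell must carry a definite fraction of the mass of $B$, and it becomes the desired annulus.

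Second, iterate: at step $i$, having extracted $A_1,\ldots,A_{i-1}$ with $2A_1,\ldots,2A_{i-1}$ disjoint, set $X_i:=X\setminus\bigcup_{k<i}2A_k$. A Vitali-type covering (using (i) to bound the multiplicity of overlaps in a $3$-times cover) produces a ball $B_i\subset X_i$ with $\varsigma(B_i)\ge 2/j$, unless $\varsigma(X_i)$ has already dropped below a definite fraction of $\varsigma(X)$. Inside $B_i$ I apply the one-scale lemma to obtain $A_i$ with $2A_i\subset B_i\subset X_i$, which automatically ensures $2A_i$ is disjoint from $2A_1,\ldots,2A_{i-1}$. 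To show the iteration can be carried out $j$ times, I must control the total mass removed, $\sum_k\varsigma(2A_k)$, so that $X_i$ remains massive enough for the Vitali step.

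The main obstacle will be precisely this last point: the metric doubling condition (i) does \emph{not} by itself imply measure doubling, so one cannot simply bound $\varsigma(2A_k)$ by a constant multiple of $\varsigma(A_k)$. The fix, which is the delicate heart of the argument, is to choose the annulus in the one-scale lemma so that it is already ``saturated'' on both sides, namely so that $\varsigma(B(a,2R))\le C'\varsigma(A(a,r,R))$. This can be arranged by a pigeonhole over a longer geometric sequence $\rho_k$: either three consecutive shells have comparable mass, in which case the middle one is saturated; or some shell is much larger than its neighbors, in which case it is again saturated. Tracking these constants through the iteration then yields a uniform lower bound $\varsigma(A_i)\ge c\varsigma(X)/j$ with $c$ depending only on $\Gamma$, as required.
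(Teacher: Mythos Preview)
The paper does not contain a proof of this theorem: it is stated with the citation ``\cite[Theorem 1.1]{gny}'' and is explicitly introduced as ``We recall the following metric construction of disjoint capacitors from \cite{gny}.'' There is therefore no in-paper argument to compare your proposal against; the authors treat it as a black box and invoke it (together with its refinements, Corollary~\ref{corollarygny0} and Theorem~\ref{corollary_small_annuli}) in the proofs of the eigenvalue bounds in Section~\ref{bounds}.

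On the substance of your outline: the overall architecture---a one-scale ``saturated annulus'' lemma yielding $\varsigma(2A)\le C'\varsigma(A)$, followed by an iterative extraction---is indeed the spirit of the Grigor'yan--Netrusov--Yau argument. However, your iteration step has a genuine gap. You write that a Vitali-type argument produces a ball $B_i\subset X_i=X\setminus\bigcup_{k<i}2A_k$ with $\varsigma(B_i)\ge 2/j$, and then find $A_i$ with $2A_i\subset B_i$. But nothing guarantees that $X_i$ contains any metric ball of substantial measure: the complement of a union of annuli can be highly fragmented, and metric doubling (hypothesis i)) says nothing about the geometry of such complements. The actual GNY proof does not attempt to confine the search to $X_i$; instead it works with the full space at each step, selects the annulus so that both $\varsigma(A_i)\ge c/j$ and $\varsigma(2A_i)$ is controlled (this is where the pigeonhole over geometric scales and the notion of ``$\lambda$-balls'' enter), and obtains pairwise disjointness of the $2A_i$ from a separate packing argument rather than from inclusion in a residual set. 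Your final paragraph correctly identifies the saturation condition as the crux, but the mechanism you propose for disjointness would need to be replaced.
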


As we shall see, Theorem \ref{gny} is not easy to use in many concrete cases, e.g., when $X$ is a domain in the standard hyperbolic space $\mathbb H^n$ and $\varsigma$ is the restriction of the Lebesgue measure on $X$. In fact, hypothesis $i)$ fails to hold with $\Gamma$ depending only on the dimension because of the exponential growth of the volume of balls. We state now the following lemma, which improves \cite[Lemma 4.1]{colboisgirouard}. We postpone its proof at the end of this subsection.

\begin{lem}\label{genCM}
Let $(X,{\rm dist},\varsigma)$ be a compact metric measure space with a finite measure $\varsigma$. Assume that for all $s>0$ there exists an integer $N(s)$ such that each ball of radius $5s$ can be covered by $N(s)$ balls of radius $s$. Let $\beta>0$ satisfying $\beta\leq \frac{\varsigma(X)}{2}$ and let $r>0$ be such that for all $x\in X$
$$
\varsigma(B(x,r))\leq\frac{\beta}{2N(r)}.
$$
Then there exist two open sets $A$ and $D$ of $X$ with $A\subset D$ such that:
\begin{enumerate}[i)]
\item $A=B(x_1,r)\cup\cdots\cup B(x_l,r)$ with ${\rm dist}(x_i,x_j)\geq 4r$ if $i\ne j$;
\item $D=A^{4r}=B(x_1,5r)\cup\cdots\cup B(x_l,5r)$;
\item $\varsigma(A)\geq\frac{\beta}{2N(r)}$, $\varsigma(D)\leq\beta$ and ${\rm dist}(A,D^c)\geq 4r$.
\end{enumerate}
\end{lem}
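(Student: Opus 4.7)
My plan is to construct $A$ and $D$ via a maximal $4r$-separated family in $X$ combined with a greedy, volume-controlled selection, following the spirit of the Colbois--Girouard construction in \cite{colboisgirouard} but keeping track of the constant $N(r)$ explicitly. The starting observation, which drives the whole scheme, is the pointwise estimate
\begin{equation*}
\varsigma(B(z,5r))\leq N(r)\max_{y\in X}\varsigma(B(y,r))\leq\beta/2,\qquad\forall\,z\in X,
\end{equation*}
obtained by covering $B(z,5r)$ by $N(r)$ balls of radius $r$ and invoking the pointwise hypothesis on $r$-balls.

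First I would use the compactness of $X$ to extract a maximal family $\{y_1,\dots,y_m\}\subset X$ with pairwise distances $\geq 4r$. By maximality every point of $X$ lies within $4r$ of some $y_i$, so $X=\bigcup_i B(y_i,4r)$ and $\sum_i\varsigma(B(y_i,5r))\geq\varsigma(X)\geq 2\beta$. I would then reorder the indices so that $i\mapsto\varsigma(B(y_i,5r))$ is non-increasing, and let $l$ be the largest integer for which $\sum_{i\leq l}\varsigma(B(y_i,5r))\leq\beta$. The preliminary $\beta/2$ bound guarantees $l\geq 1$, while the total-mass lower bound from the previous sentence forces $l<m$; the greedy stopping condition $\sum_{i\leq l+1}\varsigma(B(y_i,5r))>\beta$ together with $\varsigma(B(y_{l+1},5r))\leq\beta/2$ then yields $\beta/2<\sum_{i\leq l}\varsigma(B(y_i,5r))\leq\beta$.

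Setting $x_i:=y_i$ for $i\leq l$ and defining $A:=\bigcup_i B(x_i,r)$, $D:=\bigcup_i B(x_i,5r)$, the $4r$-separation of the centres immediately reads off parts i) and ii), the identity $D=A^{4r}$, the distance condition ${\rm dist}(A,D^c)\geq 4r$, and the upper bound $\varsigma(D)\leq\sum_i\varsigma(B(x_i,5r))\leq\beta$ in iii).

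The hard part will be the remaining lower bound $\varsigma(A)\geq\beta/(2N(r))$, which requires converting the accumulated $5r$-ball measure (now $>\beta/2$) back into $r$-ball measure. My approach would be to apply the covering hypothesis a second time inside each $B(x_i,5r)$: this ball is covered by $N(r)$ balls of radius $r$, so it must contain a centre $z^{(i)}$ with $\varsigma(B(z^{(i)},r))\geq\varsigma(B(x_i,5r))/N(r)$. Replacing each $x_i$ by $z^{(i)}$ and summing gives $\varsigma(A)\geq\sum_i\varsigma(B(x_i,5r))/N(r)>\beta/(2N(r))$. The delicate point, which I regard as the technical crux of the lemma, is to preserve the $4r$-separation of the centres after this substitution; since each new centre lies in the $5r$-ball around the old one, I would run the initial extraction with a larger separation parameter (e.g.\ $14r$ in place of $4r$, obtaining the corresponding volume estimate at the coarser scale via iterated application of the hypothesis) so that the substituted centres automatically remain $4r$-separated, and then relabel to present the final collection.
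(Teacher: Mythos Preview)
Your argument breaks down at exactly the point you flag as the ``technical crux'': the lower bound $\varsigma(A)\geq\beta/(2N(r))$. Having chosen $l$ so that $\sum_{i\le l}\varsigma(B(y_i,5r))>\beta/2$, you cover each $B(y_i,5r)$ by $N(r)$ balls of radius $r$ and pick the heaviest, centred at some $z^{(i)}$. This gives $\sum_i\varsigma(B(z^{(i)},r))>\beta/(2N(r))$, but these are \emph{not} the balls $B(y_i,r)$ that form your $A$; to use them you must replace the centres, and then both the $4r$-separation and the control on $\varsigma(D)$ (which was chosen using the \emph{old} centres) are lost. Your proposed fix via a $14r$-separated net is only a sketch, and if carried out it would force you to cover $14r$-balls by $r$-balls, which the hypothesis does not give directly; iterating introduces factors like $N(5r)$ and yields strictly worse constants than those stated in the lemma.

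The paper sidesteps this entire difficulty by reversing the order of the two choices. Rather than first fixing a separated net and then hunting for heavy $r$-balls nearby, it selects the centres greedily \emph{so as to maximise the $r$-ball mass} in the complement $\Omega_i:=X\setminus\bigcup_{j<i}B(x_j,5r)$. This maximality is precisely what makes the covering argument work without moving centres: when $B(x_i,5r)\cap\Omega_i$ is covered by $N(r)$ balls $B(y,r)$, each satisfies $\varsigma(B(y,r)\cap\Omega_i)\le\varsigma(B(x_i,r)\cap\Omega_i)$ by the choice of $x_i$, whence $\varsigma(B(x_i,5r)\cap\Omega_i)\le N(r)\,\varsigma(B(x_i,r))$. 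Summing gives $\sum_i\varsigma(B(x_i,r))\ge\varsigma(X)/N(r)\ge 2\beta/N(r)$, and the cutoff $l$ is then chosen at the level of the \emph{$r$-ball} partial sums, so that the lower bound on $\varsigma(A)$ is immediate and the upper bound on $\varsigma(D)$ follows from the same inequality read backwards.
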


A consequence of Lemma \ref{genCM} is the following result providing a decomposition of a metric measure space by capacitors which is alternative of that of Theorem \ref{gny}.

\begin{lem}\label{genCM-cor}
Let $(X,{\rm dist},\varsigma)$ be a compact metric measure space with a finite measure $\varsigma$. Assume that for all $s>0$ there exists an integer $N(s)$ such that each ball of radius $5s$ can be covered by $N(s)$ balls of radius $s$. If there exists an integer $k>0$ and a real number $r>0$ such that, for each $x\in X$
$$
\varsigma(B(x,r))\leq\frac{\varsigma(X)}{4N(r)^2k},
$$
then there exist $k$ $\varsigma$-measurable subsets $A_1,...,A_k$ of $X$ such that
$$
\varsigma(A_i)\geq\frac{\varsigma(X)}{2N(r)k},
$$
for all $i\leq k$, ${\rm dist}(A_i,A_j)\geq 4r$ for $i\ne j$, and 
$$
A_i=B(x^i_1,r)\cup\cdots\cup B(x^i_{l_i},r).
$$
\end{lem}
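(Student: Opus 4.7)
The plan is to iteratively apply Lemma~\ref{genCM}. I would set $\beta := \varsigma(X)/(2k)$, so that $\beta \leq \varsigma(X)/2$; the hypothesis then rewrites as $\varsigma(B(x,r)) \leq \beta/(2N(r)^2)$, which in particular implies the bound $\beta/(2N(r))$ needed in Lemma~\ref{genCM}, with an extra factor $N(r)$ of slack which will be spent on the deterioration of hypotheses when passing to subspaces.

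At step $i=1$ I would apply Lemma~\ref{genCM} to $X$ with parameters $\beta$ and $r$, producing $A_1$ and its $4r$-neighborhood $D_1:=A_1^{4r}$ with $\varsigma(D_1)\leq \beta$. Inductively, for $i=2,\ldots,k$, I would set $X_i := X \setminus \bigcup_{j<i}D_j$ and apply Lemma~\ref{genCM} to the metric-measure space $(X_i,\mathrm{dist},\varsigma|_{X_i})$. Three things then need checking: (i) the inequality $\beta \leq \varsigma(X_i)/2$, which follows from $\varsigma(X_i) \geq \varsigma(X) - (i-1)\beta > \varsigma(X)/2 \geq 2\beta$ (valid for $k\geq 2$; the case $k=1$ is a single direct application of Lemma~\ref{genCM}); (ii) the ball-measure bound $\varsigma(B(x,r)\cap X_i) \leq \beta/(2N(r))$, inherited from the corresponding bound in $X$; and (iii) the covering property on $X_i$, which follows from the one on $X$ by re-centering every cover of a ball of radius $5s$ at points of $X_i$, with a bounded loss in the covering radius that is absorbed by the extra factor of $N(r)$ in the hypothesis.

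Since the centers $x^i_l$ of the balls constituting $A_i$ belong to $X_i$, they satisfy $\mathrm{dist}(x^i_l,A_j)\geq 4r$ for each $j<i$; by using slightly enlarged neighborhoods $A_j^{5r}$ in place of $A_j^{4r}$ for the fattenings (which changes the measure bounds only by a constant factor depending on $N(r)$), one guarantees $\mathrm{dist}(A_i,A_j)\geq 4r$ as required. Each application of Lemma~\ref{genCM} delivers $\varsigma(A_i) \geq \beta/(2N(r)) = \varsigma(X)/(4N(r)k)$, and the exact constant $\varsigma(X)/(2N(r)k)$ stated in the conclusion is recovered by a sharper scheduling such as $\beta=\varsigma(X)/k$ for $i\leq k-1$ together with a separate treatment of the last step which exploits the remaining measure $\varsigma(X_k)\geq \varsigma(X)/k$.

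The principal obstacle I anticipate is precisely the careful transfer of the covering property from $X$ to the subspaces $X_i$: balls in $X_i$ are only intersections of ambient balls with $X_i$, so covers must be rebuilt at points of $X_i$ at the cost of enlarging the covering radius. This is exactly the reason the hypothesis features the quadratic exponent $N(r)^2$ rather than $N(r)$, providing the slack needed to absorb this deterioration in each iteration.
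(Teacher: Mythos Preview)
Your overall strategy---iterating Lemma~\ref{genCM}, removing the enlarged neighborhood $D_i$ after each step, and repeating until $k$ sets are produced---is exactly the argument the paper has in mind (it refers to \cite[Lemma~2.1]{colboisgirouard} for the details). So the approach is correct and matches the paper's.

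However, your diagnosis of the ``principal obstacle'' is off. You apply Lemma~\ref{genCM} to the \emph{subspace} $(X_i,\mathrm{dist},\varsigma|_{X_i})$, which indeed forces you to re-establish the covering property there and leads you to attribute the factor $N(r)^2$ to this degradation. The cleaner route---and the one implicit in the references---is to keep the ambient space $X$ and only restrict the \emph{measure}: at step $i$ apply Lemma~\ref{genCM} to $(X,\mathrm{dist},\varsigma_i)$ with $\varsigma_i(E):=\varsigma(E\cap X_i)$. Then the covering hypothesis is inherited verbatim (nothing about $X$ changed), and Step~3 of the proof of Lemma~\ref{genCM} goes through unchanged since the maximization there is over all $x\in X$. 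With this setup no slack is spent on covering; the extra factor of $N(r)$ in the hypothesis is purely a matter of constant bookkeeping so that the measure lower bound $\varsigma(X)/(2N(r)k)$ survives all $k$ iterations. The separation $\mathrm{dist}(A_i,A_j)\geq 4r$ is then the only genuinely delicate point, and your fix (fatten the removed neighborhoods slightly so that new centers automatically lie far enough from previous $A_j$'s) is the right one.
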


The proof of Lemma \ref{genCM-cor} is a consequence of Lemma \ref{genCM} and follows exactly the same lines of the proof of \cite[Lemma 2.1]{colboisgirouard}. We remark that \cite[Lemma 2.1]{colboisgirouard} provides a decomposition of a metric measure space by capacitors given by union of balls. In Lemma \ref{genCM-cor} the decomposition is given by unions of {\it disjoint} balls.

A clever merging of Theorem \ref{gny} and Lemma \ref{genCM-cor} allows to obtain the following Theorem, which provides a further construction of disjoint families of capacitors. This is a construction which we will widely use in the next subsections. Its proof follows exactly the same lines as those of \cite[Theorem 2.1]{asma_conf}. In fact, the substantial difference is the use of Lemma \ref{genCM-cor} instead of \cite[Lemma 2.3]{asma_conf} (see also \cite[Lemma 2.1]{colboisgirouard} and \cite[Corollary 2.3]{colbois_maerten}).

\begin{thm}\label{corollary_small_annuli}
Let $(X,{\rm dist},\varsigma)$ be a compact metric-measure space with $\varsigma$ a non-atomic finite  Borel measure and let $a>0$. Assume that there exists a constant $\Gamma$ such that any metric ball of radius $0<r\leq a$ can be covered by at most $\Gamma$ balls of radius $\frac{r}{2}$. Then, for every $j\in\mathbb N$ there exists two families $\left\{A_i\right\}_{i=1}^j$ and  $\left\{D_i\right\}_{i=1}^j$ of Borel subsets of $X$ such that $A_i\subset D_i$, with the following properties:
\begin{enumerate}[i)]
\item $\varsigma(A_i)\geq c\frac{\varsigma(X)}{j}$, where $c$ depends only on $\Gamma$;
\item $D_i$ are pairwise disjoint;
\item the two families have one of the following form:
\begin{enumerate}[a)]
\item all the $A_i$ are annuli and $D_i=2A_i$, with outer radii smaller than $a$, or
\item all the $A_i$ are of the form $A_i=B(x_1^i,r_0)\cup\cdots\cup B(x_{l_i}^i,r_0)$, $G_i=A_i^{4r_0}$ and ${\rm dist}(x_k^i,x_l^i)\geq 4r_0$, where $r_0=\frac{4a}{1600}$.
\end{enumerate}
\end{enumerate}
\end{thm}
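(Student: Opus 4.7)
The plan is to run a dichotomy at the fixed scale $r_0 = 4a/1600$, in the spirit of the proof of \cite[Theorem 2.1]{asma_conf}, using Lemma \ref{genCM-cor} in place of the earlier covering lemma cited there. First I would set $N := N(r_0)$ to be the minimum number of balls of radius $r_0$ needed to cover any ball of radius $5 r_0$; since $5 r_0 < a$, iterating the halving-cover hypothesis a bounded number of times shows that $N$ is controlled by $\Gamma$ alone. The dichotomy is whether or not balls at the fixed scale $r_0$ are ``light''.

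\emph{Case (b), light balls.} Suppose every ball $B(x, r_0)$ satisfies $\varsigma(B(x, r_0)) \leq \varsigma(X)/(4 N^2 j)$. Then Lemma \ref{genCM-cor} applies directly with $r = r_0$ and $k = j$, producing $j$ sets $A_i$ of the form $A_i = B(x_1^i, r_0) \cup \cdots \cup B(x_{l_i}^i, r_0)$ with $\varsigma(A_i) \geq \varsigma(X)/(2 N j)$ and ${\rm dist}(A_i, A_{i'}) \geq 4 r_0$ for $i \ne i'$. Setting $D_i = A_i^{4 r_0}$ (and, if needed, thinning the families by a constant factor so that the $4 r_0$-neighborhoods become genuinely disjoint) yields conclusion iii.b with $c$ depending only on $N$, hence on $\Gamma$.

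\emph{Case (a), a heavy ball.} Suppose on the contrary there exists $\bar x \in X$ with $\varsigma(B(\bar x, r_0)) > \varsigma(X)/(4 N^2 j)$. Then enough mass is concentrated at small scales to run the annular construction of Theorem \ref{gny}, restricted to annuli of outer radius $\leq a/2$, which is exactly the regime in which our local covering hypothesis holds with constant $\Gamma$. This produces $j$ annuli $A_i$ with outer radii $\leq a/2$, measures $\varsigma(A_i) \geq c \varsigma(X)/j$, and doubled annuli $2 A_i$ pairwise disjoint with outer radii $\leq a$; setting $D_i = 2 A_i$ then gives conclusion iii.a.

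The main obstacle will be Case (a): Theorem \ref{gny} is stated under a global covering hypothesis, whereas here the hypothesis is available only for radii $\leq a$. I expect to handle this by revisiting the recursive covering procedure in the proof of \cite[Theorem 1.1]{gny} and using the heavy-ball bound of Case (a) to guarantee that the process never needs to invoke balls of radius larger than $a/2$. The delicate bookkeeping is choosing the threshold $r_0$ (and in particular the factor $1600$) so that the Case (b) inequality and the measure concentration required for the Case (a) construction to stay within scale $a/2$ are complementary, and so that the resulting constant $c$ depends only on $\Gamma$. Once this is arranged, the two cases exhaust all possibilities and the theorem follows.
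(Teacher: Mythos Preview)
Your proposal is correct and matches the paper's approach: the paper does not give a self-contained proof of this theorem but states that it follows the proof of \cite[Theorem~2.1]{asma_conf} verbatim, with Lemma~\ref{genCM-cor} substituted for the earlier covering lemma, which is precisely the dichotomy you outline. Your identification of the main obstacle in Case~(a)---that Theorem~\ref{gny} is stated under a global doubling hypothesis while here one must keep the recursion within scale~$a$ using the heavy-ball bound---is exactly the point handled in \cite{asma_conf}, and the constant~$1600$ comes from that bookkeeping.
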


We remark that for a sufficiently large integer $j$ it is always possible to apply the construction of Theorem \ref{gny} and obtain a decomposition of the metric measure space by annuli (Theorem \ref{corollary_small_annuli} $i)$,$ii)$ and $iii)$-$a)$). In particular we have the following.

\begin{lem}\label{large_n}
Assume that the hypothesis of Theorem \ref{corollary_small_annuli} hold. Then there exists an integer $j_X$ such that for every $j\geq j_X$ there exists two families $\left\{A_i\right\}_{i=1}^j$ and  $\left\{D_i\right\}_{i=1}^j$ of Borel subsets of $X$ such that $A_i\subset D_i$ satisfying $i)$,$ii)$ and $iii)$-$a)$ of Theorem \ref{corollary_small_annuli}.
\end{lem}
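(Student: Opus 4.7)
The strategy is to apply Theorem \ref{corollary_small_annuli} verbatim and then show that its alternative iii)-b) is incompatible with $j$ being large, forcing iii)-a) to be the case that occurs. More precisely, I plan to produce an integer $K_X$, depending only on $X$ and on the fixed radius $r_0 = 4a/1600$, such that the occurrence of iii)-b) implies $j \leq K_X$. Setting $j_X := K_X + 1$ then yields the lemma immediately.

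First, I would extract pairwise disjoint small balls from case iii)-b). In that scenario each $A_i$ is written as a non-empty union $A_i = B(x_1^i,r_0)\cup\cdots\cup B(x_{l_i}^i,r_0)$ with $\mathrm{dist}(x_k^i,x_l^i)\geq 4r_0$ whenever $k\neq l$; in particular the balls $B(x_k^i,r_0)$, $k=1,\dots,l_i$, are pairwise disjoint. Across different indices $i\neq j$ these balls lie in $A_i\subset D_i$ and $A_j\subset D_j$ respectively, and since the $D_i$'s are pairwise disjoint by ii), the full collection $\{B(x_k^i,r_0) : 1\leq i\leq j,\; 1\leq k\leq l_i\}$ is a family of pairwise disjoint open balls of radius $r_0$ in $X$. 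Because every $l_i\geq 1$, this family has cardinality at least $j$.

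Next I would bound the packing number of $X$ by $r_0$-balls by a quantity $K_X$ depending only on $X$ and $r_0$. Since $X$ is compact, a standard packing-covering comparison works: any maximal family $\{B(y_m,r_0)\}_{m=1}^{K}$ of pairwise disjoint open $r_0$-balls in $X$ yields a covering of $X$ by the enlarged balls $\{B(y_m,2r_0)\}_{m=1}^{K}$, and compactness of $X$ forces $K$ to be finite; denote this maximum by $K_X$. Combining this with the previous step shows that in case iii)-b) one necessarily has $j\leq K_X$. Therefore, for every $j\geq j_X:=K_X+1$, alternative iii)-b) of Theorem \ref{corollary_small_annuli} is ruled out, and alternative iii)-a) provides the desired families of annuli $A_i$ of outer radius smaller than $a$ and sets $D_i=2A_i$ pairwise disjoint, satisfying i), ii) and iii)-a). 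The main (and essentially only) point to be careful about is verifying that $K_X$ really depends only on the metric measure space $X$ and on $r_0$; this is transparent from the compactness assumption on $X$ built into the hypotheses of Theorem \ref{corollary_small_annuli}.
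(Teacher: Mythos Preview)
Your approach is sound and the paper does not actually give its own proof here—it simply writes ``We refer to \cite[Proposition 2.1]{asma_conf} for the proof of Lemma \ref{large_n}.'' So there is nothing to compare against in the present paper, and your packing argument is a perfectly natural way to establish the result.

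One small point: the sentence ``compactness of $X$ forces $K$ to be finite'' is not quite right as written. Compactness tells you that an open cover admits a finite subcover, not that a given cover is itself finite. What you actually want is the (standard) consequence that a compact metric space is totally bounded: cover $X$ by finitely many balls $B(z_1,r_0/2),\dots,B(z_M,r_0/2)$. If $\{B(y_m,r_0)\}_{m=1}^K$ are pairwise disjoint, then the centers $y_m$ are $r_0$-separated (since $y_{m'}\in B(y_m,r_0)\cap B(y_{m'},r_0)$ whenever $d(y_m,y_{m'})<r_0$), so no two of them can lie in the same $B(z_\ell,r_0/2)$. Hence $K\leq M=:K_X$. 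With this correction your argument is complete: in alternative iii)-b) you extract at least $j$ pairwise disjoint $r_0$-balls (one from each $A_i$, the disjointness across different $i$ coming from $A_i\subset D_i$ and ii)), so $j\leq K_X$, and for $j\geq j_X:=K_X+1$ only iii)-a) can occur.
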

We refer to \cite[Proposition 2.1]{asma_conf} for the proof of Lemma \ref{large_n}.

We state now a useful corollary of Theorem \ref{gny} which gives a lower bound of the inner radius of the annuli of the decomposition, see \cite[Remark\,3.13]{gny}.
\begin{cor}\label{corollarygny0}
Let the assumptions of Theorem \ref{gny} hold. Then each annulus $A_i$ has either internal radius $r_i$ such that
\begin{equation}\label{gny-rad-est}
r_i\geq\frac{1}{2}\inf\left\{r\in\mathbb R:V(r)\geq v_j\right\},
\end{equation}
where $V(r):=\sup_{x\in X}\varsigma(B(x,r))$ and $v_j=c\frac{\varsigma(X)}{j}$ , or is a ball of radius $r_i$ satisfying \eqref{gny-rad-est}.
\end{cor}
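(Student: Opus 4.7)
The plan is to read the bound directly off the hierarchical construction used to prove Theorem~\ref{gny}; indeed, the statement is essentially recorded as Remark~3.13 in \cite{gny}. Set
$$
\rho := \inf\{r \in \mathbb{R} : V(r) \geq v_j\},
$$
which is well-defined since $V$ is non-decreasing with $V(r) \to \varsigma(X) > v_j$ as $r$ grows. The goal then reduces to showing that every element produced by the decomposition either has inner radius at least $\rho/2$, or is a ball of radius at least $\rho/2$. Under both alternatives the conclusion will follow from the elementary implication $V(r) \geq v_j \Longrightarrow r \geq \rho$, applied to the appropriate scale.

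The degenerate (ball) case is immediate. If $A_i = B(a_i, R_i)$ is a ball rather than a proper annulus, then
$$
V(R_i) \;\geq\; \varsigma(B(a_i, R_i)) \;\geq\; \varsigma(A_i) \;\geq\; v_j,
$$
so $R_i \geq \rho$ and a fortiori $R_i \geq \rho/2$.

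For a proper annulus $A_i = A(a_i, r_i, R_i)$ with $r_i > 0$, the bound on $r_i$ is a structural feature of the Vitali-type selection in the proof of \cite[Theorem~1.1]{gny}. The algorithm there outputs an annulus at inner scale $r_i$ only once the ball $B(a_i, 2r_i)$ is already known to carry mass at least $v_j$; were this mass strictly smaller than $v_j$, the procedure would continue refining to a finer scale instead of terminating at $r_i$. Granted this property, we get $V(2r_i) \geq \varsigma(B(a_i, 2r_i)) \geq v_j$, hence $2r_i \geq \rho$, i.e.\ $r_i \geq \rho/2$.

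The main obstacle is the extraction of the threshold property used in the genuine-annulus step: one must track, through the dyadic hierarchy of \cite{gny}, the precise scale at which $A_i$ is declared complete and verify that this scale corresponds to a ball $B(a_i, 2r_i)$ of mass at least $v_j$. Once this bookkeeping is done, both cases of the corollary collapse to the same monotonicity argument above, giving the claimed lower bound on $r_i$ uniformly over the decomposition.
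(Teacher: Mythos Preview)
Your proposal is correct and matches the paper's treatment: the paper does not prove the corollary independently but simply cites \cite[Remark~3.13]{gny}, and your argument does the same while spelling out why the ball case is immediate from the mass bound $\varsigma(A_i)\geq v_j$ and why the proper-annulus case reduces to the threshold property built into the hierarchical construction of \cite{gny}. One minor notational point: in the ball case the corollary calls the radius $r_i$ rather than $R_i$, but your reasoning is unaffected.
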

It turns out that Corollary \ref{corollarygny0} applies to the case $iii)$-$a)$ of Theorem \ref{corollary_small_annuli}, see also \cite{asma_conf}.

We conclude this subsection with the proof of Lemma \ref{genCM}

\begin{proof}[Proof of Lemma \ref{genCM}] 
\textbf{Step 1.} We construct the points $x_i$ by induction. Let $\Omega_1=X$. The point $x_1$ is such that $\varsigma(B(x_1,r)\cap \Omega_1)=\max\{\varsigma(B(x,r)\cap \Omega_1):x\in X\}$.

Let $\Omega_2=X\setminus B(x_1,5r)$. The point $x_2$ is such that 
$$
\varsigma(B(x_2,r)\cap \Omega_1)=\max\{\varsigma(B(x,r)\cap \Omega_2):x\in X\}.$$
Note that this definition implies that ${\rm dist}(x_2,x_1)\ge 4r$. If ${\rm dist}(x_2,x_1)<4r$, $B(x,r)\cap \Omega_2 = \emptyset$.

Suppose that we have constructed $x_1,...,x_j$. Let $\Omega_{j+1}=X\setminus (B(x_1,5r)\cup...\cup B(x_j,5r) $. Suppose $\Omega_{j+1}\not = \emptyset$. The point $x_{j+1}$ is such that $\varsigma(B(x_{j+1},r)\cap \Omega_{j+1})=\max\{\varsigma(B(x,r)\cap \Omega_{j+1}):x\in X\}$. Note that this definition implies that ${\rm dist}(x_{j+1},x_i)\ge 4r$. If ${\rm dist}(x,x_i)<4r$, $B(x,r)\cap \Omega_{j+1} = \emptyset$.

By compactness, the process has to stop: there exist only finitely many points on $X$ such that ${\rm dist}(x_i,x_j)\ge 4r$. Let $(x_1,...,x_k)$ the set of points we have constructed. We have $\varsigma(X\setminus (B(x_1,5r)\cup...\cup B(x_k,5r)))=0$ otherwise we could do another iteration. Then
$$
\varsigma(X)=\varsigma(B(x_1,5r)\cup...\cup B(x_k,5r)).
$$

\noindent
\textbf{Step 2.} We write
$$
B(x_1,5r)\cup...\cup B(x_k,5r)=(B(x_1,5r) \cap \Omega_1)\cup (B(x_2,5r)\cap \Omega_2)\cup...\cup (B(x_k,5r)\cap \Omega_k).
$$
Indeed, if $x\in B(x_i,5r)$ and $x\not \in B(x_i,5r)\cap \Omega_i$, we have by construction that $x\in B(x_1,5r)\cup...\cup B(x_{i-1},5r))$. Let $j$ be the smallest integer such that $x\in B(x_j,5r)$. Then $x\in \Omega_j$. In fact, if it not the case,  $x\in B(x_1,5r)\cup...\cup B(x_{j-1},5r))$ and this would contradict the fact that $j$ was minimal.

As this is a disjoint union, we get
$$
\varsigma(X)=\varsigma(B(x_1,5r)\cup...\cup B(x_k,5r))=\varsigma(B(x_1,5r) \cap \Omega_1)+...+\varsigma(B(x_k,5r) \cap \Omega_k).
$$

\noindent
\textbf{Step 3.}
For each $i$, let us show that
$$
\varsigma(B(x_i,5r) \cap \Omega_i)\le N(r) \varsigma(B(x_i,r)\cap \Omega_i).
$$
In fact by definition, 
$$
\varsigma(B(x_{i},r)\cap \Omega_{i})=\max\{\varsigma(B(x,r)\cap \Omega_{i}):x\in X\}
$$
and $B(x_i,5r)$ is covered by $N(r)$ balls of radius $r$. Let $B(y,r)$ one of these balls. We have 
$$
\varsigma(B(y,r)\cap \Omega_{i}) \le \varsigma(B(x_{i},r)\cap \Omega_{i}),
$$
so that $\varsigma(B(x_i,5r) \cap \Omega_i)\le N(r) \varsigma(B(x_i,r)\cap \Omega_i)$.

\noindent
\textbf{Step 4.}
We deduce
\begin{multline}
\varsigma(X) =\varsigma(B(x_1,5r) \cap \Omega_1)+...+\varsigma(B(x_k,5r) \cap \Omega_k) \\
\le N(r)(\varsigma(B(x_1,r) \cap \Omega_1)+...+\varsigma(B(x_k,r) \cap \Omega_k)\\
\le N(r)(\varsigma(B(x_1,r)+...+\varsigma(B(x_k,r)),
\end{multline}
and
$$
\varsigma(B(x_1,r))+...+\varsigma(B(x_k,r))\ge \frac{\varsigma(X)}{N(r)}.
$$
By hypothesis $\beta \le \frac{\varsigma(X)}{2}$. As $\varsigma(B(x_1,r)+...+\varsigma(B(x_k,r))\ge \frac{\varsigma(X)}{N(r)} \ge \frac{2\beta}{N(r)}$, we choose $l$ such that 
$$
\varsigma(B(x_1,r))+...+\varsigma(B(x_l,r))\ge \frac{\beta}{2N(r)}
$$
and
$$
\varsigma(B(x_1,r))+...+\varsigma(B(x_{l-1},r))\le \frac{\beta}{2N(r)}.
$$
Let $A=B(x_1,r)\cup...\cup B(x_l,r)$ and $D=B(x_1,5r)\cup...\cup B(x_l,5r)$. We have by construction ${\rm dist}(x_i,x_j)\ge 4r$ and we immediately deduce that $\varsigma(A)\ge \frac{\beta}{2N(r)}$, ${\rm dist}(A,D^r)\ge 4r$. It remains to show
$$
\varsigma(D)\le \beta.
$$
We can argue as before and deduce that
$$
 \varsigma(D)=\varsigma(B(x_1,5r))+...+\varsigma(B(x_l,5r) =\varsigma(B(x_1,5r) \cap \Omega_1)+...+\varsigma(B(x_l,5r) \cap \Omega_l)\le
$$
$$
\le N(r)(\varsigma(B(x_1,r)+...+\varsigma(B(x_l,r))) \le \frac{\beta}{2}.
$$
This concludes the proof.
\end{proof}




\subsection{A first general estimate}\label{sub_main}

In this subsection we prove an upper bound which holds for any domain with $C^1$ boundary of a complete $n$-dimensional smooth Riemannian manifold $(M,g)$ with a given lower bound on the Ricci curvature of the form ${\rm Ric}\geq-(n-1)\kappa^2$, $\kappa\geq 0$. 

We need a few preliminary definitions. We denote by $r_{inj}(p)$ the injectivity radius of the manifold $(M,g)$ at $p$. We denote by $r_{inj}$ the injectivity radius of the  manifold $(M,g)$, which is defined as the infimum of $r_{inj}(p)$ for $p\in M$. We will use also the injectivity radius relative to $\Omega\subset M$, defined by:
\begin{equation}\label{injO}
r_{inj,\Omega}:=\inf_{p\in\overline\Omega}r_{inj}(p).
\end{equation}
If $\Omega$ is bounded, then the infimum in \eqref{injO} is actually a minimum and it is strictly positive.

We are ready to state the main result of this subsection.

\begin{thm}\label{main_general}
Let $(M,g)$ be a complete $n$-dimensional smooth Riemannian manifold with ${\rm Ric}\geq-(n-1)\kappa^2$, $\kappa\geq 0$ and let $\Omega$ be a bounded domain of $M$ of class $C^1$. Let $a:=\min\left\{\frac{1}{\kappa},\frac{r_{inj,\Omega}}{2}\right\}$. Then 
\begin{equation}\label{main_estimate_general}
\mu_j\leq A_n\left(\frac{j}{|\Omega|}\right)^{\frac{4}{n}}+B_n\frac{|\partial\Omega|^4}{|\Omega|^4}+\frac{C_n}{a^4}.
\end{equation}
for all $j\in\mathbb N$, where $A_n,B_n,C_n$ are positive constants which depend only on the dimension.
\end{thm}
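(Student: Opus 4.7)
The plan is to combine the min-max characterization \eqref{minmax} with the capacitor decomposition of Theorem \ref{corollary_small_annuli} applied to the metric-measure space $(\overline\Omega, d_M, dv)$, where $d_M$ is the restriction of the ambient Riemannian distance. For each $j$, this yields $j$ disjointly supported Borel sets, and associating a well-chosen $H^2$ test function $u_i$ to each one produces a $j$-dimensional subspace of $H^2(\Omega)$ on which the Rayleigh quotient can be estimated individually; since the $u_i$ are disjointly supported, the min-max gives $\mu_j \leq \max_i R(u_i)$.

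To apply Theorem \ref{corollary_small_annuli}, one first checks the covering hypothesis with a dimensional constant $\Gamma(n)$. The definition $a=\min\{1/\kappa, r_{inj,\Omega}/2\}$ ensures that, on balls of radius $\leq 2a$, the exponential map is a diffeomorphism and the Bishop--Gromov volume comparison applies; standard packing arguments then give that any ball of radius $r\leq a$ is covered by $\Gamma(n)$ balls of radius $r/2$. The theorem then yields, for each $j$, two families $\{A_i\}_{i=1}^j,\{D_i\}_{i=1}^j$ with $A_i\subset D_i$, $D_i$ pairwise disjoint, $|A_i|\geq c_n|\Omega|/j$, and either (a) $A_i$ is an annulus $A(x_i,r_i,R_i)$ with $R_i\leq a$ and $D_i=2A_i$, or (b) $A_i$ is a union of disjoint balls of radius $r_0=a/400$ with $D_i$ their $4r_0$-neighborhood.

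In case (a), I would take $u_i=\phi_i(d(\cdot,x_i))$, where $\phi_i\in C_c^2((r_i/2,2R_i))$ is a standard radial bump equal to $1$ on $[r_i,R_i]$ with $\|\phi_i'\|_\infty\leq C/r_i$ and $\|\phi_i''\|_\infty\leq C/r_i^2$; the condition $R_i\leq a\leq r_{inj}(x_i)/2$ makes $d(\cdot,x_i)$ smooth on the support of $\nabla u_i$, so $u_i\in H^2(\Omega)$. For the Rayleigh quotient, I would apply Reilly's identity \eqref{reilly} to $u_i$ to rewrite
\[
\int_\Omega |D^2u_i|^2+\mathrm{Ric}(\nabla u_i,\nabla u_i)\,dv=\int_\Omega(\Delta u_i)^2\,dv-\int_{\partial\Omega}\!\bigl[(n-1)\mathcal H(\partial_\nu u_i)^2+2\Delta_{\partial\Omega}u_i\,\partial_\nu u_i+II(\nabla_{\partial\Omega}u_i,\nabla_{\partial\Omega}u_i)\bigr]d\sigma,
\]
thus circumventing the need to control $|D^2 d|$ (which would require a sectional-curvature bound). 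Using $\Delta u_i=\phi_i''(d)+\phi_i'(d)\Delta d$ and the Laplacian comparison $\Delta d\leq (n-1)\kappa\coth(\kappa d)$, one gets $|\Delta u_i|\leq C(n)/r_i^2$ on $\mathrm{supp}(u_i)$ (since $r_i\leq a\leq 1/\kappa$), so $\int_\Omega(\Delta u_i)^2dv\leq C(n)|D_i|/r_i^4$, while $\int_\Omega u_i^2dv\geq|A_i|$. The internal-radius lower bound from Corollary \ref{corollarygny0} plus Bishop--Gromov convert $1/r_i^4$ into the quantity $C(n)(j/|\Omega|)^{4/n}$, yielding the main $A_n(j/|\Omega|)^{4/n}$ term. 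The analogous construction in case (b) is built by summing bumps on each ball of the union, and is relevant for the small-$j$ regime where it contributes to $C_n/a^4$.

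The principal obstacle, and the source of the two additive correction terms, is the Reilly boundary integral. I would estimate it using trace inequalities on $\partial\Omega$: terms of the form $\int_{\partial\Omega}(\partial_\nu u_i)^2$ and $\int_{\partial\Omega}|\nabla_{\partial\Omega}u_i|^2$ are bounded by $\|\nabla u_i\|_{L^\infty}^2\cdot|\partial\Omega\cap\mathrm{supp}(u_i)|$, times $\sup_{\partial\Omega}|II|$ and $\sup_{\partial\Omega}|\mathcal H|$. After summing the worst-case contribution over all capacitors whose $D_i$ meets $\partial\Omega$, the boundary's total measure $|\partial\Omega|$ appears, and optimizing the scale $r_i$ versus this boundary contribution produces the $B_n|\partial\Omega|^4/|\Omega|^4$ term; the scale $r_0\sim a$ of case (b), together with the inability to use annuli below the injectivity radius, gives the $C_n/a^4$ term. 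The proof concludes by taking the largest of the $j$ Rayleigh quotients so obtained and invoking \eqref{minmax}.
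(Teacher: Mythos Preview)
Your overall architecture (capacitor decomposition, radial $H^2$ test functions built from the distance to the center, min-max) is the same as the paper's, but two steps in your sketch do not go through as written and would not yield the stated theorem.

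First, using Reilly's identity is the wrong tool here. The boundary integral in Reilly involves $(n-1)\mathcal H(\partial_\nu u_i)^2$, $II(\nabla_{\partial\Omega}u_i,\nabla_{\partial\Omega}u_i)$ and $\Delta_{\partial\Omega}u_i$, and you yourself note that bounding these costs factors of $\sup_{\partial\Omega}|\mathcal H|$ and $\sup_{\partial\Omega}|II|$. These are not dimensional constants; they are not controlled by any hypothesis of the theorem (only ${\rm Ric}\geq-(n-1)\kappa^2$ on $M$ and $\Omega$ of class $C^1$ are assumed, so $II$ need not even be defined). The paper avoids this by integrating Bochner's formula \eqref{bochner_f} over $\Omega\cap 2A_i$ and applying the divergence theorem: since $u_i\in H^2_0(2A_i)$, the only surviving boundary terms are
\[
\int_{\partial\Omega\cap 2A_i}\tfrac12\,\partial_\nu|\nabla u_i|^2-\Delta u_i\,\partial_\nu u_i\,d\sigma,
\]
which involve only $|\nabla|\nabla u_i|^2|$ and $|\Delta u_i|\,|\nabla u_i|$, quantities you can bound pointwise from the explicit profile $f$ without any reference to the curvature of $\partial\Omega$. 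Combined with the selection of $j$ annuli out of $4j$ so that $|\partial\Omega\cap 2A_i|\leq|\partial\Omega|/j$ (a step you omit but which is needed), this is what produces the clean $|\partial\Omega|^4/|\Omega|^4$ term with a purely dimensional constant.

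Second, your claim that the one-sided Laplacian comparison $\Delta\delta_p\leq (n-1)\kappa\coth(\kappa\delta_p)$ yields $|\Delta u_i|\leq C(n)/r_i^2$ is not justified: since $\Delta u_i=\phi''(\delta_p)+\phi'(\delta_p)\Delta\delta_p$ and $\phi'$ changes sign, you need a \emph{two-sided} bound on $\Delta\delta_p$. The paper obtains this in Lemma~\ref{lem_lap_small_ball}: for $x\in B(p,r_{inj}(p)/2)$ one reflects through the point $p'$ on the geodesic at distance $r_{inj}(p)/2$, uses $\delta_p+\delta_{p'}=r_{inj}(p)/2$ along that geodesic, and applies the comparison theorem at $p'$ to get the matching lower bound. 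This is precisely why the scale $a$ is capped at $r_{inj,\Omega}/2$ and why $a$ enters the final estimate.
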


The strategy of the proof of Theorem \ref{main_general} is to build, for each $j\in\mathbb N$, $j$ disjointly supported functions $u_1,...,u_j\in H^2(\Omega)$. Hence, the linear space $U_j$ spanned by $u_1,...,u_j$ is $j$-dimensional and we can use $U_j$ in the min-max formula \eqref{minmax}. The fact that the functions $u_1,...,u_j$ have disjoint support makes easy to estimate the Rayleigh quotient of any function in $U_j$: it is in fact sufficient to estimate the Rayleigh quotient of each of the $u_i$.

Suitable test functions for the Rayleigh quotient in \eqref{minmax} are built, in this subsection, in terms of the Riemannian distance function from a point $p\in M$. For any $x,p\in M$ we denote by $\delta_p(x)$ the function
$$
\delta_p(x):={\rm dist}(x,p).
$$ 
 We also denote by ${\rm cut}(p)$ the cut-locus of a point $p\in M$. We will make use of the Laplacian Comparison Theorem, see e.g., \cite[\S\,9]{petersen} for details.
\begin{thm}\label{lap_comp_0}
Let $(M,g)$ be a complete $n$-dimensional smooth Riemannian manifold satisfying ${\rm Ric}\geq-(n-1)\kappa^2$, $\kappa\geq 0$ and let $p\in M$. Then, for any $x\in M\setminus(\left\{p\right\}\cup{\rm cut}(p))$ 
\begin{enumerate}[i)]
\item $\Delta\delta_p(x)\leq (n-1)\kappa\coth(\kappa\delta_p(x))$ if $\kappa>0$;
\item $\Delta\delta_p(x)\leq \frac{n-1}{\delta_p(x)}$ if $\kappa=0$.
\end{enumerate}
\end{thm}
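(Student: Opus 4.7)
The plan is to use Bochner's formula applied to the distance function $\delta_p$ to derive a Riccati-type differential inequality for $\phi(t):=\Delta\delta_p(\gamma(t))$ along a minimizing geodesic $\gamma$ from $p$, and then to compare $\phi$ with the corresponding quantity on the model space of constant sectional curvature $-\kappa^2$. Fix $x\notin\{p\}\cup{\rm cut}(p)$ and let $\gamma:[0,\delta_p(x)]\to M$ be the unique minimizing unit-speed geodesic from $p$ to $x$; on a neighborhood of $\gamma((0,\delta_p(x)])$ the function $\delta_p$ is smooth and $|\nabla\delta_p|\equiv 1$, with $\nabla\delta_p(\gamma(t))=\gamma'(t)$.

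First I would apply Bochner's formula \eqref{bochner_f} to $\delta_p$. Since $|\nabla\delta_p|^2\equiv 1$, the left-hand side of \eqref{bochner_f} rearranges to
\begin{equation*}
0=|D^2\delta_p|^2+{\rm Ric}(\nabla\delta_p,\nabla\delta_p)+\langle\nabla\Delta\delta_p,\nabla\delta_p\rangle.
\end{equation*}
Differentiating $\delta_p$ twice along $\gamma$ shows that $D^2\delta_p(\gamma'(t),\cdot)=0$, so on the $(n-1)$-dimensional orthogonal complement of $\gamma'(t)$ the Hessian $D^2\delta_p$ coincides with the shape operator $S_t$ of the geodesic sphere of radius $t$. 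Writing $\phi(t)=\Delta\delta_p(\gamma(t))={\rm tr}\,S_t$, Cauchy--Schwarz gives $|D^2\delta_p|^2=|S_t|^2\geq\frac{({\rm tr}\,S_t)^2}{n-1}=\frac{\phi(t)^2}{n-1}$. Combined with the Ricci hypothesis ${\rm Ric}\geq-(n-1)\kappa^2$, the Bochner identity yields the Riccati inequality
\begin{equation*}
\phi'(t)\leq-\frac{\phi(t)^2}{n-1}+(n-1)\kappa^2.
\end{equation*}

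Next I would compare $\phi$ with the model function. Let $\phi_\kappa(t):=(n-1)\kappa\coth(\kappa t)$ if $\kappa>0$ and $\phi_0(t):=\frac{n-1}{t}$; a direct check shows that $\phi_\kappa$ solves the Riccati \emph{equation} $\phi_\kappa'=-\frac{\phi_\kappa^2}{n-1}+(n-1)\kappa^2$ on $(0,\infty)$. The asymptotic behavior at $t=0$ of the Laplacian of the distance function on any Riemannian manifold gives $\phi(t)=\frac{n-1}{t}+o(t^{-1})$ as $t\to 0^+$, and the same holds for $\phi_\kappa$. Setting $\psi:=\phi-\phi_\kappa$, subtraction yields $\psi'\leq-\frac{\phi+\phi_\kappa}{n-1}\psi$ with $\limsup_{t\to 0^+}\psi(t)\leq 0$; a standard Gr\"onwall-type argument on $(0,\delta_p(x)]$ then forces $\psi\leq 0$ throughout, giving $\phi(t)\leq\phi_\kappa(t)$ and hence $(i)$ and $(ii)$ at $t=\delta_p(x)$.

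The only genuinely delicate step is justifying the comparison near the singularity $t=0$: both $\phi$ and $\phi_\kappa$ blow up like $(n-1)/t$, so one cannot directly subtract initial values. I would handle this by writing $\psi(t)=t(\phi(t)-\phi_\kappa(t))$, which extends continuously to $0$ with $\psi(0)=0$, and then translating the Riccati inequality into an inequality for $\psi$ that can be integrated from $0$. The case $\kappa=0$ is either obtained by repeating the argument with $\phi_0=\frac{n-1}{t}$ or simply by letting $\kappa\to 0^+$ in $(i)$, since $(n-1)\kappa\coth(\kappa t)\to\frac{n-1}{t}$.
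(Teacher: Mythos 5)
Your argument is correct: it is the standard Bochner--Riccati proof of the Laplacian comparison theorem (derive $\phi'\leq-\tfrac{\phi^2}{n-1}+(n-1)\kappa^2$ from \eqref{bochner_f} and $|\nabla\delta_p|\equiv 1$, check that $(n-1)\kappa\coth(\kappa t)$ solves the corresponding Riccati equation, and compare via the matching $\tfrac{n-1}{t}$ asymptotics at $t=0^+$), and you correctly identify and handle the one delicate point, namely that the comparison must be started at the singularity $t=0$ rather than at a finite initial value. Note that the paper does not prove this statement at all --- it is quoted as a known result with a reference to Petersen's book --- so there is nothing to compare against beyond observing that your proof is exactly the classical one given there.
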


We prove now the following lemma.

\begin{lem}\label{lem_lap_small_ball}
Let $(M,g)$ be a complete $n$-dimensional smooth Riemannian manifold  with ${\rm Ric}\geq-(n-1)\kappa^2$, $\kappa\geq 0$. Then, for any $p\in M$ and any $x\in B\left(p,\frac{r_{inj}(p)}{2}\right)$ we have
\begin{enumerate}[i)]
\item $|\Delta\delta_p(x)|\leq (n-1)\kappa\coth(\kappa\delta_p(x))$ if $\kappa>0$;
\item $|\Delta\delta_p(x)|\leq \frac{n-1}{\delta_p(x)}$ if $\kappa=0$.
\end{enumerate}
In particular, for any $\kappa\geq 0$
\begin{equation}\label{ineq_gen}
|\Delta\delta_p(x)|\leq \frac{n-1}{\delta_p(x)}+(n-1)\kappa.
\end{equation}
\end{lem}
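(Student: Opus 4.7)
The plan is to establish the two-sided bound $|\Delta\delta_p(x)| \leq (n-1)\kappa\coth(\kappa\delta_p(x))$ for $\kappa > 0$ (respectively $|\Delta\delta_p(x)| \leq (n-1)/\delta_p(x)$ when $\kappa = 0$); the stated "in particular" consequence then follows from the elementary inequality $\coth(x) \leq 1 + 1/x$ for $x > 0$ (which comes from $\coth(x) = 1 + 2/(e^{2x}-1)$ together with $e^{2x} - 1 > 2x$). For the upper bound on $\Delta\delta_p$ I would invoke Theorem \ref{lap_comp_0} directly: the hypothesis $x \in B(p, r_{inj}(p)/2)$ guarantees $x \notin \{p\} \cup {\rm cut}(p)$, so (i) or (ii) of that theorem applies verbatim.

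The lower bound is the main content, and the restriction $x \in B(p, r_{inj}(p)/2)$ (rather than merely $x \notin {\rm cut}(p)$) enters decisively. I would fix the unit-speed minimizing geodesic $\gamma : [0, r_{inj}(p)) \to M$ with $\gamma(0) = p$ and $\gamma(R) = x$, where $R = \delta_p(x) < r_{inj}(p)/2$, and set $h(r) := \Delta\delta_p(\gamma(r))$, which is smooth on $(0, r_{inj}(p))$ since $\delta_p$ is smooth off the cut locus. Applying Bochner's formula \eqref{bochner_f} to $\delta_p$ along $\gamma$ and using $|\nabla\delta_p|^2 \equiv 1$ (so $\Delta|\nabla\delta_p|^2 = 0$ and $D^2\delta_p(\nabla\delta_p,\cdot) = 0$), together with the Cauchy--Schwarz bound $|D^2\delta_p|^2 \geq h(r)^2/(n-1)$ on the $(n{-}1)$-dimensional normal space to $\gamma'$ and the hypothesis ${\rm Ric}\geq -(n-1)\kappa^2$, one obtains the Riccati-type differential inequality
$$h'(r) \leq -\frac{h(r)^2}{n-1} + (n-1)\kappa^2 \quad {\rm on\ } (0, r_{inj}(p)).$$

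To finish, I would argue by contradiction: assume $h(R) < -(n-1)\kappa\coth(\kappa R)$, which forces $h(R) < -(n-1)\kappa$ since $\coth > 1$. The solution $y$ of the equality ODE $y' = -y^2/(n-1) + (n-1)\kappa^2$ with $y(R) = h(R)$ takes the form $y(r) = -(n-1)\kappa\coth(\kappa(b-r))$ and blows up at some $b > R$ determined by $\kappa(b-R) = {\rm arccoth}(|h(R)|/((n-1)\kappa))$; the assumption on $h(R)$ gives $\kappa(b-R) < {\rm arccoth}(\coth(\kappa R)) = \kappa R$, hence $b < 2R < r_{inj}(p)$. A standard comparison argument applied to $u = y - h$ (which satisfies $u(R) = 0$ and $u' \geq -u(h+y)/(n-1)$, yielding a Gr\"onwall-type inequality) propagates $h \leq y$ on $[R, b)$, so $h(r) \to -\infty$ as $r \to b^-$; this contradicts the smoothness of $\delta_p$ (and therefore finiteness of $h$) at $\gamma(b)$, since $b < r_{inj}(p)$. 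Hence $h(R) \geq -(n-1)\kappa\coth(\kappa R)$, which combined with the upper bound proves (i); case (ii) is handled identically with the limiting model $y(r) = (n-1)/(r-c)$, whose blow-up under the analogous assumption again occurs at $b < 2R < r_{inj}(p)$. I expect the main technical obstacle to be the comparison step, since $y$ is singular at $b$: it is resolved by running the Gr\"onwall argument on $[R, b-\varepsilon]$ and letting $\varepsilon \to 0^+$.
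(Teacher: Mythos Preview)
Your proof is correct but follows a genuinely different route from the paper's. The paper's argument for the lower bound is a short reflection trick: it picks the point $p'$ on the minimizing geodesic through $x$ with $\delta_p(p')=r_{inj}(p)/2$, writes $\Delta\delta_p(x)=-\Delta\delta_{p'}(x)$ from the relation $\delta_p+\delta_{p'}=r_{inj}(p)/2$ along that segment, and then applies the upper bound of Theorem~\ref{lap_comp_0} to $\delta_{p'}$ at $x$. Your approach instead runs the Riccati comparison directly: from Bochner's formula you extract $h'\le -h^2/(n-1)+(n-1)\kappa^2$ along the geodesic and show by a forward blow-up argument that a value $h(R)<-(n-1)\kappa\coth(\kappa R)$ would force $h\to-\infty$ at some parameter $b<2R<r_{inj}(p)$, contradicting smoothness of $\delta_p$ there. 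The paper's route is shorter and reuses Theorem~\ref{lap_comp_0} as a black box; yours is more self-contained, avoids any delicate identification of $\delta_p$ with $r_{inj}(p)/2-\delta_{p'}$ off the geodesic, and makes transparent why halving the injectivity radius is the natural threshold (it is exactly what pins the Riccati blow-up inside $B(p,r_{inj}(p))$). Your derivation of the final inequality \eqref{ineq_gen} from $\coth t\le 1+1/t$ matches the paper's.
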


\begin{proof}
We prove point $ii)$.  Let $p\in M$ and let $x\in B\left(p,\frac{r_{inj}(p)}{2}\right)$. Let $p'$ be the unique point such that $\delta_p(p')=\frac{r_{inj}(p)}{2}$ and $x$ belongs to the geodesic joining $p$ and $p'$. From Theorem \ref{lap_comp_0} it follows that
$$
\Delta\delta_p(x)\leq \frac{n-1}{\delta_p(x)}.
$$
Moreover, since $x$ belongs to the geodesic connecting $p$ with $p'$ we see that 
\begin{multline*}
\Delta\delta_p(x)=\Delta\left(\frac{r_{inj}(p)}{2}-\delta_{p'}(x)\right)=-\Delta\delta_{p'}(x)\geq-\frac{n-1}{\delta_{p'}(x)}\\
=-\frac{n-1}{\frac{r_{inj}(p)}{2}-\delta_{p}(x)}\geq -\frac{n-1}{\delta_{p}(x)}.
\end{multline*}
Point $ii)$ is now proved. Point $i)$ is proved exactly in the same way. The last statement follows by observing that $\cosh(t)\leq\frac{1}{t}-1$ for all $t>0$, which implies \eqref{ineq_gen}.
\end{proof}
We note that the restriction $x\in B\left(p,\frac{r_{inj}(p)}{2}\right)$ is somehow natural. We may think of the unit $2$-dimensional standard sphere and coordinates $(\theta,\phi)\in[0,\pi]\times[0,2\pi)$ and $p$ being the north pole ($\theta=0$). In this case $r_{inj}=\pi$. Hence $\delta_p(\theta,\phi)=\theta$ and $\Delta\delta_p(\theta,\phi)=\frac{1}{\tan(\theta)}$. Then, for any $(\theta,\phi)\in\left[0,\frac{\pi}{2}\right]\times[0,2\pi)$, $0\leq\Delta\delta_p(\theta,\phi)\leq\frac{1}{\delta_p(\theta,\phi)}$, which in particular implies $|\Delta\delta_p(\theta,\phi)|\leq\frac{1}{\delta_p(\theta,\phi)}$. This last inequality is no longer true for any $(\theta,\phi)\in\left(\frac{\pi}{2},\pi\right]\times[0,2\pi)$. Actually, it stills remains true for $\theta\in\left(\frac{\pi}{2},\theta^*\right]$, for some $\theta^*\in\left(\frac{\pi}{2},\pi\right)$ ($\theta^*\approx 2.02876$), but fails for $\theta\in(\theta^*,\pi]$.

We are now ready to prove Theorem \ref{main_general}.

\begin{proof}[Proof of Theorem \ref{main_general}]
We first apply Theorem \ref{corollary_small_annuli} with $a=\min\left\{\frac{1}{\kappa},\frac{r_{inj,\Omega}}{2}\right\}$ (if $\kappa=0$, then we take $a=\frac{r_{inj,\Omega}}{2}$). We take $X=\Omega$ endowed with the induced Riemannian distance, and with the measure $\varsigma$ defined as the restriction to $\Omega$ of the Lebesgue measure of $M$, namely $\varsigma(E)=|E\cap\Omega|$ for all measurable set $E$.

\noindent{\bf Step 1 (large $j$).} From Lemma \ref{large_n} we deduce that there exists $j_{\Omega}\in\mathbb N$ such that for all $j\geq j_{\Omega}$ there exists a sequence $\left\{A_i\right\}_{i=1}^{4j}$ of $4j$ annuli such that $2A_i$ are pairwise disjoint and 
\begin{equation}\label{ineq_gen_1}
|\Omega\cap A_i|\geq c\frac{|\Omega|}{4j}.
\end{equation}
The constant $c$ depends only on $\Gamma$ of Theorem \ref{corollary_small_annuli}, hence it depends only on the dimension and can be determined explicitly (see \cite{asma_conf}). Since we have $4j$ annuli, we can pick at least $2j$ of them such that
\begin{equation}\label{ineq_gen_2}
|\Omega\cap 2A_i|\leq \frac{|\Omega|}{j}.
\end{equation}
Among these last $2j$ annuli, we can pick at least $j$ of them such that
\begin{equation}\label{ineq_gen_3}
|\partial\Omega\cap 2A_i|\leq \frac{|\partial\Omega|}{j}.
\end{equation}
We take this family of $j$ annuli, and denote it by $\left\{A_i\right\}_{i=1}^j$.


Subordinated to this decomposition  we construct a family of $j$ disjointly supported functions $u_1,...,u_j$. If $u_1,...,u_j$ belong to $H^2(\Omega)$, then from \eqref{minmax}
\begin{equation}\label{minmax2}
\mu_j\leq\max_{i=1,...,j}\frac{\int_{\Omega}|D^2u_i|^2+{\rm Ric}(\nabla u_i,\nabla u_i)dv}{\int_{\Omega}u_i^2dv},
\end{equation}
Thus, in order to estimate $\mu_j$ it is sufficient to estimate the Rayleigh quotient of each of the test functions. 

Let  $f:[0,\infty)\rightarrow[0,1]$ be defined as follows:
\begin{equation}\label{f}
f(t)=
\begin{cases}
4(t-1)^2(4t-1), & t\in\left[\frac{1}{2},1\right],\\
1, & t\in\left[0,\frac{1}{2}\right],\\
0, & t\in[1,+\infty[.
\end{cases}
\end{equation}

By construction $f\in C^{1,1}[0,+\infty)$. Moreover $f\in C^2(\left[\frac{1}{2},1\right])$. We consider test functions of the form $f(\eta\delta_p(x))$ for some $\eta\in\mathbb R$ and $p\in M$. We note that
\begin{equation}\label{nabla_f}
\nabla f(\eta\delta_p(x))=\eta f'(\eta\delta_p(x))\nabla\delta_p(x)
\end{equation}
and
\begin{equation}\label{delta_f}
\Delta f(\eta\delta_p(x))=\eta^2 f''(\eta\delta_p(x))+\eta f'(\eta\delta_p(x))\Delta\delta_p(x).
\end{equation}
In \eqref{delta_f} we have used the fact that $|\nabla\delta_p(x)|=1$ for almost all $x\in M$, the equality holding pointwise in $M\setminus(\left\{p\right\}\cup{\rm cut}(p))$). Standard computations show that
\begin{equation}\label{f1}
|f'(t)|\leq 3
\end{equation}
and
\begin{equation}\label{f2}
|f''(t)|\leq 24.
\end{equation}
Let now $A_i$ be an annulus of the family $\left\{A_i\right\}_{i=1}^j$. We have two possibilities. Either $A_i$ is a proper annulus with  $0<r_i<R_i\leq\frac{r_{inj,\Omega}}{4}$, or is a ball of radius $0<r_i\leq \frac{r_{inj,\Omega}}{4}$.
\begin{enumerate}[{\bf Case} \bf a]
\item {\bf (ball).} Assume that $A_i$ is a ball of radius $0<r_i\leq\frac{r_{inj,\Omega}}{4}$ and center $p_i$. Associated to $A_i$ we define a function $u_i$ as follows
\begin{equation}\label{ball}
u_i(x)=
\begin{cases}
1, & 0\leq\delta_{p_i}(x)\leq\frac{r_i}{2}\\
f(\frac{\delta_{p_i}(x)}{2r_i}), & \frac{r_i}{2}\leq\delta_{p_i}(x)\leq r_i\\
0, & {\rm otherwise}. 
\end{cases}
\end{equation}
By construction, ${u_i}_{|_{\Omega}}\in H^2(\Omega)$. Standard computations (see \eqref{nabla_f}-\eqref{f2}) and Lemma \ref{lem_lap_small_ball} show that 
\begin{equation}\label{grad_ball}
|\nabla u_i|\leq \frac{3}{r_i}
\end{equation}
and
\begin{equation}\label{lap_ball}
|\Delta u_i|\leq\frac{6}{r_i^2}+\frac{3}{r_i}|\Delta\delta_{p_i}(x)|\leq\frac{3(n+1)}{r_i^2}+\frac{3(n-1)\kappa}{r_i}.
\end{equation}
When estimating the Rayleigh quotient of $u_i$ we will also need to estimate $|\nabla|\nabla u_i|^2|$. We have that
$$
|\nabla u_i(x)|^2=\frac{f'(\delta_{p_i}(x)/2r_i)^2}{4r_i^2},
$$
hence
\begin{equation}\label{nablanabla2_0}
|\nabla |\nabla u_i(x)|^2|=\frac{|f'(\delta_{p_i}(x)/2r_i)f''(\delta_{p_i}(x)/2r_i)|}{4r_i^3}\leq\frac{36}{r_i^3}.
\end{equation}
\item {\bf (annulus).} Assume that $A_i$ is a proper annulus  of radii $0<r_i<R_i\leq\frac{r_{inj,\Omega}}{4}$ and center $p_i$. Associated to $A_i$ we define a function $u_i$ as follows
\begin{equation}\label{prop_annulus}
u_i(x)=
\begin{cases}
1-f(\frac{\delta_{p_i}(x)}{r_i}), & \frac{r_i}{2}\leq\delta_{p_i}(x)\leq r_i\\
1 , & r_i\leq\delta_{p_i}(x)\leq R_i\\
f(\frac{\delta_{p_i}(x)}{2R_i}), & R_i\leq\delta_{p_i}(x)\leq 2R_i\\
0, & {\rm otherwise}.
\end{cases}
\end{equation}
By construction, ${u_i}_{|_{\Omega}}\in H^2(\Omega)$. Standard computations (see \eqref{nabla_f}-\eqref{f2}) show that
\begin{equation}
|\nabla u_i(x)|\leq 
\begin{cases}
\frac{3}{R_i}, & R_i\leq\delta_{p_i}(x)\leq 2R_i,\\
\frac{6}{r_i}, & \frac{r_i}{2}\leq\delta_{p_i}(x)\leq r_i,\\
0, & {\rm otherwise}.
\end{cases}
\end{equation}
In any case then
\begin{equation}\label{grad_ann}
|\nabla u_i(x)|\leq \frac{6}{r_i}.
\end{equation}
Moreover, from Lemma \ref{lem_lap_small_ball} we have
\begin{equation}\label{grad_ann_0}
|\Delta u_i(x)|\leq
\begin{cases}
\frac{3(n+1)}{R_i^2}+\frac{3(n-1)\kappa}{R_i}, & R_i\leq\delta_{p_i}(x)\leq 2R_i,\\
\frac{12(n+1)}{r_i^2}+\frac{6(n-1)\kappa}{r_i}, & \frac{r_i}{2}\leq\delta_{p_i}(x)\leq r_i,\\
0, & {\rm otherwise}.
\end{cases}
\end{equation}
In any case then
\begin{equation}\label{lap_ann}
|\Delta u_i(x)|\leq \frac{12(n+1)}{r_i^2}+\frac{6(n-1)\kappa}{r_i}.
\end{equation}

We will also need an estimate on $|\nabla|\nabla u_i|^2|$. As for \eqref{nablanabla2_0} we find that
\begin{equation}\label{nablanabla2_0_1}
|\nabla |\nabla u_i(x)|^2|\leq
\begin{cases}
\leq\frac{36}{R_i^3}, & R_i\leq\delta_{p_i}(x)\leq 2R_i,\\
\leq\frac{288}{r_i^3}, & \frac{r_i}{2}\leq\delta_{p_i}(x)\leq r_i,\\
0, & {\rm otherwise}.
\end{cases}
\end{equation}
In any case then
\begin{equation}\label{nablanabla2}
|\nabla |\nabla u_i(x)|^2|\leq \frac{288}{r_i^3}.
\end{equation}
\end{enumerate}

We  also need an upper bound for the volume of $2A_i$. Since the outer radius of $2A_i$ is by construction smaller than $\frac{1}{\kappa}$, we have, from the volume comparison and standard calculus that
\begin{equation}\label{hyp_vol}
|2A_i|\leq 2^n\sinh(1)^{n-1}\omega_n R_i^n.
\end{equation}
From Bochner's formula \eqref{bochner_f} we deduce that
\begin{multline}\label{bochner_main}
\int_{\Omega\cap 2A_i}|D^2u_i|^2+{\rm Ric}(\nabla u_i,\nabla u_i)dv=\int_{\Omega\cap 2A_i}\frac{1}{2}\Delta(|\nabla u_i|^2)-\langle\nabla\Delta u_i,\nabla u_i\rangle dv\\
=\int_{\partial\Omega\cap 2A_i}\frac{1}{2}\frac{\partial|\nabla u_i|^2}{\partial\nu}-\Delta u_i\frac{\partial u_i}{\partial\nu}d\sigma+\int_{\Omega\cap 2A_i}(\Delta u_i)^2dv\\
\leq \int_{\partial\Omega\cap 2A_i}\frac{1}{2}|\nabla|\nabla u_i|^2|+|\Delta u_i||\nabla u_i|d\sigma+\int_{\Omega\cap 2A_i}(\Delta u_i)^2dv
\end{multline}
We note that the boundary integrals are taken on $\partial\Omega\cap 2A_i$ since by construction $u_i\in H^2_0(2A_i)$. From \eqref{ineq_gen_3}, \eqref{grad_ball}, \eqref{lap_ball},  \eqref{nablanabla2_0}, \eqref{grad_ann}, \eqref{lap_ann} and \eqref{nablanabla2} we deduce that
\begin{multline}
\int_{\Omega\cap 2A_i}|D^2u_i|^2+{\rm Ric}(\nabla u,\nabla u)dv\\
\leq \int_{\Omega\cap 2A_i}(\Delta u_i)^2dv+\frac{|\partial\Omega|}{j}\left(\frac{18(4n+5)}{r_i^3}+\frac{36(n-1)\kappa}{r_i^2}\right)\\
\leq\int_{\Omega\cap 2A_i}(\Delta u_i)^2dv+\frac{|\partial\Omega|}{j}18\left(\frac{5n+4}{r_i^3}\right).
\end{multline}
where the last inequality follows from the fact that $r_i\leq R_i\leq\frac{1}{2\kappa}$ .

Corollary \ref{corollarygny0} gives us information on the size of the radius $r_i$, in fact 
\begin{equation}
r_i\geq\frac{1}{2}\tilde r:=\frac{1}{2}\inf\mathcal B
\end{equation}
where
\begin{equation}
\mathcal B:=\left\{r\in\mathbb R:V(r)\geq \frac{c|\Omega|}{j}\right\}.
\end{equation}
We observe that each $r\in\mathcal B$ is such that
$$
\frac{c|\Omega|}{j}\leq V(r)=\sup_{x\in\Omega}|B(x,r)\cap\Omega|\leq |B(x,r)|\leq |B(p',r)|_{\kappa}
$$
by volume comparison, where $|B(p',r)|_{\kappa}$ denotes the volume of the ball of radius $r$ in the space form of constant curvature $-\kappa^2$. If $\kappa=0$ then each $r\in\mathcal B$ is such that
$$
\frac{c|\Omega|}{j}\leq \omega_n r^n. 
$$
Hence any $r\in\mathcal B$ is such that
$$
r\geq\left(\frac{c|\Omega|}{\omega_n j}\right)^{\frac{1}{n}},
$$
therefore
\begin{equation}\label{lower_ri}
r_i\geq\frac{1}{2}\left(\frac{c|\Omega|}{\omega_n j}\right)^{\frac{1}{n}}.
\end{equation}
If $\kappa>0$, then $\tilde r\leq 2r_i\leq\frac{2}{\kappa}$ by construction, and since $\tilde r=\inf\mathcal B$, from volume comparison and standard calculus (see also \eqref{hyp_vol})
$$
\frac{c|\Omega|}{j}\leq \sinh(2)^{n-1}\omega_n \tilde r^n.
$$
Therefore
\begin{equation}\label{lower_ri_2}
r_i\geq \frac{\tilde r}{2}\geq\frac{1}{2}\left(\frac{c|\Omega|}{\sinh(2)^{n-1}\omega_n j}\right)^{\frac{1}{n}}.
\end{equation}
We note that \eqref{lower_ri} implies \eqref{lower_ri_2} which holds true for any $\kappa\geq 0$. We conclude that
\begin{equation}\label{boundary_est}
\int_{\Omega\cap 2A_i}|D^2u_i|^2+{\rm Ric}(\nabla u,\nabla u)dv\leq \int_{\Omega\cap 2A_i}(\Delta u_i)^2dv+\alpha_n\frac{|\partial\Omega|}{j}\left(\frac{j}{|\Omega|}\right)^{\frac{3}{n}}
\end{equation}
where
\begin{equation}\label{alpha_n}
\alpha_n=144(5n+4)\left(\frac{\omega_n\sinh(2)^{n-1}}{c}\right)^{\frac{3}{n}}
\end{equation}
In order to complete the estimates, it remains to bound the term $\int_{\Omega\cap 2A_i}(\Delta u_i)^2dv$.  We need to distinguish the case  $n=2,3,4$ and $n>4$.
\begin{enumerate}[{\bf Case} \bf a']
\item {\bf (lower dimensions).} Let $n\leq 4$. We note that in this case it irrelevant to know that $|2A_i\cap\Omega|\leq\frac{|\Omega|}{j}$. This fact is crucial only for higher dimensions. 

If $A_i$ is a ball of radius $r_i\leq\frac{1}{2\kappa}$, and hence $|2A_i|\leq 2^n\sinh(1)^{n-1}\omega_nr_i^n$, we have
\begin{multline}
\int_{\Omega\cap 2A_i}(\Delta u_i)^2dv\leq \int_{2A_i}(\Delta u_i)^2dv\leq\left(\frac{3(n+1)}{r_i^2}+\frac{3(n-1)\kappa}{r_i}\right)^2|2A_i|\\
\leq \left(\frac{3(n+1)}{r_i^2}+\frac{3(n-1)\kappa}{r_i}\right)^22^n\omega_n\sinh(1)^{n-1}r_i^n\\
\leq 18\left(\frac{(n+1)^2}{r_i^{4-n}}+\frac{(n-1)^2\kappa^2}{r_i^{2-n}}\right)2^n\sinh(1)^{n-1}\omega_n\\
\leq 18\left(\frac{(n+1)^2}{r_i^{4-n}}+\frac{(n-1)^2}{4r_i^{4-n}}\right)2^n\sinh(1)^{n-1}\omega_n\\
=\frac{18 (4(n+1)^2+(n-1)^2)2^{n-2}\omega_n\sinh(1)^{n-1}}{r_i^{4-n}},
\end{multline}
where in the last line we have used the fact $2r_i\leq\frac{1}{\kappa}$ if $\kappa>0$. From \eqref{lower_ri_2} we obtain that
\begin{equation}\label{ineq_ball_small_n}
\int_{\Omega\cap 2A_i}(\Delta u_i)^2dv\leq \beta_n'\left(\frac{j}{|\Omega|}\right)^{\frac{4}{n}-1},
\end{equation}
where we set
\begin{equation}\label{beta_n}
\beta_n'=72\omega_n\sinh(1)^{n-1}(4(n+1)^2+(n-1)^2)\left(\frac{\sinh(2)\omega_n}{c}\right)^{\frac{4}{n}-1}.
\end{equation}

In the very same way it is possible to prove that \eqref{ineq_ball_small_n} holds if $A_i$ is a proper annulus, possibly with a different $\beta_n'$, but still dependent only on $n$. It is sufficient to split the integral $\int_{2A_i}(\Delta u_i)^2dv$ as the sum of the integrals of $(\Delta u_i)^2$ on the annulus $\frac{r_i}{2}\leq\delta_{p_i}(x)\leq r_i$ and on the annulus $R_i\leq\delta_{p_i}(x)\leq 2R_i$ and use \eqref{grad_ann_0} and \eqref{hyp_vol} in each case.
\item {\bf (higher dimensions).} Let $n>4$. Let $A_i$ be a ball of radius $r_i$. Then, by H\"older's inequality,
\begin{multline}\label{ineq_ball_large_n}
\int_{\Omega\cap 2A_i}(\Delta u_i)^2dv\leq |\Omega\cap 2A_i|^{1-\frac{4}{n}}\left(\int_{\Omega\cap 2A_i}(\Delta u_i)^{\frac{n}{2}}dv\right)^{\frac{4}{n}}\\
\leq  |\Omega\cap 2A_i|^{1-\frac{4}{n}}\left(\int_{2A_i}(\Delta u_i)^{\frac{n}{2}}dv\right)^{\frac{4}{n}}\\
\leq \left(\frac{|\Omega|}{j}\right)^{1-\frac{4}{n}}\left(\frac{3(n+1)}{r_i^2}+\frac{3(n-1)\kappa}{r_i}\right)^2|2A_i|^{\frac{4}{n}}\\
\leq\left(\frac{|\Omega|}{j}\right)^{1-\frac{4}{n}} \frac{72 (4(n+1)^2+(n-1)^2)}{r_i^4} (\omega_n\sinh(1)^{n-1}4^nr_i^n)^{\frac{4}{n}}\\
=\beta_n'' \left(\frac{|\Omega|}{j}\right)^{1-\frac{4}{n}},
\end{multline}
where
\begin{equation}\label{beta1_n}
\beta_n''=72(4(n+1)^2+(n-1)^2)(\omega_n\sinh(1)^{n-1})^{\frac{4}{n}}.
\end{equation}
In the same way it is possible to prove that \eqref{ineq_ball_large_n} holds if $A_i$ is a proper annulus, possibly with a different $\beta_n''$, but still dependent only on $n$ . It is sufficient to split the integral $\int_{2A_i}(\Delta u_i)^2dv$ as the sum of the integrals of $(\Delta u_i)^2$ on the annulus $\frac{r_i}{2}\leq\delta_{p_i}(x)\leq r_i$ and on the annulus $R_i\leq\delta_{p_i}(x)\leq 2R_i$ and use \eqref{grad_ann_0} and \eqref{hyp_vol} on each annulus.
\end{enumerate}

We have then proved that, for all dimensions $n\geq 2$,
\begin{equation}\label{general_bound_laplacian}
\int_{\Omega\cap 2A_i}(\Delta u_i)^2dv\leq \beta_n \left(\frac{j}{|\Omega|}\right)^{\frac{4}{n}-1},
\end{equation}
where $\beta_n$ is a constant which depends only on $n$ and is explicitly computable. This concludes the estimate of the numerator of the Rayleigh quotient for $u_i$. As for the denominator we have
$$
\int_{\Omega\cap 2A_i}u_i^2dv\geq \int_{\Omega\cap A_i}u_i^2dv=|\Omega\cap A_i|\geq c\frac{|\Omega|}{4j}.
$$
Then, we have for all $i=1,...,j$
\begin{multline}\label{general_bound_laplacian_0}
\frac{\int_{\Omega}|D^2u_i|^2+{\rm Ric}(\nabla u_i,\nabla u_i)dv}{\int_{\Omega}u_i^2dv}\leq\frac{4\beta_n}{c}\left(\frac{j}{|\Omega|}\right)^{\frac{4}{n}}+\frac{4\alpha_n}{c}\frac{|\partial\Omega|}{|\Omega|}\left(\frac{j}{|\Omega|}\right)^{\frac{3}{n}}\\
\leq \left(\frac{4\beta_n+3\alpha_n}{c}\right)\left(\frac{j}{|\Omega|}\right)^{\frac{4}{n}}+\frac{\alpha_n}{c}\frac{|\partial\Omega|^4}{|\Omega|^4},
\end{multline}
where we have used Young's inequality in the last passage. We have proved then that
\begin{equation}\label{large_j}
\mu_j\leq A_n\left(\frac{j}{|\Omega|}\right)^{\frac{4}{n}}+\frac{B_n}{2}\frac{|\partial\Omega|^4}{|\Omega|^4},
\end{equation}
 for all $j\geq j_{\Omega}$, where
\begin{equation}\label{A_n}
A_n:=\left(\frac{4\beta_n+3\alpha_n}{c}\right)
\end{equation}
and
\begin{equation}\label{B_n}
B_n:=2\frac{\alpha_n}{c}.
\end{equation}

\noindent{\bf Step 2 (small $j$).} Let now $j<j_{\Omega}$ be fixed. By using Theorem \ref{corollary_small_annuli} as in Step 1, we find that there exists a sequence of $4j$ sets $\left\{A_i\right\}_{i=1}^{4j}$ such that $|\Omega\cap A_i|\geq c\frac{|\Omega|}{4j}$. If the sets $A_i$ are annuli, we can proceed as in Step 1 and deduce the validity of \eqref{large_j}. Assume now that $j$ is such that the sets $A_i$ of the decomposition are of the form
$$
A_i=B(x_1^i,r_0)\cup\cdots\cup B(x_{l_i}^i,r_0),
$$
where $r_0=\frac{4a}{1600}$, $D_i=A_i^{4r_0}$ are pairwise disjoint, and $\delta_{x_l^i}(x_k^i)\geq 4r_0$ if $l\ne k$. By definition $D_i=B(x_1^i,5r_0)\cup\cdots\cup B(x_{l_i}^i,5r_0)$ and $5r_0\leq\frac{r_{inj,\Omega}}{160}$. Since we have $4j$ disjoint sets $D_i$, we can pick $j$ of them such that $|\Omega\cap D_i|\leq\frac{|\Omega|}{j}$ and $|\partial\Omega\cap D_i|\leq\frac{|\partial\Omega|}{j}$. We take from now on this family of $j$ capacitors. Note that $D_i$ is a disjoint union of $l_i$ balls $B(x_1^i,5r_0), \cdots, B(x_{l_i}^i,5r_0)$ of radius $5r_0$. Associated to each $B(x_k^i,5r_0)$, $k=1,...,l$ we construct test functions $u_k^i$ as in \eqref{ball}. Then we define the function $u_i$ associated with the capacitor $(A_i,D_i)$ by setting $u_i=u_k^i$ on $B(x_k^i,5r_0)$. We have $j$ disjointly supported test functions in $H^2(\Omega)$. We estimate the Rayleigh quotient of each of the $u_i$ as in Step 1. As in \eqref{grad_ball}, \eqref{lap_ball} and \eqref{nablanabla2_0} we estimate $|\nabla u_i|,|\Delta u_i|$ and $|\nabla|\nabla u_i|^2|$. In particular, we find a universal constant $c_0$  such that
\begin{equation}\label{est_small_j}
|\nabla u_i|\leq\frac{c_0}{r_0}\,,\ \ \ |\Delta u_i|\leq\frac{c_0}{r_0^2}+\frac{c_0\kappa}{r_0}\,,\ \ \ |\nabla|\nabla u_i|^2|\leq\frac{c_0}{r_0^3}.
\end{equation}
By using \eqref{bochner_main}, as we have done for \eqref{boundary_est}, \eqref{ineq_ball_small_n} and \eqref{ineq_ball_large_n}, we find that
$$
\frac{\int_{\Omega}|D^2u_i|^2+{\rm Ric}(\nabla u_i,\nabla u_i)dv}{\int_{\Omega}u_i^2dv}\leq \frac{A_n}{a^4}+\frac{B_n}{a^3}\frac{|\partial\Omega|}{|\Omega|},
$$
for all $i=1,...,j$, which implies, by using Young's inequality as in \eqref{general_bound_laplacian_0}, that
\begin{equation}\label{small_j}
\mu_j\leq \frac{3A_n}{4a^4}+\frac{B_n}{a^4}\frac{|\partial\Omega|}{4|\Omega|}.
\end{equation}

The proof of \eqref{main_estimate_general} follows by combining \eqref{large_j} and \eqref{small_j}, possibly re-defining the constants $A_n,B_n$.
\end{proof}

\begin{rem}
We point out that in the proof of Theorem \ref{main_general} the inequality \eqref{small_j} appears. Apparently this may look like a nonsense, in fact the right-hand side of the inequality does not depend on $j$. However, note that this situation may occur only for a finite number of eigenvalues $\mu_j$, since, starting from a certain $j_{\Omega}$ (of which it is possible in principle to give a lower bound), the capacitors of the decomposition given by \eqref{corollary_small_annuli} are of the form $iii)-a)$, hence the estimate \eqref{large_j} holds starting from a certain $j_{\Omega}$.

\end{rem}

In the next subsections we will present more refined estimates under additional assumptions.




\subsection{Manifolds with ${\rm Ric}\geq 0$ and $n=2,3,4$}\label{sub_small}
In this subsection we establish upper bounds for bounded domains in Riemannian manifolds satisfying ${\rm Ric}\geq 0$. In this case we will use Theorem \ref{gny}, noting that  the constant $\Gamma$ depends only on $n$. Moreover, test functions in this case are not given in terms of the distance from a point, thus avoiding the obstruction of the lack of regularity in correspondence of the cut-locus.

We state the main theorem of this subsection.

\begin{thm}\label{low_dim_positive}
Let $(M,g)$ be a complete $n$-dimensional smooth Riemannian manifolds with ${\rm Ric}\geq 0$ and $n=2,3,4$. Let $\Omega$ be a bounded domain of $M$ with $C^1$ boundary. Then
\begin{equation}\label{est_dim_positive}
\mu_j\leq A_n\left(\frac{j}{|\Omega|}\right)^{\frac{4}{n}},
\end{equation}
for all $j\in\mathbb N$, the constant $A_n$ depending only on the dimension.
\end{thm}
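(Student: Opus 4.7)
The plan is to specialize the strategy of Theorem \ref{main_general} to ${\rm Ric}\geq 0$ and $n\leq 4$, eliminating both the $r_{inj,\Omega}$ and the $|\partial\Omega|$ contributions.

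First I would apply Theorem \ref{gny} directly to $\Omega$ equipped with the induced Riemannian distance and the restriction of the Lebesgue measure. Since ${\rm Ric}\geq 0$, Bishop--Gromov volume comparison gives $|B(p,r)|\leq \omega_n r^n$ for every $p\in M$ and every $r>0$, so hypothesis $i)$ of Theorem \ref{gny} is satisfied with a covering constant $\Gamma$ depending only on $n$. In contrast with Theorem \ref{main_general}, there is no need to impose an upper bound on the outer radii of the capacitors, which is exactly what kills the $r_{inj,\Omega}$ dependence. This yields, for every $j$, a family of $j$ annuli $\{A_i\}_{i=1}^j$ with $\{2A_i\}$ pairwise disjoint and $|A_i\cap\Omega|\geq c|\Omega|/j$, together with the lower bound $r_i\geq \frac{1}{2}(c|\Omega|/(\omega_n j))^{1/n}$ on the inner radii from Corollary \ref{corollarygny0}.

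Subordinated to each $A_i$ I would build a test function $u_i\in H^2(\Omega)$, supported in $2A_i$, with $u_i\equiv 1$ on $A_i$ and satisfying the estimates $|\nabla u_i|\leq C_n/r_i$ and $|\Delta u_i|\leq C_n/r_i^2$. Following the hint already given in the introduction, this cutoff should \emph{not} be constructed from the Riemannian distance to a single point $p_i$, since the second derivatives of $\delta_{p_i}$ blow up on the cut-locus; a smoothing of the characteristic function of $A_i$, realized through a partition of unity subordinated to normal charts of length scale $r_i$, is a natural substitute. The crucial observation is then that, once $u_i$ is extended by zero outside its support, it is a compactly supported $H^2$ function on the boundaryless manifold $M$, so integrating Bochner's identity \eqref{bochner_f} over $M$ (with no boundary contribution) gives
\[
\int_M |D^2u_i|^2 + {\rm Ric}(\nabla u_i,\nabla u_i)\,dv = \int_M (\Delta u_i)^2\,dv.
\]
Because ${\rm Ric}\geq 0$ the integrand on the left is pointwise non-negative, and hence
\[
\int_\Omega |D^2u_i|^2 + {\rm Ric}(\nabla u_i,\nabla u_i)\,dv\leq \int_M (\Delta u_i)^2\,dv,
\]
which completely avoids the $\int_{\partial\Omega\cap 2A_i}$ terms that had produced the $|\partial\Omega|^4/|\Omega|^4$ contribution in Theorem \ref{main_general}.

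To close the argument, I would combine $|\Delta u_i|\leq C_n/r_i^2$ with the volume bound $|2A_i|\leq C_n r_i^n$ (Bishop--Gromov once more) to obtain $\int_M(\Delta u_i)^2\,dv\leq C_n r_i^{n-4}$; for $n\leq 4$ this quantity is non-increasing in $r_i$, so the lower bound on $r_i$ gives $\int_M(\Delta u_i)^2\,dv\leq C_n(j/|\Omega|)^{(4-n)/n}$. Dividing by $\int_\Omega u_i^2\,dv\geq c|\Omega|/j$ bounds each Rayleigh quotient by $A_n(j/|\Omega|)^{4/n}$, and the min-max formula \eqref{minmax} applied to the span of the disjointly supported $u_1,\dots,u_j$ yields \eqref{est_dim_positive}. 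The hard part will be the explicit construction of the cutoffs $u_i$ with the sharp pointwise bounds on $|\nabla u_i|$ and $|\Delta u_i|$ while bypassing the distance function, so that everything does indeed remain at the level of $H^2(M)$; the dimensional restriction $n\leq 4$ is then forced by the sign of $n-4$ in the Laplacian estimate, exactly as in Case a' of the proof of Theorem \ref{main_general}.
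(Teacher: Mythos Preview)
Your overall strategy matches the paper's proof almost exactly: apply Theorem~\ref{gny} directly (with $\Gamma$ depending only on $n$ thanks to Bishop--Gromov), use Corollary~\ref{corollarygny0} for the lower bound on $r_i$, exploit ${\rm Ric}\geq 0$ to pass from $\int_{\Omega\cap 2A_i}$ to $\int_{2A_i}$ by nonnegativity of the integrand, then apply Bochner on the boundaryless piece to convert to $\int(\Delta u_i)^2$, and finish with the volume estimate $|2A_i|\leq C_n r_i^n$ together with $n\leq 4$.

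The one genuine gap is your proposed construction of the cutoffs. You suggest ``a smoothing of the characteristic function of $A_i$, realized through a partition of unity subordinated to normal charts of length scale $r_i$''. This does not work as stated: normal coordinates give uniform control on the metric and its derivatives only on scales below the injectivity radius, and the whole point of this theorem is to make the estimate independent of $r_{inj,\Omega}$. If $r_i$ happens to be large compared to the injectivity radius (which nothing prevents), your charts will not patch together with Laplacian bounds depending only on $n$. The paper resolves this by invoking a nontrivial external ingredient, namely the Cheeger--Colding cutoff construction (recorded here as Lemma~\ref{cheeger}): on any complete manifold with ${\rm Ric}\geq 0$, for every $p$ and every $r>0$ there exists $\phi_r\in C^\infty(M)$ with $\phi_r\equiv 1$ on $B(p,r/2)$, ${\rm supp}\,\phi_r\subset B(p,r)$, $|\nabla\phi_r|\leq C(n)/r$, and $|\Delta\phi_r|\leq C(n)/r^2$, with no assumption on the injectivity radius. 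The test functions are then $u_i=\phi_{r_i}$ for balls and $u_i=\phi_{R_i}-\phi_{r_i}$ for proper annuli. You correctly flagged this step as ``the hard part'', but it is not something you can improvise with normal coordinates; it is precisely the content of \cite{cheeger_colding} (see also \cite{bianchi_setti,guneysu}).
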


The proof is similar to that of Theorem \ref{main_general}. However we will use different test functions. To do so, we adapt a construction originally contained in \cite[Theorem 6.33]{cheeger_colding}. We have the following lemma (see \cite[Theorem 2.2]{guneysu}, see also \cite{bianchi_setti}).

\begin{lem}\label{cheeger}
Let $(M,g)$ be a complete $n$-dimensional smooth Riemannian manifold with ${\rm Ric}\geq 0$. Then, for any $p\in M$ and $r>0$ there exists a function $\phi_r:M\rightarrow[0,1]$, $\phi\in C^{\infty}(M)$, such that
\begin{enumerate}
\item $\phi_r\equiv 1$ on $B\left(p,\frac{r}{2}\right)$;
\item ${\rm supp}(\phi_r)\subset B(p,r)$;
\item $|\nabla\phi_r|\leq \frac{C(n)}{r}$;
\item $|\Delta\phi_r|\leq \frac{C(n)}{r^2}$,
\end{enumerate}
the constant $C(n)$ depending only on the dimension.
\end{lem}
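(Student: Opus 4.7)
The plan is to build $\phi_r$ by composing a smooth one-variable profile with $\delta_p^2$, rather than with $\delta_p$, and then mollifying to obtain $C^\infty$ regularity. Squaring the distance removes the singularity of the radial field at $p$ and makes the Laplacian-comparison bound of Theorem \ref{lap_comp_0} available without the blow-up at the center, so that the estimates come out with the correct $1/r$ and $1/r^2$ scaling.

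First I fix a smooth non-increasing $\chi\colon[0,\infty)\to[0,1]$ with $\chi\equiv 1$ on $[0,1/4]$ and $\chi\equiv 0$ on $[1,\infty)$, and $|\chi'|,|\chi''|\le C$, and set $\tilde\phi_r(x):=\chi(\delta_p(x)^2/r^2)$. At every point where $\delta_p$ is smooth one has
$$
\nabla\tilde\phi_r=\frac{2\chi'\,\delta_p}{r^2}\,\nabla\delta_p,\qquad \Delta\tilde\phi_r=\frac{4\chi''\,\delta_p^2}{r^4}+\frac{2\chi'}{r^2}\bigl(1+\delta_p\,\Delta\delta_p\bigr).
$$
Since $\mathrm{Ric}\ge 0$, Theorem \ref{lap_comp_0} gives $\delta_p\,\Delta\delta_p\le n-1$ in the barrier sense, and $\tilde\phi_r$ is supported where $\delta_p\le r$; hence the pointwise bounds $|\nabla\tilde\phi_r|\le C(n)/r$ and $|\Delta\tilde\phi_r|\le C(n)/r^2$ hold away from $\mathrm{cut}(p)$. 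Because $\delta_p^2$ is globally semiconcave, the distributional Laplacian of $\tilde\phi_r$ satisfies the same inequality, with at most a non-positive singular measure concentrated on the cut locus.

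To upgrade $\tilde\phi_r$ to a genuinely smooth function with the same bounds I would invoke the heat-kernel mollification of Cheeger and Colding \cite{cheeger_colding}, in the form carried out by G\"uneysu \cite{guneysu} and Bianchi-Setti \cite{bianchi_setti}: convolving $\tilde\phi_r$ with the heat kernel at time $t\ll r^2$ produces a smooth function whose gradient and Laplacian still satisfy the desired pointwise bounds, up to constants depending only on $n$. The key ingredients are volume doubling (which is available on manifolds with $\mathrm{Ric}\ge 0$) and the parabolic maximum principle, which together imply that smoothing a semiconcave function with a one-sided distributional Laplacian bound of order $1/r^2$ yields a $C^\infty$ function with a two-sided pointwise Laplacian bound of the same order. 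After rescaling $\chi$ so that $\chi\equiv 1$ on $[0,1/2]$ and absorbing the losses into $C(n)$, this produces the $\phi_r$ of the statement.

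The main obstacle is this final smoothing step. Laplacian comparison yields only a one-sided upper bound on $\Delta\delta_p$, and the singular part of $\Delta\tilde\phi_r$ supported on $\mathrm{cut}(p)$ is signed (non-positive where $\chi'\le 0$); turning this one-sided distributional estimate into a two-sided pointwise estimate on $\Delta\phi_r$ without degrading the $1/r^2$ scaling requires the delicate comparison of the mollifier with the heat kernel worked out in \cite{cheeger_colding}. I would rely on those results as a black box, with the observations above explaining why the construction specializes cleanly in the present setting and why no dependence on the injectivity radius enters.
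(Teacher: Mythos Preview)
There is a genuine gap in your second paragraph. You claim that, away from $\mathrm{cut}(p)$, the pointwise two-sided bound $|\Delta\tilde\phi_r|\le C(n)/r^2$ holds. It does not. From Theorem~\ref{lap_comp_0} you only get the \emph{upper} bound $\delta_p\,\Delta\delta_p\le n-1$; combined with $\chi'\le 0$ this yields only the lower bound $\Delta\tilde\phi_r\ge -C(n)/r^2$. Controlling $\Delta\tilde\phi_r$ from above would require a \emph{lower} bound on $\Delta\delta_p$, and under $\mathrm{Ric}\ge 0$ alone there is none: without an upper sectional curvature bound or a restriction to a half-injectivity-radius ball (compare Lemma~\ref{lem_lap_small_ball}), $\Delta\delta_p$ can be arbitrarily negative at points with $\delta_p(x)\in[r/2,r]$ that lie well inside the regular set. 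So the obstruction you locate at the cut locus is already present in the smooth region, and your intermediate function $\tilde\phi_r$ does not satisfy the stated estimates.

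You then propose to repair this by heat-kernel smoothing, invoking \cite{cheeger_colding,guneysu,bianchi_setti} as a black box to pass from a one-sided distributional Laplacian bound to a two-sided pointwise one. But the output of that black box is precisely the cut-off function asserted in Lemma~\ref{cheeger}; the construction of $\tilde\phi_r$ adds nothing, since the smoothing machinery does not take a one-sided Laplacian bound as input and upgrade it --- it builds the good cut-off from scratch. The paper's proof is much shorter and avoids all of this: it rescales the metric by $g_r=r^{-2}g$, so that $B(p,r)$ and $B(p,r/2)$ become unit-scale balls, $\mathrm{Ric}_{g_r}\ge 0$ is preserved, and \cite[Theorem~6.33]{cheeger_colding} directly provides a smooth cut-off with $|\nabla\phi|_{g_r}\le c(n)$ and $|\Delta_{g_r}\phi|\le c(n)$; scaling back gives the factors $1/r$ and $1/r^2$. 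The key step you are missing is simply this rescaling, which reduces the lemma to a single invocation of the cited theorem.
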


\begin{proof}
Let $p\in M$ and $r>0$ be fixed. 
Let us consider another metric on $M$, namely $g_{r}:=\frac{1}{r^2}g$. In this new metric
$$
B(p,r)=B_{g_{r}}\left(p,1\right)
$$
and
$$
B\left(p,\frac{r}{2}\right)=B_{g_{r}}\left(p,\frac{1}{2}\right).
$$
Here we denote by $B(p,r)$ and $B\left(p,\frac{r}{2}\right)$ the balls of center $p$ and radius $r$ and $\frac{r}{2}$ in the original metric $g$. Moreover, ${\rm Ric}_{g_{r}}\geq 0$, hence from Theorem 6.33 of \cite{cheeger_colding} we deduce that there exists a constant $c(n)$ and $\phi:M\rightarrow [0,1]$, $\phi\in C^{\infty}(M)$, such that
\begin{enumerate}
\item $\phi\equiv 1$ on $B_{g_{r}}\left(p,\frac{1}{2}\right)$;
\item ${\rm supp}(\phi)\subset B_{g_{r}}(p,1)$;
\item $|\nabla\phi|\leq c(n)$;
\item $|\Delta\phi|\leq c(n)$.
\end{enumerate}
Since $
\Delta_{g_{r}}=r^2\Delta$ and $|\omega|_{g_{r}}^2=r^2|\omega|$ for all $1$-forms $\omega$ on $M$,
we conclude that
$$
|\nabla \phi|^2\leq \frac{c(n)}{r}
$$
and
$$
|\Delta\phi|^2\leq\frac{c(n)}{r^2.}
$$
This concludes the proof by taking $\phi_r:=\phi$.
\end{proof}


\begin{proof}[Proof of Theorem \ref{low_dim_positive}]
We use Theorem \ref{gny}, which provides, for all indexes $j\in\mathbb N$, a family $\left\{A_i\right\}_{i=1}^j$ of annuli such that
$$
|A_i\cap\Omega|\geq c\frac{|\Omega|}{j}
$$
and the annuli $2A_i$ are pairwise disjoint. Moreover, from Corollary \ref{corollarygny0} we deduce that each annulus $A_i$ has either internal radius $r_i$ satisfying \eqref{gny-rad-est} or is a ball of radius $r_i$ satisfying \eqref{gny-rad-est}. 
In this case $\mathcal B=\left\{r\in\mathbb R:V(r)\geq c\frac{|\Omega|}{j}\right\}$ and $V(r):=\sup_{x\in\Omega}|B(x,r)\cap\Omega|$. Since ${\rm Ric}\geq 0$, from volume comparison we know that every ball $B(p,r)\subset M$ satisfies $|B(x,r)|\leq\omega_nr^n$. Hence any $r\in \mathcal B$ is such that
\begin{equation}\label{est_rad_2}
c\frac{|\Omega|}{j}\leq V(r)\leq\omega_nr^n\iff r\geq\left(\frac{c|\Omega|}{j\omega_n}\right)^{\frac{1}{n}}.
\end{equation}
Subordinated to the family $\left\{A_i\right\}_{i=1}^j$ we build a family of test functions $\left\{u_i\right\}_{i=1}^j$ in $H^2(\Omega)$ in the following way. If $A_i$ is a ball of radius $r_i$, we take $u_i=\phi_{r_i}$, where $\phi_{r_i}$ is defined in Lemma \ref{cheeger}. Hence $u_i$ is supported in $2A_i$ and $u_i\equiv 1$ on $A_i$. If $A_i$ is a proper annulus of radii $0<r_i<R_i$, we take $u_i=\phi_{R_i}-\phi_{r_i}$. Again, $u_i$ is supported in $2A_i$ and $u_i\equiv 1$ on $A_i$. The functions $\left\{{u_i}_{|_{\Omega}}\right\}_{i=1}^j$ are disjointly supported and belong to $H^2(\Omega)$. Hence, from \eqref{minmax} we deduce that
$$
\mu_j\leq\max_{i=1,...,j}\frac{\int_{\Omega}|D^2u_i|^2+{\rm Ric}(\nabla u_i,\nabla u_i)dv}{\int_{\Omega}u_i^2dv}.
$$
We estimate now the Rayleigh quotient of each of the $u_i$.  Assume that $A_i$ is a ball of radius $r_i$. We have, for the numerator
\begin{multline}\label{low_dim_step_1}
\int_{\Omega}|D^2u_i|^2+{\rm Ric}(\nabla u_i,\nabla u_i)dv=\int_{\Omega\cap 2A_i}|D^2u_i|^2+{\rm Ric}(\nabla u_i,\nabla u_i)dv\\
\leq \int_{2A_i}|D^2u_i|^2+{\rm Ric}(\nabla u_i,\nabla u_i)dv=\int_{2A_i}(\Delta u_i)^2dv\leq \frac{|2A_i|c(n)^2}{r_i^4}\\
\leq c(n)^2\omega_n r_i^{n-4}\leq c(n)^2\omega_n^{\frac{4}{n}}{c}^{1-\frac{4}{n}}\left(\frac{|\Omega|}{j}\right)^{1-\frac{4}{n}}.
\end{multline}
In the first inequality we have estimated $\int_{\Omega\cap 2A_i}|D^2u_i|^2+{\rm Ric}(\nabla u_i,\nabla u_i)dv$ with the integral on the whole ball $2A_i$, being the integrand a non-negative function. Moreover, from Bochner's formula, since $u_i\in H^2_0(2A_i)$, we have that $\int_{2A_i}|D^2u_i|^2+{\rm Ric}(\nabla u_i,\nabla u_i)dv=\int_{2A_i}(\Delta u_i)^2dv\leq \frac{|2A_i|c(n)^2}{r_i^4}$. Finally we have used \eqref{est_rad_2} since $n\leq 4$.

For the denominator we have
\begin{equation}\label{low_dim_step_2}
\int_{\Omega}u_i^2dv=\int_{\Omega\cap 2A_i}u_i^2dv\geq\int_{\Omega\cap A_i}u_i^2dv=|\Omega\cap 2A_i|\geq c\frac{|\Omega|}{j}.
\end{equation}
From \eqref{low_dim_step_1} and \eqref{low_dim_step_2} we deduce
\begin{equation}\label{low_dim_step_3}
\frac{\int_{\Omega}|D^2u_i|^2+{\rm Ric}(\nabla u_i,\nabla u_i)dv}{\int_{\Omega}u_i^2dv}\leq c(n)^2\frac{\omega_n^{\frac{4}{n}}}{{c}^{\frac{4}{n}}}\left(\frac{j}{|\Omega|}\right)^{\frac{4}{n}}.
\end{equation}
In the very same way it is possible to prove that \eqref{low_dim_step_1} holds if $A_i$ is a proper annulus, possibly with a different dimensional constant in front of the term $\left(\frac{j}{|\Omega|}\right)^{\frac{4}{n}}$. It is sufficient to split the integral $\int_{2A_i}(\Delta u_i)^2dv$ as the sum of the integrals of $(\Delta u_i)^2$ on the annulus $\frac{r_i}{2}\leq\delta_{p_i}(x)\leq r_i$ and on the annulus $R_i\leq\delta_{p_i}(x)\leq 2R_i$.

The proof is now complete.
\end{proof}

\begin{rem}
We remark that, differently from the proof of Theorem \ref{main_general}, we did not choose the annuli of the decomposition in such a way that $|\Omega\cap 2A_i|\leq\frac{|\Omega|}{j}$. In fact, in the case $n=2,3,4$, an upper bound on the size of the supports of test functions seems to be irrelevant in the estimates. Moreover, being ${\rm Ric}\geq 0$, the quadratic form $|D^2u|^2+{\rm Ric}(\nabla u,\nabla u)$ is always non-negative, hence we can estimate its integral over $\Omega\cap 2A_i$ with the whole integral over $2A_i$ and use Bochner's formula. Of course we can do this passage also for $n>4$. However, in this case we would obtain
\begin{multline}\label{low_dim_step_4}
\int_{\Omega\cap 2A_i}|D^2u_i|^2+{\rm Ric}(\nabla u_i,\nabla u_i)dv\leq \int_{2A_i}|D^2u_i|^2+{\rm Ric}(\nabla u_i,\nabla u_i)dv\\
=\int_{2A_i}(\Delta u_i)^2dv\leq \frac{|2A_i|c(n)^2}{r_i^4}\leq c(n)^2\omega_n r_i^{n-4},
\end{multline}
but $n-4>0$, and inequality \eqref{low_dim_step_4} is useless to obtain uniform estimates.

In the case $n>4$ the strategy would be rather to choose, according to Theorem \ref{gny}, a family $\left\{A_i\right\}_{i=1}^{4j}$ annuli such that $|\Omega\cap A_i|\geq c\frac{|\Omega|}{4j}$ and then choose $j$ annuli among the $4j$ of the family in such a way that $|\Omega\cap 2A_i|\leq\frac{|\Omega|}{j}$ and $|\partial\Omega\cap 2A_i|\leq\frac{|\partial\Omega|}{j}$, as in the proof of Theorem \ref{main_general}.  We build then test functions $u_i$ as in the proof of Theorem \ref{low_dim_positive}. From Bochner's formula, as in \eqref{bochner_main}, we have
\begin{multline*}
\int_{\Omega\cap 2A_i}|D^2u_i|^2+{\rm Ric}(\nabla u_i,\nabla u_i)dv\\
\leq \int_{\partial\Omega\cap 2A_i}\frac{1}{2}|\nabla|\nabla u_i|^2|+|\Delta u_i||\nabla u_i|d\sigma+\int_{\Omega\cap 2A_i}(\Delta u_i)^2dv.
\end{multline*}
Assume that $A_i$ is a ball of radius $r_i$. The term $\int_{\partial\Omega\cap 2A_i}|\Delta u_i||\nabla u_i|d\sigma$ is estimated by
\begin{equation*}
\int_{\partial\Omega\cap 2A_i}|\Delta u_i||\nabla u_i|d\sigma\leq\frac{|\partial\Omega|}{j}\frac{c(n)^2}{r_i^3}\leq \frac{|\partial\Omega|}{j} \frac{c(n)^2\omega_n^{\frac{3}{n}}}{{c}^{\frac{3}{n}}}\left(\frac{j}{|\Omega|}\right)^{\frac{3}{n}}
\end{equation*}
For the term $\int_{\Omega\cap 2A_i}(\Delta u_i)^2dv$ we have
\begin{multline}
\int_{\Omega\cap 2A_i}(\Delta u_i)^2dv\leq\left(\int_{\Omega\cap 2A_i}(\Delta u_i)^\frac{n}{2}dv\right)^{\frac{4}{n}}|\Omega\cap 2A_i|^{1-\frac{4}{n}}\\
\leq \left(\int_{2A_i}(\Delta u_i)^\frac{n}{2}dv\right)^{\frac{4}{n}}\left(\frac{|\Omega|}{j}\right)^{1-\frac{4}{n}}\leq c(n)^2\frac{|2A_i|^\frac{4}{n}}{r_i^4}\left(\frac{|\Omega|}{j}\right)^{1-\frac{4}{n}}\\
\leq 16c(n)^2\omega_n^{\frac{4}{n}}\left(\frac{|\Omega|}{j}\right)^{1-\frac{4}{n}}.
\end{multline}
Hence, as for \eqref{general_bound_laplacian_0}, we find that
\begin{equation}\label{bound_ric_hig}
\frac{\int_{\Omega}|D^2u_i|^2+{\rm Ric}(\nabla u_i,\nabla u_i)dv}{\int_{\Omega}u_i^2dv}\leq A_n\left(\frac{j}{|\Omega|}\right)^{\frac{4}{n}}+B_n \frac{|\partial\Omega|^4}{|\Omega|^4}+C_n\|\nabla|\nabla u_i|^2\|_{\infty},
\end{equation}
for some constants $A_n,B_n,C_n$ which depend only on the dimension. In the case that $A_i$ is a proper annulus, inequality \eqref{bound_ric_hig} still holds, with possibly different dimensional constants. Unfortunately an estimate of the form $|\nabla|\nabla \phi_r|^2|\leq \frac{c(n)}{r^3}$ is not available for a function $\phi_r$ as in Lemma \ref{cheeger}. If such an inequality would hold, then we would immediately have
\begin{equation}
\frac{\int_{\Omega}|D^2u_i|^2+{\rm Ric}(\nabla u_i,\nabla u_i)dv}{\int_{\Omega}u_i^2dv}\leq A_n\left(\frac{j}{|\Omega|}\right)^{\frac{4}{n}}+B_n \frac{|\partial\Omega|^4}{|\Omega|^4},
\end{equation}
and therefore
\begin{equation}\label{bound_positive_high}
\mu_j\leq A_n\left(\frac{j}{|\Omega|}\right)^{\frac{4}{n}}+B_n \frac{|\partial\Omega|^4}{|\Omega|^4}.
\end{equation}
{\bf Open problem.} Prove \eqref{bound_positive_high}  for domains in complete smooth manifolds with ${\rm Ric}\geq 0$ and $n>4$. Prove inequality \eqref{est_dim_positive} for domains in complete smooth manifolds with ${\rm Ric}\geq 0$ and $n\geq 2$.
\end{rem}




\subsection{Manifolds with ${\rm Ric}\geq 0$ and small diameter}\label{sub_small_2}

If ${\rm Ric}\geq 0$ and the diameter is sufficiently small compared to $r_{inj,\Omega}$, it is possible to build test functions for the Rayleigh quotient in terms of the distance function. In particular, we have the following.

\begin{thm}\label{thm_ric_pos_dist}
Let $(M,g)$ be a complete $n$-dimensional smooth Riemannian manifold with ${\rm Ric}\geq 0$ and let $\Omega$ be a bounded domain in $M$ with $C^1$ boundary and diameter $D$. If $D<\frac{r_{inj,\Omega}}{2}$, then
\begin{equation}\label{ineq_ric_pos_dist}
\mu_j\leq A_n\left(\frac{j}{|\Omega|}\right)^{\frac{4}{n}}+B_n\frac{|\partial\Omega|^4}{|\Omega|^4},
\end{equation}
for all $j\in\mathbb N$, where $A_n,B_n$ depend only on the dimension. If $n\leq 4$ we can choose $B_n=0$.
\end{thm}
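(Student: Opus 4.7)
The plan is to follow the scheme of the proof of Theorem \ref{main_general} with two crucial simplifications afforded by the hypotheses. First, since ${\rm Ric}\geq 0$, Bishop--Gromov volume comparison yields a covering/doubling constant depending only on $n$, so Theorem \ref{gny} applies directly to $X=\Omega$ with $\varsigma$ the restriction of the Lebesgue measure, for every $j\in\mathbb N$; in particular no split into a large-$j$ and a small-$j$ case is needed and the alternate decomposition iii)-b) of Theorem \ref{corollary_small_annuli} never enters the picture, which is exactly what removes the additive $C_n/a^4$ appearing in \eqref{main_estimate_general}. Second, the assumption $D<r_{inj,\Omega}/2$ ensures that for every $p\in\overline\Omega$ we have $\Omega\subset B(p,D)\subset B(p,r_{inj}(p)/2)$, so the distance function $\delta_p$ is smooth on $\Omega$ and Lemma \ref{lem_lap_small_ball} gives the pointwise estimate $|\Delta\delta_p|\leq (n-1)/\delta_p$ throughout $\Omega$ (using $\kappa=0$).

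I would then apply Theorem \ref{gny} to extract $4j$ annuli $\{A_i\}$ with $|A_i\cap\Omega|\geq c|\Omega|/(4j)$ and $2A_i$ pairwise disjoint, and select $j$ of them for which in addition $|\Omega\cap 2A_i|\leq |\Omega|/j$ and $|\partial\Omega\cap 2A_i|\leq |\partial\Omega|/j$. Subordinated to each selected $A_i$ with center $p_i$, I would construct the test function $u_i$ exactly as in \eqref{ball} or \eqref{prop_annulus}. The smoothness of $\delta_{p_i}$ on a neighbourhood of $\overline\Omega$ guarantees that $u_i\in C^{1,1}(M)$ with support in $\{\delta_{p_i}(x)\leq 2R_i\}\subset B(p_i, r_{inj}(p_i))$, so $u_i|_\Omega\in H^2(\Omega)$. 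The pointwise bounds \eqref{grad_ball}, \eqref{lap_ball}, \eqref{nablanabla2_0} (or \eqref{grad_ann}, \eqref{lap_ann}, \eqref{nablanabla2}) then hold with $\kappa=0$, and Corollary \ref{corollarygny0} combined with the volume comparison $|B(x,r)|\leq \omega_n r^n$ gives the lower bound $r_i\geq\frac{1}{2}(c|\Omega|/(\omega_n j))^{1/n}$.

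For general $n$, I would bound the Rayleigh quotient of $u_i$ by exactly the argument leading to \eqref{general_bound_laplacian_0} in the proof of Theorem \ref{main_general}: Bochner's formula \eqref{bochner_main} applied over $\Omega\cap 2A_i$ produces a bulk term, to be estimated by H\"older's inequality as in \eqref{ineq_ball_large_n} using $|\Omega\cap 2A_i|\leq |\Omega|/j$, and a boundary term controlled through $|\partial\Omega\cap 2A_i|\leq |\partial\Omega|/j$; Young's inequality then converts the mixed term $(j/|\Omega|)^{3/n}|\partial\Omega|/|\Omega|$ into the sum $(j/|\Omega|)^{4/n}+|\partial\Omega|^4/|\Omega|^4$, giving \eqref{ineq_ric_pos_dist}. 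For the improvement $B_n=0$ when $n\leq 4$ I would invoke the trick of Theorem \ref{low_dim_positive}: since ${\rm Ric}\geq 0$ the integrand $|D^2u_i|^2+{\rm Ric}(\nabla u_i,\nabla u_i)$ is pointwise non-negative, so the domain of integration can be enlarged from $\Omega\cap 2A_i$ to $2A_i$; because $u_i$ has compact support inside the ball where $\delta_{p_i}$ is smooth, Bochner's formula integrated over $2A_i$ produces no boundary contribution, and one is left with $\int_{2A_i}(\Delta u_i)^2 dv\leq (C/r_i^4)\omega_n(2R_i)^n\leq C r_i^{n-4}$, which together with $r_i\geq c(|\Omega|/j)^{1/n}$ and the denominator bound $\int u_i^2\geq c|\Omega|/j$ yields a clean $A_n(j/|\Omega|)^{4/n}$ estimate.

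The main obstacle is verifying that the distance-based test functions are genuinely in $H^2(\Omega)$ and that all derivative bounds on $\delta_{p_i}$ extend to the entire support: this is exactly where the diameter hypothesis is used, since it forces the support of $u_i|_\Omega$ into $B(p_i,D)\subset B(p_i,r_{inj}(p_i)/2)$, bypassing the cut-locus and allowing the one-sided Laplacian comparison of Lemma \ref{lem_lap_small_ball}. Once this is in place, the argument is a direct quantitative re-run of the relevant portions of Theorems \ref{main_general} and \ref{low_dim_positive} with $\kappa=0$.
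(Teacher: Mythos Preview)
Your proposal is correct and follows essentially the same route as the paper: apply Theorem \ref{gny} directly (the ${\rm Ric}\geq 0$ hypothesis gives a dimension-only packing constant, so no small-$j$/large-$j$ split and no $C_n/a^4$ term), use the diameter hypothesis to ensure the distance-based test functions \eqref{ball}--\eqref{prop_annulus} lie in $H^2(\Omega)$ with Lemma \ref{lem_lap_small_ball} available throughout $\Omega$, and then re-run the estimates leading to \eqref{general_bound_laplacian_0} with $\kappa=0$.

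The one place where you diverge from the paper is the $n\leq 4$ improvement. The paper simply invokes Theorem \ref{low_dim_positive}, which uses the Cheeger--Colding cutoffs of Lemma \ref{cheeger} and needs no diameter hypothesis at all. Your alternative---enlarging the integration from $\Omega\cap 2A_i$ to $2A_i$ and using Bochner's formula with no boundary term---is natural, but has a small gap as written: the two-sided bound $|\Delta\delta_{p_i}|\leq (n-1)/\delta_{p_i}$ from Lemma \ref{lem_lap_small_ball} is only guaranteed on $B(p_i,r_{inj}(p_i)/2)$, whereas $2A_i$ may extend up to radius $2R_i\leq 2D<r_{inj,\Omega}$, i.e.\ possibly beyond $r_{inj}(p_i)/2$. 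On that excess region you only have the one-sided comparison $\Delta\delta_{p_i}\leq (n-1)/\delta_{p_i}$, which is not enough to control $(\Delta u_i)^2$ pointwise. The cleanest fix is exactly what the paper does: cite Theorem \ref{low_dim_positive} for the $n\leq 4$ statement.
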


\begin{proof}
In order to prove Theorem \ref{thm_ric_pos_dist} we exploit Theorem \ref{gny} and Corollary \ref{corollarygny0}. We find, as in the proof of Theorem \ref{main_general}, for all $j\in\mathbb N$ a family of $j$ annuli $\left\{A_i\right\}_{i=1}^j$ such that $|\Omega\cap A_i|\geq c\frac{|\Omega|}{4j}$, $|\partial\Omega\cap2A_i|\leq \frac{|\partial\Omega|}{j}$, $|\Omega\cap2A_i|\leq \frac{|\Omega|}{j}$ and the annuli $2A_i$ are pairwise disjoint. Moreover, since we have taken $D<\frac{r_{inj,\Omega}}{2}$, the annuli $2A_i$ can be chosen such that the outer radius is strictly smaller that $r_{inj,\Omega}$. Associated with $A_i$ we then build test functions $u_i$ of the form \eqref{ball} if $A_i$ is a ball or \eqref{prop_annulus} if $A_i$ is a proper annulus. It follows then, as for the proof of \eqref{general_bound_laplacian_0}
$$
\frac{\int_{\Omega}|D^2u_i|^2+{\rm Ric}(\nabla u_i,\nabla u_i)dv}{\int_{\Omega}u_i^2dv} \leq A_n\left(\frac{j}{|\Omega|}\right)^{\frac{4}{n}}+B_n\frac{|\partial\Omega|^4}{|\Omega|^4},
$$
where the constants $A_n$, $B_n$ depend only on the dimension. This implies \eqref{ineq_ric_pos_dist} for all $j\in\mathbb N$. The last statement follows immediately from Theorem \ref{low_dim_positive}. This concludes the proof.
\end{proof}





\subsection{Domains on the sphere}\label{sub_sphere}
In this section we obtain bounds for domains of the standard sphere $\mathbb S_R^n$, namely we have the following theorem.

\begin{thm}\label{thm_sphere} 
Let $(M,g)=\mathbb S_R^n$ be the sphere of radius $R$ with standard round metric and let $\Omega$ be a  domain in $\mathbb S^n$ with $C^1$ boundary. Then
$$
\mu_j\leq A_n\left(\frac{j}{|\Omega|}\right)^{\frac{4}{n}},
$$
for all $j\in\mathbb N$.
\end{thm}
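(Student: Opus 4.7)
The plan is to follow the proof of Theorem \ref{main_general}, specialized to the sphere, where two simplifications allow us to drop the boundary and injectivity-radius terms appearing in that theorem: ${\rm Ric}\geq 0$ on $\mathbb{S}_R^n$, and $|\Omega|\leq|\mathbb{S}_R^n|=c_nR^n$. I would apply Theorem \ref{corollary_small_annuli} to $(\Omega,{\rm dist},dv|_\Omega)$ with $a=r_{inj,\Omega}/4=\pi R/4$; since ${\rm Ric}\geq 0$, Bishop--Gromov volume comparison gives $|B(p,r)|\leq\omega_nr^n$ globally, so the doubling constant $\Gamma$ in Theorem \ref{corollary_small_annuli} depends only on $n$.

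For $j\geq j_\Omega$, Theorem \ref{corollary_small_annuli} $iii)$-$a)$ supplies $4j$ annuli with pairwise disjoint $2A_i$ of outer radii strictly less than $\pi R/2$; I pick $j$ of them satisfying $|\Omega\cap 2A_i|\leq|\Omega|/j$, and associate to each annulus a test function $u_i\in H^2_0(2A_i)$ built from the distance function $\delta_{p_i}$ via the formulas \eqref{ball} or \eqref{prop_annulus}. Since $2A_i\subset B(p_i,\pi R/2)$ avoids the antipode of $p_i$, $\delta_{p_i}$ is smooth on $2A_i$ and Lemma \ref{lem_lap_small_ball} applies with $\kappa=0$. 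The crucial use of the sphere's geometry is
\[
\int_\Omega\bigl(|D^2u_i|^2+{\rm Ric}(\nabla u_i,\nabla u_i)\bigr)dv\leq\int_{2A_i}\bigl(|D^2u_i|^2+{\rm Ric}(\nabla u_i,\nabla u_i)\bigr)dv=\int_{2A_i}(\Delta u_i)^2dv,
\]
where the first inequality uses ${\rm Ric}\geq 0$ and the second is Bochner applied to $u_i\in H^2_0(2A_i)$. This eliminates the boundary contribution from $\partial\Omega$ that is responsible for the $|\partial\Omega|^4/|\Omega|^4$ term in Theorem \ref{main_general}. The bulk integral is then estimated exactly as in that theorem, using $|\Delta u_i|\leq C(n)/r_i^2$, volume comparison $|2A_i|\leq \omega_n(2R_i)^n$, and the lower bound $r_i\geq c(|\Omega|/j)^{1/n}$ from Corollary \ref{corollarygny0}: by direct computation for $n\leq 4$ and by H\"older's inequality combined with $|\Omega\cap 2A_i|\leq|\Omega|/j$ for $n>4$. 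Dividing by $\int_\Omega u_i^2dv\geq c|\Omega|/j$ and applying \eqref{minmax} gives $\mu_j\leq A_n(j/|\Omega|)^{4/n}$ for $j\geq j_\Omega$.

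For $j<j_\Omega$, the decomposition is by unions of disjoint small balls (Theorem \ref{corollary_small_annuli} $iii)$-$b)$); the same Bochner/Ricci-positivity argument yields $\mu_j\leq C(n)/R^4$. Since $|\Omega|\leq c_nR^n$ forces $(j/|\Omega|)^{4/n}\geq c_n^{-4/n}/R^4$, this bound is absorbed by $A_n(j/|\Omega|)^{4/n}$ after enlarging $A_n$ by a dimensional factor. The only genuine difficulty, inherited from Theorem \ref{main_general}, is the $n>4$ case, handled by the H\"older trick together with the careful selection of $j$ annuli from the $4j$ provided by Theorem \ref{corollary_small_annuli}; on the sphere no further obstruction arises beyond the bookkeeping described above.
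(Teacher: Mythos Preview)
Your argument is correct for $n\le 4$ and coincides with the strategy of Theorem~\ref{low_dim_positive}, which the paper also invokes in those dimensions; the absorption of the small-$j$ bound $C(n)/R^4$ via $|\Omega|\le c_nR^n$ is a nice touch. However, for $n>4$ there is a genuine gap in the order of operations. After using ${\rm Ric}\ge 0$ to enlarge the domain and then the \emph{integral} Bochner identity for $u_i\in H^2_0(2A_i)$, the numerator becomes $\int_{2A_i}(\Delta u_i)^2\,dv$, an integral over the \emph{full} double annulus. H\"older at this stage produces the factor $|2A_i|^{1-4/n}$, not $|\Omega\cap 2A_i|^{1-4/n}$, so the selection $|\Omega\cap 2A_i|\le|\Omega|/j$ is lost. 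Concretely, on the outer shell $R_i\le\delta_{p_i}\le 2R_i$ of a proper annulus one gets a contribution of order $R_i^{\,n-4}$, and since $R_i$ is only bounded by $\sim R$, this is not dominated by $(|\Omega|/j)^{1-4/n}$ for large $j$.

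The paper circumvents this by never passing to the integral Bochner identity: it computes $|D^2u_i|^2+{\rm Ric}(\nabla u_i,\nabla u_i)$ \emph{pointwise} for the radial test functions $u_i=f(\delta_{p_i})$ via formula~\eqref{bochner_dist}, using the explicit value $\nabla\delta_p\cdot\nabla\Delta\delta_p=-(n-1)/\bigl(R^2\sin^2(\delta_p/R)\bigr)$ on the sphere. Since this pointwise quantity is non-negative, H\"older is applied directly on $\Omega\cap 2A_i$ \emph{before} enlarging to $2A_i$ in the $L^{n/4}$ factor, and the crucial bound $|\Omega\cap 2A_i|^{1-4/n}\le(|\Omega|/j)^{1-4/n}$ survives. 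To repair your proof you need either this pointwise Bochner computation or a direct pointwise estimate of $|D^2\delta_{p_i}|$ on $\mathbb S_R^n$; the integral identity $\int_{2A_i}|D^2u|^2+{\rm Ric}=\int_{2A_i}(\Delta u)^2$ alone does not suffice when $n>4$.
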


\begin{proof}
Through all the proof we assume $n>4$. The validity of the theorem for $n\leq 4$ follows from Theorem \ref{low_dim_positive}. Given two points $p_1,p_2\in\mathbb S^n_R$, the maximal distance among them is attained when they are antipodal points. In this case $\delta_{p_1}(p_2)=\pi R$.  Moreover, $\mu_1=0$ and the corresponding eigenfunctions are the constant functions on $\Omega$, and $\mu_2>0$ (see Section \ref{few_pos}).


We apply Theorem \ref{gny} with $X=\Omega$ with the Riemannian distance, $\varsigma(A)=|A\cap\Omega|$. Since we have positive Ricci curvature, point $i)$ of Theorem \ref{gny} is satisfied for some $\Gamma$ which depends only on $n$. Points ii) and iii) are easily seen to hold. Hence we deduce that there exists $c$ which depends only on the dimension such that for any $j\geq 2$, there exists $A_1,...,A_j$ annuli with
\begin{enumerate}[i)]
\item $|A_i\cap\Omega|\geq c\frac{|\Omega|}{j}$;
\item $2A_i$ are pairwise disjoint;
\item $|2A_i\cap\Omega|\leq\frac{|\Omega|}{j}$;
\item each annulus $2A_i$ has outer radius less than $\frac{\pi R}{2}$.
\end{enumerate}
Points iii)  and iv) follow from the fact that we shall actually apply Theorem \ref{gny} with $2j+1$ and by observing that we can choose first $2j$ among the $2j+1$ annuli given by the construction so that iv) holds. Indeed, if an annulus $A_i$ with center $p_i$ has outer radius strictly greater than $\frac{\pi R}{2}$, the set $\mathbb S^n_R\setminus2\overline A_i$ is $B(p_1,r)\cup B(p_i',r')$, where $p_i'$ is the antipodal point of $p_i$, $r,r'<\frac{\pi R}{2}$ ($B(p',r')=\emptyset$ if  $A_i$ is a ball), and all other annuli $2A_k$ of the decomposition need belong to $B(p_1,r)\cup B(p_i',r')$. Hence, no more than one annulus can satisfy iv). Moreover, among the remaining $2j$ annuli, we can chose $j$ annuli such that  iii) holds (see also the proof of Theorem \ref{main_general}). We shall denote by $r_i$ and $R_i$ the inner and outer radius of $A_i$, if $A_i$ is an actual annulus, while we shall denote by $r_i$ the radius, if $A_i$ is a ball. By $p_i$ we denote the center of the annuli $A_i$.

Associated to each of the $j$ annuli $A_1,...,A_j$ satisfying i)-iv) we construct test functions $u_i$ as in \eqref{ball} (if $A_i$ is a ball) or in \eqref{prop_annulus} if $A_i$ is a proper annulus. The functions $u_i$ are of the form $u_i(x)=f\left(\delta_p(x)\right)$.

We apply  now Bochner's formula to a function of the form $f(\delta_p(x))$, and we use the fact that $|\nabla \delta_p(x)|=1$ almost everywhere. We have
\begin{multline}
\frac{1}{2}\Delta\left(|\nabla f(\delta_p(x))|^2\right)=\frac{1}{2}\Delta\left(|f'(\delta_p(x))\nabla\delta_p(x) |^2\right)\\
=\frac{1}{2}\Delta\left(f'(\delta_p(x))^2\right)\\
=\nabla(f'(\delta_p(x))f''(\delta_p(x)))\cdot\nabla\delta_p(x)+f'(\delta_p(x))f''(\delta_p(x))\Delta\delta_p(x)\\
=f''(\delta_p(x))^2+f'(\delta_p(x))f'''(\delta_p(x))+f'(\delta_p(x))f''(\delta_p(x))\Delta\delta_p(x).
\end{multline}
On the other hand
\begin{multline}
\nabla\Delta f(\delta_p(x))\cdot\nabla f(\delta_p(x))\\
=f'(\delta_p(x))\nabla\delta_p(x)\cdot\nabla(f''(\delta_p(x))+f'(\delta_p(x))\Delta\delta_p(x))\\
=f'(\delta_p(x))f'''(\delta_p(x))+f'(\delta_p(x))f''(\delta_p(x))\Delta\delta_p(x)\\
+(f'(\delta_p(x)))^2\nabla\delta_p(x)\cdot\nabla\Delta\delta_p(x).
\end{multline}
We deduce then
\begin{multline}\label{bochner_dist}
|D^2f(\delta_p(x))|^2+{\rm Ric}(\nabla f(\delta_p(x)),\nabla f(\delta_p(x)))\\=f''(\delta_p(x))^2-(f'(\delta_p(x)))^2\nabla\delta_p(x)\cdot\nabla\Delta\delta_p(x).
\end{multline}
Moreover,
\begin{multline}
\nabla\delta_p(x)\cdot\nabla\Delta\delta_p(x)=\nabla\left(\frac{(n-1)}{R}\cot\left(\frac{\delta_p(x)}{R}\right)\right)\cdot\nabla\delta_p(x)\\
=-\frac{n-1}{R^2\sin^2(\delta_p(x)/R)}|\nabla\delta_p(x)|^2=-\frac{n-1}{R^2\sin^2(\delta_p(x)/R)}
\end{multline}
for all $x\ne p,p'$, where $p'$ is the antipodal point to $p$, and
$$
\sin^2(\delta_p(x)/R)\leq\frac{4\delta_p(x)^2}{\pi^2R^2},
$$
for all $x$ such that $0\leq\delta_p(x)\leq\frac{\pi R}{2}$.

Now, since $|D^2f(\delta_p(x))|^2+{\rm Ric}(\nabla f(\delta_p(x)),\nabla f(\delta_p(x)))\geq 0$ and $n>4$,
\begin{multline}\label{chain_sphere}
\int_{\Omega\cap 2A_i} |D^2f(\delta_p(x))|^2+{\rm Ric}(\nabla f(\delta_p(x))dv\\
\leq |\Omega\cap 2A_i|^{1-\frac{4}{n}}\left(\int_{\Omega\cap 2A_i}\left(|D^2f(\delta_p(x))|^2+{\rm Ric}(\nabla f(\delta_p(x)),\nabla f(\delta_p(x)))\right)^{\frac{n}{4}}dv\right)^{\frac{4}{n}}\\
\leq |\Omega\cap 2A_i|^{1-\frac{4}{n}}\left(\int_{2A_i}\left(|D^2f(\delta_p(x))|^2+{\rm Ric}(\nabla f(\delta_p(x)),\nabla f(\delta_p(x)))\right)^{\frac{n}{4}}dv\right)^{\frac{4}{n}}\\
=  |\Omega\cap 2A_i|^{1-\frac{4}{n}}\left(\int_{2A_i}\left(f''(\delta_p(x))-(f'(\delta_p(x)))^2\nabla\delta_p(x)\cdot\nabla\Delta\delta_p(x)\right)^{\frac{n}{4}}dv\right)^{\frac{4}{n}}\\
=|\Omega\cap 2A_i|^{1-\frac{4}{n}}\left(\int_{2A_i}\left(f''(\delta_p(x))^2+(f'(\delta_p(x)))^2\frac{(n-1)}{R^2\sin^2(\delta_p(x)/R)}\right)^{\frac{n}{4}}dv\right)^{\frac{4}{n}}\\
\leq |\Omega\cap 2A_i|^{1-\frac{4}{n}}\left(\int_{2A_i}\left(f''(\delta_p(x))^2+\frac{(n-1)\pi^2(f'(\delta_p(x)))^2}{4\delta_p(x)^2}\right)^{\frac{n}{4}}dv\right)^{\frac{4}{n}}\\
\leq A_n'|\Omega\cap 2A_i|^{1-\frac{4}{n}}\leq A_n' \left(\frac{|\Omega|}{j}\right)^{1-\frac{4}{n}},
\end{multline}
where the constant $A_n'$ depends only on the dimension. For the denominator, we have
$$
\int_{\Omega\cap 2A_i}u_i^2dv\geq\int_{\Omega\cap A_i}u_i^2dv=|\Omega\cap A_i|\geq c\frac{|\Omega|}{j}.
$$
The proof follows now the same lines as that of Theorem \ref{main_general}.

\end{proof}

\begin{rem}
We remark that explicit constructions like the one just presented for domains on the sphere are difficult already in other cases of manifolds for which we know the exact structure of the cut-locus of a point. This is the case of domains on an infinitely long cylinder. In this case, obtaining good estimates with the technique used in the proof of Theorem \ref{thm_sphere} seems quite involved.
\end{rem}




\subsection{Domains of the hyperbolic space}\label{sub_hyp}

In this subsection we provide estimates for domains of the standard hyperbolic space $\mathbb H^n_{\kappa}$. We have the following theorem.
\begin{thm}\label{main_hyp}
Let $(M,g)=\mathbb H^n_{\kappa}$ be the standard $n$-dimensional hyperbolic space of curvature $-\kappa$, $\kappa>0$ and let $\Omega$ be a bounded domain in $\mathbb H^n_{\kappa}$ with $C^1$ boundary. Then 
\begin{equation}\label{est_hyp}
\mu_j\leq A_n\left(\frac{j}{|\Omega|}\right)^{\frac{4}{n}}+B_n\kappa^4,
\end{equation}
for all $j\in\mathbb N$.
\end{thm}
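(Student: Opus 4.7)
The strategy parallels Theorem \ref{thm_sphere}: we combine the decomposition result Theorem \ref{corollary_small_annuli} with test functions built from the distance function and exploit a pointwise Bochner-type identity valid for radial functions in a space form. Two facts about $\mathbb H^n_\kappa$ are crucial. First, the cut-locus of any point is empty and the injectivity radius is infinite, so in particular $r_{inj,\Omega}=+\infty$ for every bounded $\Omega$; hence distance functions are smooth away from the basepoint and the natural value of $a$ in Theorem \ref{main_general} is $a=1/\kappa$. Second, for balls of radius at most $2/\kappa$ the hyperbolic volume is comparable to the Euclidean one, so the packing constant $\Gamma$ (and therefore $c$) in Theorem \ref{corollary_small_annuli} depends only on $n$.

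First, apply Theorem \ref{corollary_small_annuli} with $X=\Omega$, the induced distance, $\varsigma=|\cdot\cap\Omega|$ and $a=1/\kappa$. For each $j$ one obtains $4j$ capacitors $A_i\subset D_i=2A_i$ (in the annular case) with outer radii at most $1/\kappa$, and $|A_i\cap\Omega|\geq c|\Omega|/(4j)$. As in the proof of Theorem \ref{main_general}, pick $j$ of them so that $|2A_i\cap\Omega|\leq|\Omega|/j$ (no control on $|\partial\Omega\cap 2A_i|$ is needed, as will become clear). Build a test function $u_i$ associated with $A_i$ exactly as in \eqref{ball} and \eqref{prop_annulus}; by smoothness of the distance function, $u_i\in H^2(\Omega)$. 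Because the $u_i$ are disjointly supported, \eqref{minmax2} reduces the problem to bounding the Rayleigh quotient of each $u_i$ by $A_n(j/|\Omega|)^{4/n}$.

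The key computation, carried out exactly as in the proof of Theorem \ref{thm_sphere} using $\Delta\delta_{p_i}=(n-1)\kappa\coth(\kappa\delta_{p_i})$, yields the pointwise identity
\begin{equation*}
|D^2u_i|^2+{\rm Ric}(\nabla u_i,\nabla u_i)=g''(\delta_{p_i})^2+(g'(\delta_{p_i}))^2\,\frac{(n-1)\kappa^2}{\sinh^2(\kappa\delta_{p_i})},
\end{equation*}
where $g$ is the one-variable profile of $u_i$. The right-hand side is \emph{non-negative}, so one may bound $\int_{\Omega\cap 2A_i}$ by $\int_{2A_i}$ directly, eliminating the boundary integrals that forced the term $|\partial\Omega|^4/|\Omega|^4$ in Theorem \ref{main_general}. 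Using $\sinh(\kappa\delta)\geq\kappa\delta$ and the uniform bounds $|f'|\leq 3$, $|f''|\leq 24$, one obtains the pointwise estimate $|D^2u_i|^2+{\rm Ric}(\nabla u_i,\nabla u_i)\leq C_n/r_i^4$ on the support of $u_i$ (where $r_i$ is the inner radius of $A_i$, or the ball radius). Combined with the hyperbolic volume comparison $|2A_i|\leq C'_n r_i^n$, this gives $\int_{2A_i}[|D^2u_i|^2+{\rm Ric}(\nabla u_i,\nabla u_i)]\,dv\leq C''_n r_i^{n-4}$ in low dimensions, while for $n>4$ one applies H\"older:
\begin{equation*}
\int_{\Omega\cap 2A_i}\leq|\Omega\cap 2A_i|^{1-\frac{4}{n}}\left(\int_{2A_i}(\,\cdot\,)^{n/4}dv\right)^{4/n}\leq C'''_n\left(\frac{|\Omega|}{j}\right)^{1-\frac{4}{n}}.
\end{equation*}
Using Corollary \ref{corollarygny0} and the lower bound $r_i\geq c'(|\Omega|/j)^{1/n}$ in the low-dimensional case, together with $\int_{\Omega}u_i^2\,dv\geq c|\Omega|/j$, yields the Rayleigh-quotient estimate $A_n(j/|\Omega|)^{4/n}$ for all $j$ large enough that the decomposition is annular.

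For the small-$j$ range, where Theorem \ref{corollary_small_annuli} only produces capacitors of type $iii)$-$b)$ built from balls of fixed radius $r_0=a/400=1/(400\kappa)$, we repeat the above argument with these balls; the same pointwise Bochner identity gives an integrand bounded by $C_n/r_0^4\leq C'_n\kappa^4$, and since the balls are selected to satisfy $|\Omega\cap D_i|\leq|\Omega|/j$, the Rayleigh quotient is bounded by $B_n\kappa^4$. Combining the two regimes (as in the conclusion of the proof of Theorem \ref{main_general}) gives \eqref{est_hyp}. The main conceptual obstacle, which is genuinely overcome in hyperbolic space but fails in the general-Ricci setting of Theorem \ref{main_general}, is precisely the non-negativity of $|D^2u|^2+{\rm Ric}(\nabla u,\nabla u)$ for radial functions, which removes the $|\partial\Omega|$-dependence; the technical work reduces to elementary calculus estimates for $\sinh$ and uniform bounds on the cutoff $f$.
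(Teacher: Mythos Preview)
Your proof is correct and follows essentially the same route as the paper's own argument: apply Theorem~\ref{corollary_small_annuli} with $a=1/\kappa$, use the pointwise Bochner identity \eqref{bochner_dist} together with $\kappa^2/\sinh^2(\kappa\delta)\le 1/\delta^2$ to obtain a non-negative integrand bounded by $C_n/r_i^4$, treat $n\le 4$ directly and $n>4$ via H\"older, and handle the small-$j$ regime with the type $iii)$-$b)$ capacitors at scale $r_0\sim 1/\kappa$. One minor imprecision: the claimed bound $|2A_i|\le C_n' r_i^n$ is false for proper annuli (it should be $R_i^n$), so to make the low-dimension and H\"older estimates go through you must split the integral over $2A_i$ into the inner shell $r_i/2\le\delta\le r_i$ and the outer shell $R_i\le\delta\le 2R_i$ and use the sharper bound $C_n/R_i^4$ on the latter---exactly as the paper does when it writes ``analogous computations'' for the proper-annulus case.
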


\begin{proof}
We apply Theorem \ref{corollary_small_annuli} with $a=\frac{1}{\kappa}$. 

\noindent {\bf Step 1.} There exists $j\in\mathbb N$ such that for all $j\geq j_{\Omega}$ we find a sequence of $j$ annuli $\left\{A_i\right\}_{i=1}^j$ such that $|\Omega\cap A_i|\geq c\frac{|\Omega|}{2j}$,  $|\Omega\cap 2_i|\leq\frac{|\Omega|}{j}$ and $2A_i$ pairwise disjoint. 

Associated to $A_i$ we construct test functions $u_i$ given by \eqref{ball} if $A_i$ is a ball, or by \eqref{prop_annulus} if $A_i$ is an a proper annulus. The distance function $\delta_p$ is smooth on all $\mathbb H^n_{\kappa}\setminus\left\{p\right\}$, for all $p\in \mathbb H^n_{\kappa}$, hence by construction ${u_i}_{|_{\Omega}}\in H^2(\Omega)$. We estimate now the Rayleigh quotient of the $u_i$. From \eqref{bochner_dist} and from the fact that
$$
\nabla\delta_p\cdot\nabla\Delta\delta_p=-\frac{(n-1)^2\kappa^2}{\sinh^2(\kappa\delta_p)}\leq-\frac{(n-1)^2}{\delta_p^2}
$$
we deduce, as in \eqref{chain_sphere}, that
\begin{multline}\label{chain_hyp}
\int_{\Omega\cap 2A_i} |D^2f(\delta_p(x))|^2+{\rm Ric}(\nabla f(\delta_p(x))dv\\
\leq |\Omega\cap 2A_i|^{1-\frac{4}{n}}\left(\int_{2A_i}\left(f''(\delta_p(x))^2+(f'(\delta_p(x)))^2\frac{(n-1)}{\delta_p(x)^2}\right)^{\frac{n}{4}}dv\right)^{\frac{4}{n}}\\
\leq A_n'|\Omega\cap 2A_i|^{1-\frac{4}{n}}\leq A_n' \left(\frac{|\Omega|}{j}\right)^{1-\frac{4}{n}},
\end{multline}
 if $n>4$.

If $n=2,3,4$ and $A_i$ is a ball of radius $r_i$
\begin{multline}\label{last_ineq}
\int_{\Omega\cap 2A_i} |D^2f(\delta_p(x))|^2+{\rm Ric}(\nabla f(\delta_p(x))dv\\
=\int_{\Omega\cap 2A_i}f''(\delta_p(x))^2+(f'(\delta_p(x)))^2\frac{(n-1)}{\delta_p(x)^2}dv\\
\leq A_n'\frac{|\Omega\cap 2A_i|}{r_i^4}\leq A_n'\frac{|2A_i|}{r_i^4}\leq A_n''r_i^{n-4}\\
\leq A_n'''\left(\frac{|\Omega|}{j}\right)^{1-\frac{4}{n}},
\end{multline}
where we have used \eqref{lower_ri}. Analogous computations show that inequality \eqref{last_ineq} holds also in the case that $A_i$ is a proper annulus, possibly with a different value of the constant $A_n'''$.

For the denominator of the Rayleigh quotient we have
$$
\int_{\Omega\cap 2A_i}u_i^2dv\geq\int_{\Omega\cap A_i}u_i^2dv=|\Omega\cap A_i|\geq c\frac{|\Omega|}{j}.
$$
Therefore
\begin{equation}\label{hyp_large_j}
\mu_j\leq A_n\left(\frac{j}{|\Omega|}\right)^{\frac{4}{n}},
\end{equation}
for all $j\geq j_{\Omega}$.

\noindent{\bf Step 2.} If $j<j_{\Omega}$, we proceed as in Step 2 of the proof of Theorem \ref{main_general}. By applying Theorem \ref{corollary_small_annuli} we find that there exists a family $\left\{A_i\right\}_{i=1}^{2j}$ of sets with the property $|\Omega\cap A_i|\geq c\frac{|\Omega|}{2j}$. If the $A_i$ are annuli, we proceed as in Step 1 and deduce the validity of \eqref{hyp_large_j}. Assume now that Theorem \ref{corollary_small_annuli} provides $2j$ sets such that
$$
A_i=B(x_1^i,r_0)\cup\cdots\cup B(x_{l_i}^i,r_0),
$$
with $r_0=\frac{4a}{1600}$, $D_i=A_i^{4r_0}$ are pairwise disjoint, and $\delta_{x_l^i}(x_k^i)\geq 4r_0$ if $l\ne k$. Since we have $2j$ disjoint sets, we can pick $j$ of them such that $|\Omega\cap D_i|\leq\frac{|\Omega|}{j}$. Note that each $D_i$ is a disjoint union of balls $B(x_1^i,r_0),...,B(x_{l_i}^i,r_0)$.  Associated with each $B(x_k^i,5r_0)$ we define a test function $u_k^i$ as in \eqref{ball}. Then, for any $i=1,...,j$ we define the function $u_i$ by setting $u_i=u_k^i$ on $B(x_k^i,r_0)$. Now, analogous computations as those in Step 1 allow to conclude that
\begin{equation}\label{hyp_small_j}
\mu_j\leq \frac{B_n}{a^4}.
\end{equation}
Since $a=\frac{1}{\kappa}$, from \eqref{hyp_large_j} and \eqref{hyp_small_j} we deduce the validity of \eqref{est_hyp}. This concludes the proof.

\end{proof}





\subsection{Domains of Cartan-Hadamard manifolds}\label{sub_cart}
A Cartan-Hadamard manifold is a complete, simply-connected Riemannian manifold $(M,g)$  with non-positive sectional curvature. As a corollary of Theorem \ref{main_general} we have the following.
\begin{thm}\label{main_cartan}
Let $(M,g)$ be a $n$-dimensional Cartan-Hadamard manifold with ${\rm Ric}\geq-(n-1)\kappa^2$, $\kappa>0$ and let $\Omega$ be a bounded domain in $M$ of class $C^1$. Then
$$
\mu_j\leq A_n\left(\frac{j}{|\Omega|}\right)^{\frac{4}{n}}+B_n\frac{|\partial\Omega|^4}{|\Omega|^4}+C_n\kappa^4,
$$
for all $j\in\mathbb N$
\end{thm}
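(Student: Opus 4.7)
The plan is to derive Theorem \ref{main_cartan} directly from Theorem \ref{main_general} by observing that the injectivity-radius term in the general estimate becomes trivial on a Cartan-Hadamard manifold. Recall that Theorem \ref{main_general} gives, for any bounded $C^1$ domain $\Omega$ in a complete manifold with ${\rm Ric} \geq -(n-1)\kappa^2$,
$$\mu_j \leq A_n\Big(\tfrac{j}{|\Omega|}\Big)^{\!4/n} + B_n\tfrac{|\partial\Omega|^4}{|\Omega|^4} + \tfrac{C_n}{a^4},\qquad a = \min\Big\{\tfrac{1}{\kappa},\tfrac{r_{inj,\Omega}}{2}\Big\}.$$

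First I would invoke the Cartan--Hadamard theorem: on a complete, simply connected manifold of non-positive sectional curvature the exponential map $\exp_p: T_pM \to M$ is a diffeomorphism for every $p\in M$, so there are no conjugate points and no non-trivial loops based at $p$. In particular, $r_{inj}(p) = +\infty$ for every $p\in M$, and consequently the relative injectivity radius
$$r_{inj,\Omega} = \inf_{p\in\overline\Omega} r_{inj}(p) = +\infty$$
for any bounded domain $\Omega$. Therefore the minimum defining $a$ is attained by the first entry, giving $a = 1/\kappa$ and $1/a^4 = \kappa^4$.

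Substituting this value of $a$ into the conclusion of Theorem \ref{main_general} yields exactly
$$\mu_j \leq A_n\Big(\tfrac{j}{|\Omega|}\Big)^{\!4/n} + B_n\tfrac{|\partial\Omega|^4}{|\Omega|^4} + C_n\kappa^4,$$
with the same dimensional constants $A_n,B_n,C_n$ produced by Theorem \ref{main_general}. There is no real obstacle here; the only point that must be checked carefully is that Theorem \ref{main_general} is applied with the correct sign convention on the Ricci lower bound (which matches our hypothesis ${\rm Ric}\geq -(n-1)\kappa^2$) and that the hypothesis $\kappa > 0$ ensures $1/\kappa$ is finite so that $a$ is well defined and positive. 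Since on a Cartan-Hadamard manifold the Ricci curvature is automatically non-positive, such a $\kappa>0$ always exists (and can indeed be taken to be zero if the Ricci curvature itself is non-negative, but in that case one falls back on the results of Subsection \ref{sub_small}).
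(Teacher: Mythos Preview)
Your argument is correct and matches the paper's own proof: both reduce Theorem~\ref{main_cartan} to Theorem~\ref{main_general} by observing that one may take $a=1/\kappa$. The paper phrases the justification slightly differently---it notes that $\delta_p$ is smooth on $M\setminus\{p\}$ and that $\Delta\delta_p\geq 0$ on a Cartan--Hadamard manifold---which is exactly what underlies your statement that $r_{inj,\Omega}=+\infty$ and makes the Laplacian bound from Lemma~\ref{lem_lap_small_ball} available globally; your black-box application of Theorem~\ref{main_general} is therefore legitimate.
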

\begin{proof}
We note that we can choose $a=\frac{1}{\kappa}$ in Theorem \ref{main_general}, since for any $p\in  M$, $\delta_p$ is smooth on the whole $M\setminus\left\{p\right\}$. Moreover, $\Delta\delta_p\geq 0$ for any $p\in M$, see e.g., \cite{grillo_vasquez}.
\end{proof}

\begin{rem}
A way of getting rid of the term $\frac{|\partial\Omega|^4}{|\Omega|^4}$ is to have explicit expressions for the right-hand side of \eqref{bochner_dist}. This is the case of domains of standard spheres or for the hyperbolic space. However, we note that $\nabla\delta_p\cdot\nabla\Delta\delta_p$ is exactly $\frac{\partial \mathcal H(r)}{\partial r}$, where $\mathcal H(r)$ is the mean curvature of the sphere centered at $p$ in $M$ and $r$ is the radial direction. From Bochner's formula we can just recover
$$
\nabla\delta_p\cdot\nabla\Delta\delta_p=-|D^2\delta_p|^2-{\rm Ric}(\nabla\delta_p,\nabla\delta_p)\leq-\frac{(\Delta\delta_p)^2}{n}+(n-1)\kappa^2.
$$
However, a lower bound for such quantity is needed. Otherwise, we necessarily have to pass through an integration by parts as in Theorem \ref{main_general}, and this involves boundary terms.
\end{rem}




\subsection{Manifolds without boundary}\label{boundaryless}
In this subsection $\Omega=M$, with $(M,g)$ a compact complete $n$-dimensional smooth Riemannian manifold (without boundary) and ${\rm Ric}\geq-(n-1)\kappa^2$, $\kappa\geq 0$. 

A double integration by parts and Bochner's formula imply that
$$
\int_M |D^2u|^2+{\rm Ric}(\nabla u,\nabla u)dv=\int_{M}(\Delta u)^2dv
$$
for all $u\in H^2(M)$. In particular we have that
$$
0=\mu_1<\mu_2\leq\cdots\leq\mu_j\leq\cdots\nearrow +\infty.
$$
In fact, one easily checks that all the eigenvalues are non-negative, and that there is only one zero eigenvalue with associated eigenfunctions the constant functions on $M$. 

We prove now that the eigenvalues $\mu_j$ of \eqref{variational_0} are exactly the squares of the eigenvalues of the Laplacian on $M$. Recall that the weak formulation of the closed eigenvalue problem for the Laplacian is
\begin{equation}\label{weak_N}
\int_M\langle\nabla u,\nabla\phi\rangle dv=m\int_{M}u\phi dv\,,\ \ \ \forall\phi\in H^1(M),
\end{equation}
in the unknowns $(u,m)\in H^1(M)\times\mathbb R$. Problem \eqref{weak_N} admits an increasing sequence of non-negative eigenvalues of finite multiplicity
$$
0=m_1<m_2\leq \cdots\leq m_j\leq\cdots\nearrow+\infty
$$
and the corresponding eigenfunctions can be chosen to form a orthonormal basis of $L^2(M)$. We have the following theorem.

\begin{thm}\label{close_comp}
Let $(M,g)$  be a compact complete $n$-dimensional smooth Riemannian manifold. Let $\left\{m_j\right\}_{j=1}^{\infty}$ denote the eigenvalues of the Laplacian on $M$. Then for all $j\in\mathbb N$
$$
\mu_j=m_j^2.
$$
and the corresponding eigenfunctions can be chosen to be the same.
\end{thm}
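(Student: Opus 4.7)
The plan is to reduce the biharmonic Neumann problem on $M$ to the squared Laplacian and then quote the spectral theorem for $\Delta$. The cornerstone is the identity, already recorded in the excerpt, that for every $u\in H^2(M)$,
\begin{equation*}
\int_M |D^2u|^2+\operatorname{Ric}(\nabla u,\nabla u)\,dv=\int_M(\Delta u)^2\,dv.
\end{equation*}
This is Reilly's formula on the boundaryless manifold $M$ (the $\partial\Omega$-integral in \eqref{reilly} simply disappears), and by polarization it upgrades to
\begin{equation*}
\int_M \langle D^2u,D^2\phi\rangle+\operatorname{Ric}(\nabla u,\nabla\phi)\,dv=\int_M \Delta u\,\Delta\phi\,dv
\end{equation*}
for all $u,\phi\in H^2(M)$. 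Hence the weak problem \eqref{variational_0} on $M$ is equivalent to the weak eigenvalue equation $\int_M \Delta u\,\Delta\phi\,dv=\mu\int_M u\phi\,dv$ for all $\phi\in H^2(M)$.

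First I would exhibit the squared Laplacian eigenpairs as solutions. Let $\{(v_j,m_j)\}_{j=1}^\infty$ be the Laplace eigenpairs of \eqref{weak_N}, with $-\Delta v_j=m_j v_j$ and $\{v_j\}$ an $L^2$-orthonormal basis consisting of smooth functions (by elliptic regularity, so in particular in $H^2(M)$). For any $\phi\in H^2(M)\subset H^1(M)$, a double integration by parts (legal because $\partial M=\emptyset$ and $v_j$ is smooth) gives
\begin{equation*}
\int_M \Delta v_j\,\Delta\phi\,dv=-m_j\int_M v_j\,\Delta\phi\,dv=m_j^2\int_M v_j\,\phi\,dv.
\end{equation*}
Combined with the Reilly identity above, this shows that $(v_j,m_j^2)$ solves the variational problem \eqref{variational_0} on $M$. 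Thus every $m_j^2$ is a biharmonic Neumann eigenvalue with eigenfunction $v_j$.

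Next I would show these exhaust the biharmonic spectrum. Since $\{v_j\}_{j=1}^\infty$ is a complete orthonormal system in $L^2(M)$, any biharmonic eigenfunction $u\in H^2(M)\subset L^2(M)$ admits an expansion $u=\sum_j c_j v_j$ converging in $L^2(M)$. Testing \eqref{variational_0} (in its reduced form $\int_M \Delta u\,\Delta\phi\,dv=\mu\int_M u\phi\,dv$) against $\phi=v_k$ and using the previous computation on each side yields $m_k^2 c_k=\mu c_k$ for every $k$. Hence $\mu\in\{m_k^2\}_{k\geq 1}$ and $u$ lies in the span of those $v_k$ with $m_k^2=\mu$. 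Together with Step 1 and the fact that both spectra are arranged in non-decreasing order with multiplicities, this forces $\mu_j=m_j^2$ for every $j\in\mathbb N$, and the eigenspaces coincide.

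The only delicate points are really bookkeeping: verifying that the Reilly identity indeed passes to all of $H^2(M)$ by density of $C^\infty(M)$ (which is exactly the closure definition of $H^2(M)$ used here, together with the boundaryless version $H^2(M)=H^2_0(M)$ noted in Remark \ref{rem33}), and matching multiplicities in the final step. The core of the argument is purely algebraic once the Reilly identity kills the cross terms; no geometric inequalities, min-max comparison, or test-function construction is needed.
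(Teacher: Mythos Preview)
Your argument is correct, but it proceeds differently from the paper. The paper proves the two inequalities $\mu_j\leq m_j^2$ and $m_j^2\leq\mu_j$ separately via the min-max characterizations: for the first it plugs the span of the first $j$ Laplace eigenfunctions into \eqref{minmax}, and for the second it plugs the span of the first $j$ biharmonic eigenfunctions into the min-max for $m_j$, combined with the Cauchy--Schwarz estimate $\int_M|\nabla u|^2\,dv\leq\big(\int_M u^2\big)^{1/2}\big(\int_M(\Delta u)^2\big)^{1/2}$. Your route is more structural: after reducing the bilinear form to $\int_M\Delta u\,\Delta\phi\,dv$ via Reilly, you verify directly that each Laplace eigenpair $(v_k,m_k^2)$ solves the biharmonic problem, and conversely expand any biharmonic eigenfunction in the $\{v_k\}$ basis to force $\mu\in\{m_k^2\}$ with matching eigenspaces. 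Your approach is cleaner in that it identifies the two spectral problems outright rather than sandwiching eigenvalues, and it makes the equality of eigenspaces immediate; the paper's variational comparison, on the other hand, avoids invoking completeness of the Laplace eigenbasis and works entirely at the level of Rayleigh quotients. Both arguments ultimately rest on the boundaryless Reilly identity and the non-negativity of the $m_j$ (so that $m_j\mapsto m_j^2$ is order-preserving), which you handle correctly.
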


\begin{proof}
Let $v_i$ denote the  eigenfunctions associated with $m_i$ normalized by\\ $\int_{\Omega}v_iv_kdv=\delta_{ik}$. Since the metric is smooth, we have that $v_i\in H^2(M)$ and $-\Delta v_i=m_iv_i$ in $L^2(M)$. Hence, by setting $V:=<v_1,...,v_j>$, we have that $V$ is a $j$-dimensional subspace of $H^2(M)$ and a function $v\in V$ is of the form $v=\sum_{i=1}^j\alpha_i v_i$ for some $\alpha_1,....\alpha_j\in\mathbb R$. Hence, from \eqref{minmax} we have
$$
\mu_j\leq\max_{v\in V}\frac{\int_{M}(\Delta v)^2dv}{\int_Mv^2dv}=\max_{(\alpha_1,...,\alpha_j)\in\mathbb R^j}\frac{\sum_{i=1}^j\alpha_i^2m_i^2}{\sum_{i=1}^j\alpha_i^2}=m_j^2.
$$
On the other hand, the well-known min-max principle for the eigenvalues of the Laplacian on $M$ states that
$$
m_j=\min_{\substack{U\subset H^1(M)\\{\rm dim}U=j}}\max_{\substack{u\in U\\u\ne 0}}\frac{\int_M|\nabla u|^2dv}{\int_M u^2dv}.
$$
We choose $U:=<u_1,...,u_j>$ where $u_1,...,u_j$ are the eigenfunctions associated with the eigenvalues $\mu_1,...,\mu_j$ of the biharmonic operator on $M$ normalized by $\int_{M}u_iu_k dv=\delta_{ik}$. Then $\int_M\Delta u_i\Delta u_k dv=\mu_i\delta_{ik}$. Any $u\in U$ is of the form $u=\sum_{i=1}^j\alpha_iu_i$ for some $\alpha_1,...,\alpha_j\in\mathbb R$. We note that
\begin{multline}
\int_{M}|\nabla u|^2dv=-\int_M u\Delta u dv\leq \left(\int_M u^2 dv\right)^{\frac{1}{2}}\left(\int_M(\Delta u)^2dv\right)^{\frac{1}{2}}\\
=\left(\sum_{i=1}^j\alpha_i^2\right)^{\frac{1}{2}}\left(\sum_{i=1}^j\alpha_i^2\mu_i\right)^{1/2},
\end{multline}
hence
$$
m_j\leq\max_{(\alpha_1,...,\alpha_j)\in\mathbb R^j}\left(\frac{\sum_{i=1}^j\alpha_i^2\mu_i}{\sum_{i=1}^j\alpha_i^2}\right)^{\frac{1}{2}}=\mu_j^{\frac{1}{2}}.
$$
The rest of the proof is straightforward.
\end{proof}

From Theorem \ref{close_comp} and from \eqref{buser} we deduce the following corollary.

\begin{cor}\label{cor_boundaryless}
Let $(M,g)$  be a compact complete $n$-dimensional smooth Riemannian manifold (without boundary) with ${\rm Ric}\geq-(n-1)\kappa^2$, $\kappa\geq 0$.  Then for all $j\in\mathbb N$
$$
\mu_j\leq\left(\frac{(n-1)^2}{4}\kappa^2+C_n\left(\frac{j}{|\Omega|}\right)^{\frac{2}{n}}\right)^2.
$$
\end{cor}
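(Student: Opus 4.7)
The plan is to combine the two earlier results Theorem \ref{close_comp} and Buser's inequality \eqref{buser} in essentially one line. Since $M$ is a compact boundaryless manifold, Theorem \ref{close_comp} tells us that the Neumann biharmonic eigenvalues satisfy $\mu_j=m_j^2$ for every $j\in\mathbb N$, where $m_j$ are the eigenvalues of the Laplace-Beltrami operator on $M$ (with the same eigenfunctions). Therefore the problem of bounding $\mu_j$ is reduced to bounding $m_j^2$, and there is no genuine work to do beyond invoking a known Laplacian estimate and squaring it.

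First I would recall that $m_j\geq 0$ under our hypothesis (indeed $m_1=0$), so that squaring preserves the inequality. Then I would apply Buser's bound \eqref{buser}, which under the Ricci lower bound ${\rm Ric}\geq-(n-1)\kappa^2$ gives
\[
m_j\leq\frac{(n-1)^2}{4}\kappa^2+C_n\left(\frac{j}{|M|}\right)^{\frac{2}{n}}
\]
for some dimensional constant $C_n$. Squaring both sides and using Theorem \ref{close_comp} yields
\[
\mu_j=m_j^2\leq\left(\frac{(n-1)^2}{4}\kappa^2+C_n\left(\frac{j}{|M|}\right)^{\frac{2}{n}}\right)^2,
\]
which is exactly the claimed inequality (with the convention $\Omega=M$ adopted in this subsection).

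There is no real obstacle here: the corollary is a direct consequence of the two cited results, and the only mild subtlety is making sure both sides of Buser's bound are non-negative before squaring, which is immediate since $\kappa\geq 0$ and $m_j\geq 0$. If one wanted a self-contained presentation, the only piece of extra care would be to remark that the constant $C_n$ appearing in the final estimate is exactly the one from Buser's inequality, so no dimensional constants are lost in the passage from $m_j$ to $\mu_j$.
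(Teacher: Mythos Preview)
Your proposal is correct and matches the paper's own argument exactly: the corollary is stated as an immediate consequence of Theorem \ref{close_comp} ($\mu_j=m_j^2$) and Buser's inequality \eqref{buser}, with no additional steps beyond squaring.
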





\subsection{Domains with convex boundary}\label{sub_convex}
In this last subsection we shall present a case in which upper bounds for biharmonic Neumann eigenvalues $\mu_j$ can be deduced directly by comparison with Neumann eigenvalues of the Laplacian and by \eqref{colbois_maerten}.

Let $(M,g)$ be a complete $n$-dimensional smooth Riemannian manifold with ${\rm Ric}\geq-(n-1)\kappa^2$, $\kappa\geq 0$, and let $\Omega$ be a bounded domain in $M$ with $C^2$ boundary. If $II\geq 0$ then we can compare the eigenvalues of \eqref{variational_0} with the squares of the eigenvalues of the Neumann Laplacian on $\Omega$. We recall that the weak formulation of the Neumann problem for the Laplace operator on $\Omega$ is given by \eqref{weak_N} with $M$ replaced by $\Omega$. Neumann eigenvalues of the Laplacian have finite multiplicity, are non-negative and form an increasing sequence
$$
0=m_1<m_2\leq \cdots\leq m_j\leq\cdots\nearrow+\infty.
$$
The associated eigenfunctions are denoted by $\left\{v_i\right\}_{i=1}^{\infty}$ can be chosen to form a orthonormal basis of $L^2(\Omega)$. We have the following

\begin{thm}\label{comp_conv}
Let $(M,g)$ be a complete $n$-dimensional smooth Riemannian manifold and let $\Omega$ will be a bounded domain of $M$ of class $C^2$ with $II\geq 0$. Then
$$
\mu_j\leq m_j^2,
$$
 for all $j\in\mathbb N$
\end{thm}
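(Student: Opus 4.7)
The plan is to test the min-max formula \eqref{minmax} for $\mu_j$ against the span of the first $j$ Neumann Laplace eigenfunctions, using Reilly's formula together with the Neumann boundary condition $\frac{\partial v_i}{\partial \nu}=0$ and the hypothesis $II\geq 0$ to control $\int_\Omega |D^2 v|^2+{\rm Ric}(\nabla v,\nabla v)\,dv$ by $\int_\Omega (\Delta v)^2\,dv$.

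First, I would note that, thanks to the $C^2$ regularity of $\partial\Omega$, elliptic regularity ensures each Laplace eigenfunction $v_i\in H^2(\Omega)$, so the space $V_j:=\langle v_1,\dots,v_j\rangle$ is a $j$-dimensional subspace of $H^2(\Omega)$ admissible in \eqref{minmax}. For any $v=\sum_{i=1}^j\alpha_iv_i\in V_j$, since $-\Delta v_i=m_iv_i$ in $\Omega$ and $\frac{\partial v_i}{\partial\nu}=0$ on $\partial\Omega$, one gets $\frac{\partial v}{\partial\nu}=0$ on $\partial\Omega$, and (using $L^2$-orthonormality of the $v_i$)
\begin{equation*}
\int_\Omega v^2\,dv=\sum_{i=1}^j\alpha_i^2,\qquad \int_\Omega(\Delta v)^2\,dv=\sum_{i=1}^j\alpha_i^2 m_i^2\leq m_j^2\int_\Omega v^2\,dv.
\end{equation*}

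Second, I would apply Reilly's formula \eqref{reilly} to $v$. Because $\frac{\partial v}{\partial\nu}=0$ on $\partial\Omega$, the first two boundary terms in \eqref{reilly} vanish, leaving
\begin{equation*}
\int_\Omega(\Delta v)^2\,dv-\int_\Omega\bigl(|D^2v|^2+{\rm Ric}(\nabla v,\nabla v)\bigr)\,dv=\int_{\partial\Omega}II(\nabla_{\partial\Omega}v,\nabla_{\partial\Omega}v)\,d\sigma\geq 0,
\end{equation*}
where the last inequality uses the convexity assumption $II\geq 0$. Rearranging,
\begin{equation*}
\int_\Omega|D^2v|^2+{\rm Ric}(\nabla v,\nabla v)\,dv\leq \int_\Omega(\Delta v)^2\,dv\leq m_j^2\int_\Omega v^2\,dv.
\end{equation*}

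Finally, inserting $V_j$ into the min-max formula \eqref{minmax},
\begin{equation*}
\mu_j\leq\max_{0\ne v\in V_j}\frac{\int_\Omega|D^2v|^2+{\rm Ric}(\nabla v,\nabla v)\,dv}{\int_\Omega v^2\,dv}\leq m_j^2,
\end{equation*}
which is the desired inequality. No genuine obstacle arises here; the only subtle point is ensuring the $v_i$ belong to $H^2(\Omega)$ (which follows from $C^2$-regularity of $\partial\Omega$) so that they are legitimate competitors in the quadratic form associated with $\mu_j$, and verifying that the vanishing Neumann trace $\frac{\partial v}{\partial\nu}=0$ kills precisely the two indefinite boundary contributions in \eqref{reilly}, so that $II\geq 0$ produces a term of the favorable sign.
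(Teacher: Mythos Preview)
Your proof is correct and is essentially the same as the paper's: both test the min-max with the span of the first $j$ Neumann Laplace eigenfunctions, use $C^2$ regularity to place them in $H^2(\Omega)$, and then exploit Reilly's identity (the paper invokes its polarized form from \eqref{step-1}--\eqref{step-2}, you cite \eqref{reilly} directly) together with $\frac{\partial v}{\partial\nu}=0$ and $II\geq 0$ to bound the Rayleigh quotient by $m_j^2$.
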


\begin{proof}
We have seen that for all $u,\phi\in H^2(\Omega)$ (see \eqref{step-1} and \eqref{step-2})
\begin{multline}
\int_{\Omega}\langle D^2 u, D^2\phi\rangle+{\rm Ric}(\nabla u,\nabla\phi)dv=\int_{\Omega}\Delta u\Delta\phi dv\\
-\int_{\partial\Omega}\left((n-1)\mathcal H\frac{\partial u}{\partial\nu}+\Delta_{\partial\Omega}u\right)\frac{\partial\phi}{\partial\nu}d\sigma\\
-\int_{\partial\Omega}\left(II(\nabla_{\partial\Omega}u,\nabla_{\partial\Omega}\phi)+\frac{\partial u}{\partial\nu}\Delta_{\partial\Omega}\phi\right)d\sigma.
\end{multline}
Since the domain is of class $C^2$, by standard elliptic regularity we have that the eigenfunctions $v_i$ of the Neumann Laplacian belong to $H^2(\Omega)$. Therefore $-\Delta v_i=m_iv_i$ in $L^2(\Omega)$ and $\frac{\partial v_i}{\partial\nu}=0$ in $L^2(\partial\Omega)$. We deduce that for any linear combination $v=\sum_{i=1}^j \alpha_i v_i$ with $\alpha_i\in\mathbb R$
\begin{multline}
\int_{\Omega}|D^2v|^2+{\rm Ric}(\nabla v,\nabla v)dv=\int_{\Omega}(\Delta v)^2dv-\int_{\partial\Omega}II(\nabla_{\partial\Omega}v,\nabla_{\partial\Omega}v)d\sigma\\
\leq \int_{\Omega}(\Delta v)^2dv=\int_{\Omega}\left(\sum_{i=1}^j\alpha_im_iv_i\right)^2dv=\sum_{i=1}^j\alpha_i^2m_i^2.
\end{multline}
Consider then $V:=<v_1,...,v_j>$ the $j$-dimensional space spanned by the first $j$ eigenfunctions of the Neumann Laplacian. This is a subspace of $H^2(\Omega)$ of dimension $j$. Each $v\in V$ is of the form $v=\sum_{i=1}^j\alpha_i v_i$ for some $\alpha_1,...,\alpha_j\in\mathbb R$. Moreover 
$$
\int_{\Omega}v^2dv=\sum_{i=1}^j\alpha_i^2.
$$
From \eqref{minmax} we have that
$$
\mu_j\leq\max_{v\in V}\frac{\int_{\Omega}|D^2v|^2+{\rm Ric}(\nabla v,\nabla v)dv}{\int_{\Omega}v^2dv}\leq\max_{(\alpha_1,...,\alpha_j)\in\mathbb R^j}\frac{\sum_{i=1}^j\alpha_i^2m_i^2}{\sum_{i=1}^j\alpha_i^2}=m_j^2.
$$
This concludes the proof.
\end{proof}

Theorem \ref{comp_conv} and inequality \eqref{colbois_maerten} imply the following corollary.
\begin{cor}\label{cor_convex}
Let $(M,g)$ be a complete $n$-dimensional smooth Riemannian manifold with ${\rm Ric}\geq-(n-1)\kappa^2$, $\kappa\geq 0$, and let $\Omega$ be a bounded domain of $M$ of class $C^2$ with $II\geq 0$. Then 
\begin{equation}\label{conjecture}
\mu_j\leq\left(A_n\kappa^2+B_n\left(\frac{j}{|\Omega|}\right)^{\frac{2}{n}}\right)^2,
\end{equation}
for all $j\in\mathbb N$.
\end{cor}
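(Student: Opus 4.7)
The plan is essentially immediate given the two results invoked in the statement. First I would apply Theorem \ref{comp_conv}, which was proved just above: since $\Omega$ is a $C^2$ bounded domain in $(M,g)$ whose boundary satisfies $II\geq 0$, it yields the comparison $\mu_j\leq m_j^2$ for every $j\in\mathbb N$, where the $m_j$ are the Neumann eigenvalues of the Laplace-Beltrami operator on $\Omega$. The key analytic ingredient behind that bound is Reilly's identity, which contributes a non-positive boundary term $-\int_{\partial\Omega}II(\nabla_{\partial\Omega}v,\nabla_{\partial\Omega}v)\,d\sigma$ once $II\geq 0$ is assumed, and this boundary term is precisely what allows discarding the $II$-contribution in the Rayleigh quotient comparison.

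Next I would invoke the estimate \eqref{colbois_maerten} of Colbois-Maerten, which applies directly because $(M,g)$ satisfies ${\rm Ric}\geq-(n-1)\kappa^2$ and $\Omega$ is a bounded domain of $M$: it furnishes
\begin{equation*}
m_j\leq A_n\left(\frac{j}{|\Omega|}\right)^{\frac{2}{n}}+B_n\kappa^2
\end{equation*}
for all $j\in\mathbb N$, with dimensional constants $A_n,B_n$. Since both sides are non-negative, squaring this inequality and composing it with $\mu_j\leq m_j^2$ from the previous step produces
\begin{equation*}
\mu_j\leq m_j^2\leq\left(A_n\left(\frac{j}{|\Omega|}\right)^{\frac{2}{n}}+B_n\kappa^2\right)^2,
\end{equation*}
which is the desired bound (after possibly relabeling the two dimensional constants to match the form stated in \eqref{conjecture}).

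There is no genuine obstacle here: the whole work has already been done in proving Theorem \ref{comp_conv} and in the cited estimate \eqref{colbois_maerten}. The only minor point worth flagging is the hypothesis compatibility: \eqref{colbois_maerten} requires only a lower Ricci bound, while Theorem \ref{comp_conv} requires $C^2$ regularity with $II\geq 0$, and both hypotheses are explicitly assumed in the corollary, so the chaining is legitimate. Thus the proof reduces to two lines combining these two ingredients.
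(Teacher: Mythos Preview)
Your proposal is correct and matches the paper's approach exactly: the paper simply states that Corollary \ref{cor_convex} follows from Theorem \ref{comp_conv} combined with inequality \eqref{colbois_maerten}, which is precisely the chaining $\mu_j\leq m_j^2\leq\left(A_n\kappa^2+B_n\left(\frac{j}{|\Omega|}\right)^{2/n}\right)^2$ you describe.
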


We remark that this bound holds independently of the size of $\Omega$,  its diameter and the injectivity radius of $M$. Hence it is natural to pose the following question, whose answer seems quite complicated at this stage.\\
\vskip .2pt
{\bf Open problem.} Prove inequality \eqref{conjecture} for any bounded domain $\Omega$ with $C^1$ boundary in a complete $n$-dimensional smooth Riemannian manifold with ${\rm Ric}\geq -(n-1)\kappa^2$, $\kappa\geq 0$.\\
\vskip .2pt

\thanks{The authors are thankful to Davide Bianchi and Alberto G. Setti for fruitful discussion on the construction of Laplacian cut-off functions on Riemannian manifolds. The first author is grateful to the Dipartimento di Matematica ``Tullio Levi-Civita'' of the University of Padova for hospitality
that supported this collaboration. The second author acknowledges hospitality of the Institut de Math\'emathiques of the University of Neuch\^atel, where this collaboration began. The second author is member of the Gruppo Nazionale per l'Analisi Matematica, la Probabilit\`a e le loro Applicazioni (GNAMPA) of the Istituto Nazionale di Alta Matematica (INdAM).}


\bibliography{bibliography}{}
\bibliographystyle{abbrv}


\end{document}